\documentclass[10pt]{amsart}
\calclayout
\usepackage{amsmath,amsthm,amssymb,latexsym,ifthen,graphicx}
\usepackage{mathrsfs}
\usepackage[most]{tcolorbox}
\usepackage{color}
\usepackage[colorlinks,citecolor=black,urlcolor=black, linkcolor=black]{hyperref}

\usepackage[T1]{fontenc}
\usepackage[utf8]{inputenc}
\usepackage{lmodern}
\usepackage{microtype}
\usepackage{enumitem}
\usepackage{needspace}
\usepackage{xcolor}
\usepackage{tikz}
\usetikzlibrary{arrows.meta}
\usepackage{aliascnt}

\newtheorem{theorem}{Theorem}[section]
\newaliascnt{lemma}{theorem}
\newtheorem{lemma}[lemma]{Lemma}
\aliascntresetthe{lemma}
\newaliascnt{proposition}{theorem}

\aliascntresetthe{proposition}
\newaliascnt{corollary}{theorem}
\newtheorem{corollary}[corollary]{Corollary}
\aliascntresetthe{corollary}
\newaliascnt{claim}{theorem}

\aliascntresetthe{claim}
\newaliascnt{fact}{theorem}
\newtheorem{fact}[fact]{Fact}
\aliascntresetthe{fact}
\newtheorem*{historicalconjecture}{Conjecture}
\theoremstyle{definition}
\newaliascnt{definition}{theorem}
\newtheorem{definition}[definition]{Definition}
\aliascntresetthe{definition}
\newaliascnt{remark}{theorem}

\aliascntresetthe{remark}
\newaliascnt{question}{theorem}
\newtheorem{question}[question]{Question}
\aliascntresetthe{question}

\usepackage[nameinlink,noabbrev]{cleveref}
\crefname{theorem}{theorem}{theorems}
\Crefname{theorem}{Theorem}{Theorems}
\crefname{lemma}{lemma}{lemmas}
\Crefname{lemma}{Lemma}{Lemmas}
\crefname{proposition}{proposition}{propositions}
\Crefname{proposition}{Proposition}{Propositions}
\crefname{corollary}{corollary}{corollaries}
\Crefname{corollary}{Corollary}{Corollaries}
\crefname{claim}{claim}{claims}
\Crefname{claim}{Claim}{Claims}
\crefname{fact}{fact}{facts}
\Crefname{fact}{Fact}{Facts}
\crefname{definition}{definition}{definitions}
\Crefname{definition}{Definition}{Definitions}
\crefname{remark}{remark}{remarks}
\Crefname{remark}{Remark}{Remarks}
\crefname{question}{question}{questions}
\Crefname{question}{Question}{Questions}

\newcommand{\F}{\mathbb F}
\newcommand{\R}{\mathbb R}
\newcommand{\C}{\mathbb C}
\newcommand{\N}{\mathbb N}
\newcommand{\Q}{\mathbb Q}

\newcommand{\eps}{\varepsilon}
\newcommand{\dd}{\,d}
\newcommand{\restr}{\mathord{\upharpoonright}}
\newcommand{\Age}{\operatorname{Age}}
\newcommand{\Emb}{\operatorname{Emb}}
\newcommand{\Iso}{\operatorname{Iso}}

\newcommand{\ldis}{\operatorname{ldis}}
\newcommand{\distBM}{d_{\mathrm{BM}}}
\newcommand{\id}{\operatorname{Id}}

\title[Computable Presentations of Separable Lebesgue Spaces]
{Computable Presentations of Separable Lebesgue Spaces}
\author{Ruofei Xie}
\author{Xuanheng Zhao}
\address{School of Mathematics, Nanjing University,
Nanjing, Jiangsu 210093, People's Republic of China}
\email{xie.ruofei@nju.edu.cn}
\email{xuanheng21@gmail.com}

\subjclass[2020]{Primary 03D78; Secondary 03D45, 46B04, 46E30}
\keywords{Computable presentations, 
Lebesgue spaces,
approximate ultrahomogeneity}

\begin{document}

\begin{abstract}
For every right-c.e.\ real $p\in(1,2)$, $L^p[0,1]$ has a computable presentation.
The proof combines an effective stability modulus for almost isometric
embeddings with a Fra\"{\i}ss\'e-style construction in which the exponent varies.
In the same range,  if
$\ell^p$ has a computable presentation, then $p$ is computable.  Combining these with the known results gives the 
computable presentability classification for all separable Lebesgue spaces.
We also formulate an effective Fra\"{\i}ss\'e theorem for Banach spaces and
apply it to obtain a computable presentation of the Gurarij Banach space.
\end{abstract}

\maketitle

\section{Introduction}

The effective study of continuous mathematics begins with a
deceptively simple question: what should it mean to compute a real
number?  In his landmark paper \cite{Turing}, Turing gave
this question a precise answer by replacing a completed infinite
expansion with a finite mechanical procedure that produces its digits
one by one.  Although he used this definition to address the
Entscheidungsproblem, the conceptual move reached far beyond its
original application.  It supplied a lasting paradigm for computable analysis: an infinite
object becomes accessible through a \emph{uniform} process that generates
finite approximations.  The same shift of viewpoint---from an object
in the abstract to the information by which it is presented---also
lies at the heart of computable structure theory.  The present paper
follows this line of thought for $L^p$-spaces.

The distinction between an abstract mathematical object and a particular presentation of it is largely suppressed in classical mathematics.
Isomorphic groups, isometric metric spaces, and linearly isometric Banach spaces
are regarded as the same object, and a theorem is normally stated
about the abstract object rather than about a particular choice of
coordinates.  Computation cannot ignore this distinction.  An
algorithm never receives an abstract isomorphism type, and an infinite
object cannot be handed to it all at once.  What the algorithm receives
is a concrete \emph{presentation}: a system of names together with
procedures for evaluating the relevant operations, relations, or
distances.  
Computable structure theory studies mathematics
through such presentations.  It asks for the effective content of
classical results: which infinite structures admit algorithmic
descriptions, which invariants and constructions can be recovered from
those descriptions, whether classically equivalent descriptions can
be translated into one another effectively, and how much additional
computational information is required when they cannot.  In this way
the subject connects the computability theory of countable algebraic
structures with computable analysis and the study of separable
uncountable structures, see
\cite{DM}
and \cite{FGM}.

For a countable structure, a computable presentation gives an
isomorphic copy on a computable subset of $\N$ with computable basic
operations and relations. 
The passage to analysis creates an immediate obstacle.  A separable
Banach space is normally uncountable, so its vectors cannot be listed
by natural numbers.  Separability nevertheless supplies a countable
interface: if $(v_n)_{n\in\N}$ has dense linear span, then its rational
linear combinations form a countable skeleton whose completion is the
whole space.  Computability requires effective access to the norm on
this skeleton; arbitrary vectors and operators are then handled by
Cauchy names. 

This framework gives rise to three closely related problems.  The
\emph{presentability problem} asks which classical structures admit
at least one computable presentation.  The \emph{effective uniqueness
problem} asks whether all computable presentations of a fixed
isomorphism or isometry type are computably equivalent.  The
\emph{recognition problem} ranges over presentations and asks how
difficult it is to determine whether the presented object belongs to a
specified class or has a specified type.
Separable Lebesgue spaces provide a particularly revealing test case.  Their
classical linear-isometric structure is governed by the exponent and
the underlying measure algebra, but an arbitrary computable Banach
presentation need not display either of these objects.  It presents
only a dense sequence and the norm geometry generated by that
sequence.  The exponent must therefore be detected indirectly, if it
can be detected at all, through geometric identities, embeddings, and
finite-dimensional configurations.  Conversely, to build a
computable presentation one must make all norm computations effective
even when the exponent itself is not computable. 
 
 The history leading
to our work passes naturally through the three problems above.
We begin with effective uniqueness.  A computably (presentable) Banach space is \emph{computably categorical} if
any two computable Banach presentations of spaces linearly isometric
to it are connected by a surjective linear isometry that is computable
with respect to their induced Cauchy representations. Pour-El and Richards \cite{PER} proved that every separable Hilbert space is computably categorical.
The first examples beyond Hilbert spaces arose in finite dimensions.
Let $(\Omega,\Sigma,\mu)$ be a measure space and
$1\le r<\infty$.  The \emph{Lebesgue space}
$L^r(\Omega,\Sigma,\mu;\F)$ consists of equivalence classes modulo
almost-everywhere equality of measurable $f:\Omega\to\F$ for which
$\|f\|_r=\left(\int_\Omega |f|^r\dd\mu\right)^{1/r}<\infty$.
We abbreviate $L^r([0,1],\lambda;\F)$ to $L^r[0,1]$.  Also,
\[
 \ell_r^N=\left(\F^N,
 \|(a_j)_{j<N}\|_r=\left(\sum_{j<N}|a_j|^r\right)^{1/r}\right).
\]
Informally, a real number is \emph{computable} if a single algorithm
can approximate it to any prescribed accuracy.  Thus, when
$p$ is computable, the exponent can be used effectively in norm
calculations.  McNicholl \cite{McNichollCopies} established computable categoricity for $\ell_p^N$ where $p \ge 1$ is computable.

Clanin, McNicholl, and Stull subsequently obtained an
infinite-dimensional counterpart.  For computable $p$, the space
$L^p[0,1]$ has a standard computable
presentation given by the step functions with coefficients in
$\Q$ in the real case and in $\Q+i\Q$ in the complex case, and with
rational interval endpoints.

\begin{theorem}[Clanin--McNicholl--Stull \cite{CMS}]\label{thm:nonatomic-categoricity}
Let $\F$ be either $\R$ or $\C$, and let $p\ge 1$ be a computable
real.  If $\Omega$ is an atomless, separable measure space such that
$L^p(\Omega;\F)\ne\{0\}$, then every computable presentation of
$L^p(\Omega;\F)$ is
computably linearly isometric to the standard computable presentation
of $L^p([0,1];\F)$.  Consequently, $L^p([0,1];\F)$ is computably
categorical for every computable real $p\ge 1$.
\end{theorem}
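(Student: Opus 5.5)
The plan follows the disintegration strategy: from any computable presentation we computably recover a rich enough Boolean algebra of ``disjointly supported'' vectors, and then transfer it onto the standard dyadic step functions. Two preliminary remarks settle the shape of the argument. First, by Carathéodory's theorem, an atomless separable measure space with $L^p(\Omega;\F)\neq\{0\}$ has $L^p(\Omega)$ linearly isometric to $L^p[0,1]$; this uses the rescaling $f\mapsto w^{1/p}f$, so we may take $\mu$ finite. Hence it suffices to show that every computable presentation $X^\#$ of $L^p[0,1]$ is computably linearly isometric to the standard one. Second, for $p=2$ this is Pour-El--Richards, since Gram--Schmidt is effective. So assume $p\neq 2$.

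The key tool is the Lamperti/Banach characterization of disjointness. For $p\neq 2$, $f\perp g$ (disjoint supports) if and only if
\[
\|f+g\|_p^p+\|f-g\|_p^p=2\|f\|_p^p+2\|g\|_p^p .
\]
The set of pairs satisfying this identity is $\Pi^0_1$ relative to the presentation, because $p$ is computable. Disjointness is therefore only co-c.e. The main steps are as follows.

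\begin{enumerate}[label=(\arabic*)]
\item \emph{Reduce to a disintegration.} Following McNicholl's notion, a \emph{disintegration} is a map $\phi$ from finite binary strings to vectors with these properties: $\phi(\sigma 0)\perp\phi(\sigma 1)$; $\phi(\sigma)=\phi(\sigma 0)+\phi(\sigma 1)$; the span of the range is dense; and, in the atomless case, $\max_{|\sigma|=n}\|\phi(\sigma)\|\to 0$ effectively. Given a computable disintegration, define $T$ on the range by sending $\phi(\sigma)$ to $c_\sigma\chi_{I_\sigma}$. Here $I_\sigma$ is an interval of length $\|\phi(\sigma)\|_p^p/\|\phi(\emptyset)\|_p^p$, placed in dyadic order, and $c_\sigma$ is the constant chosen to match norms. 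Disjoint additivity makes $T$ isometric on the span. Because the lengths are computable reals, $T$ extends to a computable isometry onto $L^p[0,1]$; it is surjective because the intervals shrink and so generate all step functions.
\item \emph{Compute a disintegration from a dense sequence.} This is where the work lies. We want to split each $\phi(\sigma)$ into two disjoint pieces that approximate the ``cut'' of $\phi(\sigma)$ by a given target vector $v_n$ from the presentation's dense sequence. Classically we would take $\phi(\sigma)\chi_{\{|v_n|>t\}}$, but this is not definable from the norm directly. The plan is to search for vectors $g,h$ that satisfy several conditions to within $2^{-k}$: $g+h\approx\phi(\sigma)$, the disjointness identity holds for $g$ and $h$, and both approximately respect the earlier splittings. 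Then we use a stability lemma. For $p\neq2$ there is a computable modulus $\delta(\epsilon)$ such that near-equality in the disjointness identity implies that $f,g$ are $\epsilon$-close to a genuinely disjoint pair, via the strict inequality in Clarkson-type estimates. With this lemma we pass to a limit, obtaining exact disjoint pieces as limits of Cauchy sequences.
\item \emph{Ensure density and atomlessness.} We interleave requirements so that every $v_n$ is eventually approximated within $2^{-n}$ by a combination of $\phi(\sigma)$'s. We also split large pieces: because $\Omega$ is atomless, some splitting into two pieces of comparable norm always exists, and the search will find one.
\end{enumerate}

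The main obstacle is step (2), making the splitting effective. Exact disjointness can be refuted but not confirmed, so we must work with near-disjoint witnesses and correct them in the limit. This requires the quantitative stability estimate that almost-equality in the $p$-parallelogram identity forces nearness to a disjoint pair, with a modulus computable from $p$. It also requires checking that these corrections preserve the tree relations $\phi(\sigma)=\phi(\sigma0)+\phi(\sigma1)$ exactly. The approach I would try first is to build approximate disintegrations at finite stages with error bounded by $2^{-s}$, and then prove that the stage-$s$ maps converge uniformly on each string. The limit is then a computable exact disintegration, and step (1) finishes the proof.
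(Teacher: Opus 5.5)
The paper does not prove this statement; it quotes it from Clanin--McNicholl--Stull, whose proof likewise goes through computable disintegrations. Your reductions are sound: Carath\'eodory's theorem, Pour-El--Richards for $p=2$, and the $\Pi^0_1$ disjointness identity, which holds for every $p\ne2$ including $p=1$. Your step~(1) is also sound, and the shrinking of $\max_{|\sigma|=n}\|\phi(\sigma)\|$ follows automatically from density of the span together with atomlessness. The gap is that steps~(2)--(3), namely that every computable presentation has a computable disintegration, are the whole content of the theorem, and you only describe what you would try. Three things are missing.

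First, the stability lemma you quote (a nearly disjoint pair is close to \emph{some} disjoint pair) does not preserve $\phi(\sigma)=\phi(\sigma0)+\phi(\sigma1)$. You need the relative form. Let $f=g+h$ and $\delta=\bigl|2\|g\|_p^p+2\|h\|_p^p-\|g+h\|_p^p-\|g-h\|_p^p\bigr|$. Then for $A=\{|g|\ge|h|\}$ the exact subvector $f\mathbf1_A$ of $f$ satisfies $\|f\mathbf1_A-g\|_p^p=\int\min(|g|,|h|)^p\le\delta/c_p$. This follows from the pointwise bound $\bigl|2(|a|^p+|b|^p)-|a+b|^p-|a-b|^p\bigr|\ge c_p\min(|a|,|b|)^p$, where $c_p>0$ can be computed from $p$. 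Applied top-down, this shows that an approximate finite tree lies within a computable radius, depending on depth and defect, of an exact tree. That invariant is what makes your searches terminate and lets successive stages stay summably close.

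Second, your density bookkeeping is not protected. When you arrange $v_n\approx\sum_\sigma a_\sigma\phi_s(\sigma)$, the coefficients can be huge because nodes can have tiny norm. Later corrections of size $2^{-s}$ can then destroy the approximation, so every later tolerance must be chosen after, and in terms of, $\sum_\sigma|a_\sigma|$.

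Third, you never say what $\phi(\emptyset)$ is. Every $\phi(\sigma)$ is a subvector of $\phi(\emptyset)$, so density forces $\phi(\emptyset)$ to have full support. You have no effective test for that, so the root cannot simply be chosen at the start. Nor can you enlarge a fixed root by adjoining an exactly disjoint piece, because near-disjointness from a \emph{fixed} vector is unstable. For $u=\mathbf1_B$ and $R=\delta\mathbf1_B$ the defect tends to $0$ with $\delta$, yet every vector disjoint from $R$ is at distance at least $\lambda(B)^{1/p}$ from $u$. The root therefore has to be one of the quantities that move in your limit process, with its support enlarged by small perturbations of the whole approximate tree.

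Until these points are supplied, the sentence ``the limit is then a computable exact disintegration'' restates the theorem rather than following from your plan.
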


 We next turn to the recognition problem: how difficult is it to
recognize a Lebesgue space from an index? Here a \emph{Lebesgue space} is a separable
Banach space linearly isometric to $L^p(\Omega)$ for some measure space
$\Omega$ and some exponent $p$.
A computable Banach-space presentation may be viewed as
a program that, given a rational linear combination of its
distinguished dense sequence and a requested precision, computes the
norm of that combination.  An \emph{index} is simply a natural-number
code for such a program.  Neither the presentation nor its index is
unique.  Given a class of Banach spaces, its \emph{index
set} is the collection of all codes whose programs present members of
that class.  The position of this set in the arithmetical or analytical
hierarchy measures how difficult it is to recognize the class from an
effective description.
  Naively, deciding whether an indexed
Banach space is a Lebesgue space appears to require a search through
the uncountably many possible surjective linear isometries and hence to have the crude
analytic upper bound $\Sigma^1_1$.

\begin{theorem}[Brown--McNicholl--Melnikov \cite{BMM}]\label{thm:lebesgue-index-sets}
The following statements hold.
\begin{enumerate}[label=(\roman*)]
\item The set of all indices of Lebesgue-space presentations, with the
exponent allowed to vary, belongs to $\Pi^0_3$.
\item For every fixed computable real $p \ge 1$, the set of all indices
of $L^p$-space presentations is $\Pi^0_2$-complete.
\item For every fixed computable real $p\ge 1$, the set of all indices
of presentations linearly isometric to $L^p[0,1]$ is
$\Pi^0_2$-complete.
\end{enumerate}
\end{theorem}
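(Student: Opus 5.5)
The plan is to prove all three parts from one characterization: a Banach space is an $L^p$-space exactly when it is locally almost $\ell_p^n$. I will use the isometric Lindenstrauss--Pe{\l}czy\'nski characterization. A separable Banach space $X$ is linearly isometric to some $L^p(\Omega)$ if and only if the following holds: for every finite set $v_1,\dots,v_k\in X$ and every $\eps>0$ there are $n$ and vectors $g_1,\dots,g_n$ such that
\[
(1-\eps)\Big(\sum|a_j|^p\Big)^{1/p}\le\Big\|\sum a_jg_j\Big\|\le(1+\eps)\Big(\sum|a_j|^p\Big)^{1/p}
\]
for all scalars $a_j$, and each $v_i$ lies within $\eps$ of $\operatorname{span}\{g_j\}$. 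For $p\neq 2$ the $g_j$ may be taken almost disjointly supported. For $p=2$ the same condition says $X$ is a Hilbert space, which is consistent.

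\emph{Part (ii).} Fix a computable $p$ and a presentation with index $e$. It suffices to quantify over the rational vectors of the presentation, i.e.\ the rational combinations of the dense sequence, and over rational $\eps$, since the condition is stable under small perturbations. The witnesses $g_j$ can also be taken among rational vectors. The two-sided inequality is a statement about all $a$ in the compact unit sphere of $\ell_p^n$. By homogeneity and a finite rational $\eps$-net, together with the triangle inequality, I replace it by finitely many strict rational inequalities. These can be verified at finite precision because $p$ is computable and the norm is computable from $e$. The same applies to the distance conditions $\|v_i-\sum b_jg_j\|<\eps$ with rational $b_j$. So the condition has the form $\forall(\vec v,\eps)\,\exists(\vec g,\vec b)\,[\text{c.e.\ strict inequalities}]$, which is $\Pi^0_2$.

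\emph{Part (iii).} By Carath\'eodory's classification of separable measure algebras, a separable $L^p(\Omega)\ne\{0\}$ with atomless $\Omega$ is isometric to $L^p[0,1]$; this includes the $\sigma$-finite case such as $L^p(\R)$. I will express atomlessness inside the $L^p$ world. For $p\ne2$: for every rational $v$ and $\eps$ there are rational $u,w$ with $\|v-u-w\|<\eps$, $\|u\|^p,\|w\|^p$ within $\eps$ of $\|v\|^p/2$, and $\|u\pm w\|^p$ within $\eps$ of $\|u\|^p+\|w\|^p$. This last condition characterizes near-disjointness in $L^p$ when $p\neq2$, and it forbids atoms in the limit. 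For $p=2$ one instead asks for infinite dimension, which is again $\Pi^0_2$ (for every $n$ there exist $n$ almost orthonormal rational vectors). Intersecting with the set from (ii) keeps the bound at $\Pi^0_2$.

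\emph{Hardness for (ii) and (iii).} I reduce $\mathrm{Inf}=\{e:W_e\text{ infinite}\}$ to both sets. Uniformly in $e$, I build a presentation whose dense sequence starts as the standard step functions on $[0,1]$. At stage $s$ I introduce a fresh generator $x_s$ whose norm values are committed only to finite precision. If $W_e$ receives a new element, I make the pending generators actual step functions of finer level. Otherwise I steer them toward a fixed non-$L^p$ configuration, for instance a vector $y$ such that $\operatorname{span}\{\mathbf 1,y\}$ is a two-dimensional space with $\ell_\infty^2$-type norm, which is not isometric to any subspace of $L^p$ when $p\ne2$ (for $p=2$, a non-Hilbert 2-dimensional norm). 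If $W_e$ is infinite, every generator is eventually absorbed into $L^p[0,1]$, and the completion is $L^p[0,1]$. If $W_e$ is finite, the last generator converges to the bad configuration, so the space is not even an $L^p$-space. This single reduction gives completeness for both (ii) and (iii).

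\emph{Part (i).} When $p$ varies, I additionally quantify over rational intervals. The index is in the set iff for every $m$ there is a rational interval $I$ of width $<1/m$ such that, for every $(\vec v,\eps)$, the condition of (ii) holds for some $p\in I$. The inner "some $p\in I$" is handled by a rational $p$ together with the uniform continuity of $p\mapsto\|a\|_p$ on compact sets. The result has the shape $\forall\exists\forall\exists$, but the innermost $\exists$ merges with the c.e.\ matrix, giving $\Pi^0_3$. The point to check is that nested intervals that work at every $m$ yield a single limiting $p$ for which the condition holds exactly. This needs compactness in $p$, plus the fact that the exponent is determined by the geometry (for instance via the behavior of $\|u+w\|$ on disjoint pairs).

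\emph{Main obstacle.} I expect the hardness construction to be the hardest step: I must keep the coded norm a genuine norm at every finite stage while leaving room for both outcomes. I would first try making the pending generators convex combinations $x_s=(1-t)f+t\,y$, with $t$ pushed toward $0$ on $W_e$-expansions and held otherwise, and check the triangle inequality stage by stage.
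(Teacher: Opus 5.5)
The paper does not prove this theorem; it quotes it from \cite{BMM}, so I assess your outline on its own. Your upper bounds in (ii) and (iii), via the isometric Lindenstrau\ss--Pe{\l}czy\'nski characterization, are sound once you conjoin the $\Pi^0_2$ clause that $e$ actually is a presentation (the program is total and its outputs are consistent approximations to a norm). By the paper's definition, an index set contains only codes whose programs present a member of the class. The genuine gap is the hardness argument.

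\textbf{Your hardness reduction cannot work as described.} At every stage, the commitments made about the pending generators must still be realizable inside $L^p[0,1]$, because $W_e$ may grow at the next stage and you then have to turn them into step functions. On the other hand, computability of the norm forces the precision of these commitments to tend to $0$ at computable stages, whatever $W_e$ does. Hence, when $W_e$ is finite, every finite-dimensional space spanned by the unresolved generators and finitely many step functions is a Banach--Mazur limit of subspaces of $L^p[0,1]$. It therefore lies in $\Age(L^p)$, since the fixed-dimensional age is closed under such limits. So your target $\operatorname{span}\{\mathbf 1,y\}$ is never reached.

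For $p=2$ the obstruction is absolute. Among total presentations, being a Hilbert space (equivalently, an $L^2$-space) is $\Pi^0_1$, by the parallelogram law on rational vectors. So no computable $f$ that always outputs genuine presentations can satisfy $e\in\mathrm{Inf}\iff f(e)$ presents an $L^2$-space. Separately, $\ell_\infty^2$ is no obstruction for real $p=1$, since every two-dimensional real normed space embeds isometrically into $L^1$.

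\textbf{The missing idea is that hardness comes from validity itself.} Given $x$, let $f(x)$ run the standard presentation of $L^p[0,1]$ (available because $p$ is computable), but answer a query at precision $2^{-n}$ only after $|W_x|\geq n$. Then $f(x)$ presents $L^p[0,1]$ if $W_x$ is infinite, and presents nothing otherwise. This single reduction handles (ii) and (iii) for every computable $p$, including $p=2$. A reduction that stays within total presentations is conceivable only for $p\neq2$. It would need an infinite-dimensional failure of the $\mathcal L_{p,1+}$ condition while all finite-dimensional subspaces stay in $\Age(L^p)$, and your sketch supplies nothing of that kind.

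\textbf{For (i), the step you flag does need an argument.} The relevant ingredient is not that the exponent is determined by the geometry. For a fixed interval $I$, the witnesses $p'$ depend on $(\vec v,\eps)$, and the dimension $n$ of the $\ell_{p'}^n$-block is unbounded. Comparing $\|\cdot\|_{p'}$ with $\|\cdot\|_{p^*}$ on $\F^n$ costs a factor $n^{|1/p'-1/p^*|}$, so $p'\to p^*$ alone proves nothing.

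The step closes once you regroup the almost disjoint $g_j$ into blocks on which the coefficient ratios of the approximants of the $v_i$ are nearly constant. This gives witnesses with $n$ bounded in terms of $k$, $\eps$ and $\max_i\|v_i\|$ only, independently of $p'$. Compactness of $I$ and the directedness of the conditions in $(\vec v,\eps)$ then give a single $p^*\in I$ for which the space is $\mathcal L_{p^*,1+}$. In particular, one interval already suffices, and the quantifier over $m$ is unnecessary.
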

 
A one-sided recovery theorem for the uncomputable exponents 
is also the starting point for our work.

\begin{theorem}[\cite{BMM}]\label{thm:one-sided-exponent-recovery}
Let $B$ be a $L^p$-space ($1 \le p \le 2$) of dimension at least two as a linear space.  Uniformly from a presentation of $B$,
one can computably enumerate $\{r\in\Q:p<r\}$.
\end{theorem}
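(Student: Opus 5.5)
The plan is to find a finite geometric configuration whose strict existence in $B$ is a $\Sigma^0_1$ condition on the presentation, and which occurs exactly when $r>p$. The natural candidate compares the norms of $u\pm v$ with those of $u$ and $v$. For $r\in\Q$ with $r\ge 1$, let $\Phi_r(u,v)$ be the condition
\[
\min\bigl(\|u+v\|,\|u-v\|\bigr)^r>\|u\|^r+\|v\|^r .
\]
I will show that $p<r$ holds if and only if some pair $u,v$ of rational linear combinations of the distinguished dense sequence satisfies $\Phi_r(u,v)$. Rationals $r<1$ can be ignored: they never exceed $p$, and the enumeration simply never lists them.

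First, suppose $r>p$. Since $B\cong L^p(\Omega)$ has linear dimension at least two, it contains $\ell_p^2$ isometrically. Indeed, take a non-zero $h$ whose support can be split into two sets $A_0,A_1$ on which $h$ has positive norm; such an $h$ exists because $B$ is not one-dimensional. Normalizing $h\chi_{A_0}$ and $h\chi_{A_1}$ gives disjointly supported unit vectors $f,g$ with
\[
\|f\pm g\|=2^{1/p}.
\]
Then $\min(\|f+g\|,\|f-g\|)^r=2^{r/p}>2=\|f\|^r+\|g\|^r$, because $r/p>1$. This inequality is strict and both sides are continuous in $(u,v)$, so it persists for all $(u,v)$ near $(f,g)$. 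In particular it holds for suitable rational combinations of the dense sequence, since their span is dense.

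Conversely, suppose $r\le p$. For $1\le p\le 2$ every $L^p$ space satisfies the Clarkson-type inequality
\[
\|u+v\|^p+\|u-v\|^p\le 2\bigl(\|u\|^p+\|v\|^p\bigr).
\]
This is the key analytic input. It follows pointwise from the scalar inequality $|a+b|^p+|a-b|^p\le 2(|a|^p+|b|^p)$ for $p\le 2$, valid over $\R$ and $\C$, by integrating. Consequently
\[
\min\bigl(\|u\pm v\|\bigr)\le\bigl(\|u\|^p+\|v\|^p\bigr)^{1/p}\le\bigl(\|u\|^r+\|v\|^r\bigr)^{1/r},
\]
where the last step is monotonicity of $\ell_s^2$ norms in $s$. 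So $\Phi_r$ fails for every pair.

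Finally, the algorithm: search through all $r\in\Q$ with $r\ge 1$ and all pairs of rational combinations $u,v$, computing the relevant norms to increasing precision from the presentation. Output $r$ once the strict inequality $\Phi_r(u,v)$ is certified; a strict inequality between computable reals is semidecidable, and $x\mapsto x^r$ is computable for rational $r$. The procedure is uniform in the index of the presentation. I expect the main point needing care to be the converse direction, namely checking that the scalar inequality holds for $1\le p\le 2$ in the complex case. It can be reduced to the real two-point inequality via the parallelogram law for $p=2$ together with convexity and interpolation, or quoted as a standard Clarkson inequality.
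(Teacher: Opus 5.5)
The paper does not prove this statement. It is quoted from \cite{BMM} and used only for the necessity half of case (ii) in \Cref{thm:complete-separable-classification}, so there is no in-paper proof to compare against.

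Your argument is correct and works as a self-contained proof:
\begin{itemize}
\item For $r>p$, a disjointly supported pair of unit vectors gives $\min\|f\pm g\|^r=2^{r/p}>2=\|f\|^r+\|g\|^r$. This is an open condition, so pairs of rational combinations near $(f,g)$ also satisfy it.
\item For $1\le r\le p$, Clarkson's inequality gives $\min\|u\pm v\|\le(\|u\|^p+\|v\|^p)^{1/p}$. Monotonicity of $\ell_s^2$-norms then excludes every witness.
\item The witness condition is a strict inequality between uniformly computable reals, hence semidecidable uniformly in the presentation.
\end{itemize}

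The step you flag as delicate is exactly \Cref{lem:lp-clarkson}. Its proof settles the scalar inequality uniformly over $\R$ and $\C$ without any interpolation:
\[
|\alpha+\beta|^p+|\alpha-\beta|^p\le2\Bigl(\tfrac12\bigl(|\alpha+\beta|^2+|\alpha-\beta|^2\bigr)\Bigr)^{p/2}=2\bigl(|\alpha|^2+|\beta|^2\bigr)^{p/2}\le2\bigl(|\alpha|^p+|\beta|^p\bigr).
\]
The three steps are concavity of $t\mapsto t^{p/2}$, the scalar parallelogram identity, and subadditivity of $t\mapsto t^{p/2}$.

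One sentence deserves an extra line: your claim that a splittable support exists ``because $B$ is not one-dimensional.'' Take linearly independent $f_1,f_2$.
\begin{itemize}
\item If either support splits into two pieces carrying positive $L^p$-mass, restrict that function to the two pieces.
\item Otherwise both supports are atoms. They have finite measure, since supports of $L^p$-functions are $\sigma$-finite, and $f_1,f_2$ are a.e.\ constant on them. The atoms must then be a.e.\ disjoint, or $f_1,f_2$ would be proportional. So $f_1,f_2$ themselves are the disjoint pair.
\end{itemize}

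It is worth seeing how your argument relates to \Cref{sec:exponent-classification}. You use Clarkson's inequality at first order: a strict violation of the $r$-power inequality is witnessed inside $\ell_p^2$, which can only certify upper bounds for $p$. \Cref{lem:lp-midpoint-defects} feeds the same inequality into the second differences $\Delta_h\varphi$. This yields the two-sided rate $2^{-np}$ and hence $p$ itself, but the lower bound there needs an atomic coordinate. By \Cref{thm:main}, no procedure can enumerate $\{r\in\Q:r<p\}$ in the atomless case. So the one-sided conclusion your argument gives is the best possible for general $L^p$-spaces with $1\le p\le2$.
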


A real number $x$ is \emph{right-c.e.\ }if $\{r\in\Q:x<r\}$
is c.e., and it is \emph{left-c.e.\ }if
$\{r\in\Q:r<x\}$ is c.e.  It is \emph{computable}
if it is both left-c.e.\ and right-c.e. 
Consequently, if $1<p<2$ and an $L^p$-space of dimension at least
two is computably presentable, then $p$ must be right-c.e. The restriction on the dimension is necessary, since every
one-dimensional $L^p$-space is linearly isometric to the scalar field
and therefore carries no information about $p$.

\begin{theorem}[McNicholl \cite{McNichollExponent}]\label{thm:mcnicholl-exponent-recovery}
The following statements hold.
\begin{enumerate}[label=(\roman*)]
\item If $2\leq p<\infty$ and a computably presentable $L^p$-space has
dimension at least two, then $p$ is computable.
\item If $n\geq2$, $1 \le p<\infty$, and $\ell_n^p$ is computably
presentable, then $p$ is computable.
\end{enumerate}
\end{theorem}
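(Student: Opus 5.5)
The plan is to show that $p$ is both left-c.e.\ and right-c.e.\ relative to a computable presentation, using Clarkson-type inequalities together with the extremal behaviour of disjointly supported vectors. Fix a computable presentation of an $L^p$-space $B$ of dimension at least two. For rational linear combinations $u,v$ of the distinguished sequence, all the norms $\|u\|,\|v\|,\|u\pm v\|$ are computable reals, uniformly in the codes of $u,v$. The guiding fact is that for disjointly supported $f,g$ we have $\|f\pm g\|_p^p=\|f\|_p^p+\|g\|_p^p$. For $p\ne 2$, near-equality in the parallelogram-type identity
\[
\|u+v\|^p+\|u-v\|^p=2(\|u\|^p+\|v\|^p)
\]
forces $u,v$ to be nearly disjoint.

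\textbf{Step 1: the case $p\ge 2$, lower approximations.} For $p\ge2$, Clarkson's inequality gives $\|u+v\|^p+\|u-v\|^p\ge 2(\|u\|^p+\|v\|^p)$. Hence for unit vectors with $\|u+v\|=\|u-v\|=a$ we get $a\ge 2^{1/p}$, with equality exactly for disjoint pairs. Such pairs exist because $\dim B\ge2$ (use two disjoint atoms or two disjoint pieces of a non-atomic part). Since rational combinations are dense, this yields
\[
2^{1/p}=\inf\Bigl\{\max(\|u+v\|,\|u-v\|)\big/\min(\|u\|,\|v\|)\Bigr\},
\]
after a small normalisation that handles the unequal-norm error continuously. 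The infimum is over rational pairs and is an infimum of uniformly computable reals, so $2^{1/p}$ is right-c.e. Consequently $p$ is left-c.e.

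\textbf{Step 2: upper approximations.} It remains to enumerate $\{r\in\Q: p<r\}$. I would enumerate $r$ once we find a rational pair $u,v$ and a rational $\delta$ that together certify two things: that $u,v$ are $\delta$-almost disjoint, and that $\|u+v\|^r+\|u-v\|^r<2(\|u\|^r+\|v\|^r)-\eta(\delta)$. The second inequality holds strictly for exactly disjoint unit $u,v$ when $r>p$, since then $2\cdot 2^{r/p}<4$. The difficulty is that almost disjointness must be certified without knowing $p$. My first attempt is to certify it purely metrically through a finite configuration. For instance, require that the four vectors $\pm u\pm v$ have equal norms, and that the two-dimensional span of $u,v$ has, on a fine rational grid, norm values close to those of some $\ell_s^2$ with $s$ in the currently known interval of exponents. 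An effective stability modulus would then show that such a configuration in an $L^p$-space is close to a genuinely disjoint pair, with an explicit error $\eta(\delta)$.

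\textbf{Step 3: part (ii).} For $\ell^p_n$ with $n\ge2$ and arbitrary $p$, the same scheme applies, with one change for $p<2$: the reversed Clarkson inequality turns the infimum in Step 1 into a supremum. By \cref{thm:one-sided-exponent-recovery} one direction is already available. The finite-dimensionality and the known shape of the unit ball, namely the $2^n$ points $\pm e_j$ being the extreme disjoint configuration, make the stability certificate in Step 2 easier. The unit sphere is compact and searchable up to any rational precision, so the minimum over unit pairs can be approximated from both sides.

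The main obstacle is Step 2. Step 1 gives one-sided approximations almost for free, but the opposite side needs a finite, purely norm-computable certificate that a pair is close to disjoint, with a modulus independent of the unknown exponent. I expect this to require a quantitative uniqueness result for equality cases in Clarkson's inequality, uniform for $p$ in compact subintervals of $[2,\infty)$.
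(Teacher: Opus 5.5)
The paper does not prove this statement; it is quoted from McNicholl, so your argument has to stand on its own. Step~1 is correct: for $p\ge 2$ the reverse Clarkson inequality makes $2^{1/p}$ an infimum of uniformly computable reals, so $p$ is left-c.e. Part (ii), and the finite-dimensional case of (i), are also fine, and for a simpler reason than you suggest. After non-uniformly fixing a basis of rational vectors, the unit sphere $S$ of $\ell^p_n$ is computably compact. Hence $\min_{S\times S}\max(\|u+v\|,\|u-v\|)$ (if $p\ge2$), or $\max_{S\times S}\min(\|u+v\|,\|u-v\|)$ (if $p\le 2$), is a computable real equal to $2^{1/p}$ by \Cref{fact:effective-compactness-closure}(ii). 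No near-disjointness certificate is needed there.

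The gap is part (i) for infinite-dimensional spaces with $p>2$. There the right-c.e.\ half is the whole difficulty, and Step~2 does not provide it.

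\textbf{Sign error.} For disjoint unit $u,v$ and $r>p$ one has $\|u+v\|^r+\|u-v\|^r=2\cdot 2^{r/p}>4$. So your displayed test, as written, certifies $r<p$, which Step~1 already gives.

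\textbf{Circularity.} Near-equality in Clarkson's inequality cannot be tested without knowing $p$, so a quantitative uniqueness theorem for its equality case does not help.

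\textbf{The metric certificate needs an unstated theorem.} Your alternative certificate is that $\operatorname{span}\{u,v\}$ is Banach--Mazur close to some $\ell_s^2$. This carries information only through a rigidity theorem of the form ``$\ell_s^2$ is close to $\Age_2(L^{p'})$ only if $s\approx p'$'', uniformly on a compact set of pairs $(p',s)$. As stated, that rigidity fails in two ways.
\begin{itemize}
\item $\ell_2^2$ embeds isometrically into every $L^{p'}[0,1]$ (via Gaussians), so $s$ must be kept away from $2$.
\item As $p'\to\infty$, every two-dimensional space is almost isometric to a subspace of some $\ell_{p'}^N$. So you need an a priori bound $R>p$, whereas the interval of exponents known from Step~1 is unbounded above.
\end{itemize}
Even within a window $2<p_0\le p',s\le R$ fixed non-uniformly, you still need a non-embedding statement: which $\ell_s^2$ are subspaces of $L^{p'}$, i.e.\ positivity of inverse $p'$-cosine transforms, with separate care at even integers. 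That is a substantial result, and you neither state nor prove it. Only with it in hand could a compactness argument like \Cref{lem:effective-inverse} produce a computable modulus, and hence a search that computes $p$.

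So the proposal proves (ii) and the left-c.e.\ half of (i), but not (i) itself.
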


McNicholl also proved that exponent recovery is at least nonuniform.

\begin{theorem}[\cite{McNichollExponent}]\label{thm:mcnicholl-embedding-nonuniformity}
For every nondegenerate open interval $I\subseteq(1,2)$, there
is no partial computable function $F$ such that, whenever $e$ indexes
a computable presentation of an $L^p$-space of dimension at least two
with $p\in I$, $F(e)$ is defined and indexes a computable Cauchy name
for $p$.
\end{theorem}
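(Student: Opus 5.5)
We argue by contradiction with a self-referential construction. Suppose $F$ is as in the statement and fix a rational $p_0\in I$ and a rational $\delta>0$ with $[p_0-4\delta,p_0+4\delta]\subseteq I$. By the recursion theorem we build a program with index $e$ that knows $e$. While it runs it watches the computation of $F(e)$ and of the first terms of the Cauchy name it would output. The aim is to arrange two things. If $F(e)$ never produces a term $q$ with guaranteed accuracy $<\delta$, then $e$ indexes a presentation of $L^{p_0}[0,1]$, so $F$ fails on $e$. If it does produce such a $q$, then $e$ indexes a presentation of $L^{p_1}[0,1]$ for a rational $p_1\in I$ with $|p_1-q|>\delta$, so the name is wrong. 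Either way we contradict the assumption on $F$, and all presented spaces are infinite dimensional, as required.

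The central difficulty is that a presentation is committed, at each stage, to approximate norm values of many rational combinations. Exponent-sensitive configurations would therefore lock in $p_0$: already for $\ell^2_p$ the norms of fixed vectors separate distinct exponents by a definite amount. So the early part of the presentation should be exponent-insensitive, and I would use Gaussians for this. In $L^p[0,1]$, normalized independent standard Gaussians $(\gamma_n)$ span a subspace isometric to $\ell^2$, with $\|\sum a_n\gamma_n\|_p=\|a\|_2$ for every $p$. The plan is to let the distinguished vectors be
\[
v_n=\gamma_n+\eps_n x_n ,
\]
where $x_n$ runs through the standard dense sequence of rational step functions of norm at most one, and $\eps_n$ is a rapidly decreasing rational sequence. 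The program answers the query ``norm of $\sum a_nv_n$ to precision $2^{-k}$'' by simulating the construction for $k$ plus a size parameter of $a$ many steps. The answer uses the current exponent, which is $p_0$ until the switch. The sizes are arranged so that the $\eps_n x_n$ terms with $\eps_n<2^{-k}$ are invisible at precision $2^{-k}$, and the remaining visible terms are genuinely computed in $L^{p_0}$.

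If $F(e)$ produces the small-error approximation $q$ at stage $s$, the program picks a rational $p_1\in I$ with $|p_1-q|>\delta$ and from then on computes all norms in $L^{p_1}$. Consistency with the finitely many precision levels $k<s$ already answered should follow from an estimate of the form
\[
\Bigl|\,\|\textstyle\sum a_nv_n\|_{p_1}-\|\sum a_nv_n\|_{p_0}\Bigr|\;\le\; C\sum_{n\le s}|a_n|\,\eps_n ,
\]
since the Gaussian part contributes identically in both exponents. To ensure this is below the committed tolerances, the early answers are given with half their allowed error. Finally, one checks that the closed span of the $v_n$ is all of $L^{p}[0,1]$ for the final exponent, or is an atomless $L^p$-space, so that \cref{thm:nonatomic-categoricity}-type identification is unnecessary.

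I expect the density step to be the main obstacle. Small perturbations of an $\ell^2$-basis need not have dense span, and after the switch the vectors $x_n$ with $n<s$ belong to a space whose exponent has changed. What I would try first is to interleave the perturbations with a ``back-and-forth'' correction. After stage $s$, new generators are chosen from step functions in $L^{p_1}$ and together with the Gaussians they approximate each $x_n$ arbitrarily well, so the closed span contains the dense set. If that fails, the fallback is to replace the Gaussian block by a finite-dimensional almost-isometric embedding argument: use continuity in $p$ of the stability modulus for almost isometric embeddings to reinterpret the finitely many committed configurations inside $L^{p_1}$ up to the committed error, which is the kind of tool the abstract indicates the paper develops.
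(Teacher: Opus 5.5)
You have the right outer framework: recursion theorem, wait for a term of the name $F(e)$ with accuracy below $\delta$, then change the exponent so that this term is wrong. But neither horn of the dilemma goes through with the Gaussian construction.

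\emph{The no-switch case.} Here $\|v_n-\gamma_n\|_{p_0}\le\eps_n$ with $\eps_n$ rapidly decreasing, and summability already suffices for what follows. So from some index on, $(v_n)$ is a small perturbation of $(\gamma_n)$, which is isometrically equivalent to the unit vector basis of $\ell^2$. By the small-perturbation principle for basic sequences, $\overline{\operatorname{span}}\{v_n\}$ is isomorphic to $\ell^2$. An infinite-dimensional $L^p$-space with $p\ne2$ contains $\ell^p$ isometrically, and $\ell^p$ does not embed isomorphically into a Hilbert space. Hence in this case $e$ does not present an $L^p$-space with $p\in I$ at all, and the hypothesis on $F$ yields no contradiction. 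Your back-and-forth correction acts only after a switch, so it cannot repair this.

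\emph{The switch case.} The bound $\bigl|\|\sum_na_nv_n\|_{p_1}-\|\sum_na_nv_n\|_{p_0}\bigr|\le C\sum_n|a_n|\eps_n$ is a fixed positive number for, e.g., the query about $v_0$ alone. Before stage $s$, however, you have answered that query at precisions down to roughly $2^{-s}$. At those precisions your scheme deliberately makes $\eps_0x_0$ visible and computes it in $L^{p_0}$. Since $s$ is determined by $F$, this estimate cannot protect the committed tolerances. The fallback does not help either. \Cref{thm:uniform-stability} compares embeddings into a single $L^r$. No continuity-in-$p$ estimate can reproduce a fixed configuration to arbitrarily fine precision in an exponent that differs from $p_0$ by about $\delta$.

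What is missing is that the switch should go \emph{down}, where no approximation is needed. By \Cref{thm:stable-variable-embedding} there is $T\in\Emb(L^{p_0}[0,1],L^{p_1}[0,1])$ whenever $1<p_1\le p_0\le2$. So every configuration committed in $L^{p_0}$ is realized exactly in $L^{p_1}$; this one-way inclusion $\Age(L^{p_0})\subseteq\Age(L^{p_1})$ is the same asymmetry that underlies \Cref{thm:one-sided-exponent-recovery}. With this, the Gaussians can be dropped:
\begin{itemize}
\item Let $e$ present the standard rational step functions $s_j$ of $L^{p_0}[0,1]$. Answer each query with half its allowed error, after a simulation time chosen so that each level contains only finitely many queries.
\item If the term $q$ appears at stage $s$ with $|q-p_0|\ge\delta$, never switch. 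Otherwise put $p_1=p_0-3\delta\in I$.
\item Only finitely many answers have been committed, involving say $s_0,\dots,s_N$. The vectors $Ts_0,\dots,Ts_N$ satisfy them with margin. So an effective search finds rational step functions $c_0,\dots,c_N$ in $L^{p_1}[0,1]$ that satisfy them as well.
\item From then on, $e$ presents $c_0,\dots,c_N$ followed by the standard step functions of $L^{p_1}[0,1]$.
\end{itemize}
Density is automatic in both branches. Thus $e$ presents $L^{p_0}[0,1]$ or $L^{p_1}[0,1]$, and in each case $F$ fails on $e$.
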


The remaining case is that of infinite-dimensional separable
$L^p$-spaces with $1<p<2$. The one-sided recovery
theorem makes right-c.e.-ness of $p$ necessary, while the
nonuniformity theorem suggests that computability may not be
necessary.

\begin{historicalconjecture}[McNicholl \cite{McNichollExponent}, 2020]
If $1<p<2$ is right-c.e., then $L^p[0,1]$ is computably presentable.
\end{historicalconjecture}

This conjecture was later recorded as Conjecture~6.33 in the
textbook by Franklin, Goldbring, and McNicholl \cite{FGM}.
It is also connected with the following question in effective
duality, posed by Downey and Melnikov in their textbook
\cite[Question~4.2.116]{DM}; a broader version was also posed by
Downey, Greenberg, and Qian \cite[Question~1]{DGQ}.

\begin{question}[Downey--Melnikov]\label{question:computable-dual}
Suppose $B$ is a computable Banach space and its dual $B^*$ is
separable.  Must $B^*$ have a computable presentation?
\end{question}

Downey and Melnikov observed that a positive solution of McNicholl's
conjecture would give a negative answer to
\Cref{question:computable-dual}: for a noncomputable right-c.e. $p\in(1,2)$, take a
computable copy $B$ of $L^p([0,1];\R)$.  Then $B$ is reflexive and,
by classical duality, $B^*\cong L^q([0,1];\R)$, where
$q=p/(p-1)>2$, so $B^*$ is separable.  If $B^*$ had a computable
presentation, \Cref{thm:mcnicholl-exponent-recovery}(i) would make
$q$, and hence $p=q/(q-1)$, computable, a contradiction. Our main theorem supplies such a
counterexample.

\begin{theorem}\label{thm:main}
Let $\F$ be either $\R$ or $\C$.  If $1<p<2$ is right-c.e., then
$L^p([0,1];\F)$ has a computable presentation.
\end{theorem}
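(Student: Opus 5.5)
We fix a computable decreasing sequence of rationals $p_0>p_1>\dots$ in $(1,2)$ with $\lim_s p_s=p$; it exists because $p$ is right-c.e. The goal is a computable sequence of vectors $(v_n)$ together with an algorithm that approximates $\|\sum_n a_n v_n\|$ for rational $a$. Working in $L^p[0,1]$ directly is impossible, since we cannot compute $p$-norms of step functions. The plan is a Fra\"{\i}ss\'e-style construction in which the exponent varies, as the abstract indicates. At stage $s$ we hold a finite-dimensional space $E_s$, presented as $\ell^{N_s}_{p_s}$, and finitely many generators $v_0,\dots,v_{k_s}$ given as explicit rational vectors in $\ell^{N_s}_{p_s}$. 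Here rational means rational coordinates; in the complex case $\Q+i\Q$ works the same way. Passing to stage $s+1$, we choose an embedding $T_s:\ell^{N_s}_{p_s}\to\ell^{N_{s+1}}_{p_{s+1}}$ that is $(1+\eps_s)$-isometric, where $\sum_s\eps_s<\infty$ is computable. Norms in $\ell_{p_s}^N$ of rational vectors are computable, since $p_s$ is rational, and this makes everything effective. The norm of $\sum a_nv_n$ is then defined as the limit of its stage-$s$ norms. The convergence will have a computable modulus controlled by $\prod_{t\ge s}(1+\eps_t)$, together with a correction that accounts for the change in exponent.

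\textbf{Key steps, in order.} (1) \emph{Exponent change.} Show that $\ell^N_{r}$ for $r$ near $r'$ maps almost isometrically into $\ell^M_{r'}$ when $r'<r$ and $r,r'\in(1,2)$. The tool would be the classical fact that $\ell_r$ embeds isometrically into $L^{r'}$ for $r'<r\le2$, using $r$-stable random variables. Discretising a stable law gives a finite $\ell^M_{r'}$ into which $\ell^N_r$ embeds with distortion $1+\eps$, and the needed $M$ is computable from $N,r,r',\eps$. This is the one place where right-c.e.\ (rather than left-c.e.) is used: we must only ever pass to smaller exponents. (2) \emph{Effective stability modulus.} Prove a computable bound $\delta(N,\eps)$ with the following property: any $(1+\delta)$-isometric embedding of $\ell^N_{r}$ into an $L^{r}$-space is $\eps$-close to an actual isometric embedding. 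This uses the rigidity of isometries of $L^r$ for $r\neq2$ onto disjointly supported functions. It also lets us perturb the images in step (1) so that the approximate structure is coherent across stages. (3) \emph{Richness/Fra\"{\i}ss\'e steps.} Interleave requirements so that the limit is $L^p[0,1]$ and not merely some $L^p$-space. At each stage we split every atom of $E_s$ into two halves, a map that is isometric in the exponent $p_s$ up to the exponent-change correction. This forces the limit to be atomless, and \Cref{thm:nonatomic-categoricity}'s classical part then identifies it with $L^p[0,1]$. We must also ensure that the limit norm is an $L^p$-norm. For this we show that every finite configuration of the limit is a limit of configurations in $\ell_{p_s}$ with distortion $\to1$. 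Since the class of subspaces of $L^p$ is closed under such limits as $p_s\to p$, the completion is an $L^p(\mu)$ space.

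\textbf{Main obstacle.} Step (2), combined with the convergence modulus, is the crux. Errors from the exponent change at stage $s$ depend on $|p_s-p_{s+1}|$, and we have no computable bound on how the tail $p_s-p$ shrinks. We therefore cannot know in advance how accurate stage $s$ norms are. The fix I would try first is to make the error at each transition depend only on quantities known at that stage. We require $T_s$ to be $(1+\eps_s)$-isometric from $\|\cdot\|_{p_s}$ to $\|\cdot\|_{p_{s+1}}$ on $E_s$, which is possible by step (1) for any $p_{s+1}<p_s$ once $M$ is large, because a single stable embedding handles all smaller exponents. The total distortion from stage $s$ onward is then at most $\prod_{t\ge s}(1+\eps_t)$, which is computable, independent of the unknown rate $p_s\to p$. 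Checking that the stable-variable construction gives such uniform, computable $M$ is where I expect the real work to lie.
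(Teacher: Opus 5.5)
There is a genuine gap in Step (3): nothing in your construction forces the completion $B$ to be $L^p[0,1]$, or even an $L^p(\mu)$-space.

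\textbf{Splitting atoms does not survive the exponent changes.} The bonding maps from Step (1) are stable-variable embeddings, and they cannot preserve disjointness. Test a disjointness-preserving map $\ell_{p_s}^{N}\to\ell_{p_{s+1}}^{M}$ on $(1,\ldots,1)$: its logarithmic distortion is at least $\tfrac12(1/p_{s+1}-1/p_s)\log N$. Once $N_s\to\infty$, these distortions have no computable tail bound, since such a bound would compute $p$. This is exactly the obstruction that pushed you to stable embeddings. So the coordinate ``atoms'' of $E_s$ are sent to overlapping vectors at the next stage. The limit $B$ carries no lattice or measure structure in which ``splitting atoms'' or ``atomless'' means anything, and the classical part of \Cref{thm:nonatomic-categoricity} has nothing to act on.

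\textbf{The age argument proves less than you need.} What your limit argument does show is that every finite-dimensional subspace of $B$ lies in $\Age(L^p[0,1])$; this is the paper's \Cref{lem:stage-geometry}(i). Hence $B$ is isometric to a subspace of $L^p[0,1]$. But closure of the class of subspaces of $L^p$ under such limits yields subspaces, not $L^p(\mu)$-spaces. For $1<p<2$, $L^p$ contains isometric copies of $\ell_q$ for every $p<q\le 2$, and of every $\ell_{p_s}^{N}$ you use as a building block.

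\textbf{The missing idea: a Fra\"{\i}ss\'e mechanism.} The paper pins down $L^p[0,1]$ among subspaces of $L^p$ by meeting two kinds of requirements.
\begin{itemize}
\item \emph{Copy requirements:} every member of a Banach--Mazur dense family in $\Age(L^p)$ embeds with vanishing distortion at late stages.
\item \emph{Pair requirements:} for any two sufficiently undistorted $\gamma,\eta$ from a small space into $X_m$, some next-stage map $J$ satisfies $J\gamma\approx I_m\eta$.
\end{itemize}
An approximate intertwining with $L^p[0,1]$, using its approximate ultrahomogeneity (\Cref{thm:lp-auh}), then gives a surjective isometry.

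Because $p$ is not computable, the thresholds in the pair requirements must be computed in advance. They must be valid at every stage exponent, so that the searches for $J$ provably terminate, and also at $p$ in the intertwining. This is what \Cref{thm:uniform-stability} provides: a computable modulus, uniform over a compact exponent interval, such that any two almost isometric embeddings of an arbitrary $E\in\Age_k(L^r)$ are $\eps$-conjugate by some $U\in\Iso(L^r[0,1])$. The paper proves it via Plotkin--Rudin injectivity of the projective cosine transform and computable compactness of $W_1$-spaces of measures.

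\textbf{Why Step (2) is not a substitute.} It covers only domains $\ell_r^N$, whereas after exponent changes the relevant subspaces are arbitrary members of the age. It also gives proximity to some isometric embedding, rather than conjugacy of two given embeddings by an isometry of the whole space, which is what the back-and-forth needs.

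By contrast, the obstacle you singled out---the convergence modulus of the limit norm---is the easy part. Bonding maps of summable log-distortion (the paper uses $2^{-m-4}$) make the limit norm computable at once.
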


The proof combines a locally
uniform effective stability theorem with a varying-exponent
Fra\"{\i}ss\'e-style construction. The purely atomic analogue behaves differently.  We prove the
following complementary result.

\begin{theorem}\label{thm:ellp-recovery}
Let $\F$ be either $\R$ or $\C$, and let $1<p<2$.  If
$\ell^p(\F)$ has a computable presentation, then $p$ is a computable
real.
\end{theorem}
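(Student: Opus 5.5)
The plan is to use \Cref{thm:one-sided-exponent-recovery}. It says that from any presentation of $\ell^p$ we can enumerate $\{r\in\Q: p<r\}$, so $p$ is right-c.e. What remains is to show that $p$ is left-c.e.: we must effectively enumerate $\{r\in\Q: r<p\}$ from a computable presentation of $\ell^p(\F)$. Everything rests on one structural fact: $\ell^p$, unlike $L^p[0,1]$, has atoms. For two disjointly supported normalized atoms $e_0,e_1$, the two-dimensional function
\[
\phi_p(a,b)=\|ae_0+be_1\|=(|a|^p+|b|^p)^{1/p}
\]
determines $p$, and $\phi_p(1,1)=2^{1/p}$ is strictly decreasing in $p$. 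So it suffices to produce, effectively, pairs of vectors that are provably close to a disjoint pair of atoms, and then read off upper bounds for $\|u+v\|$, which give lower bounds for $p$.

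\emph{Step 1 (where geometry alone fails).} Two-dimensional or finite-dimensional norm data cannot certify a lower bound for $p$ by themselves. For $p<q\le 2$, $L^q$ embeds isometrically into $L^p$, and $L^p$ is finitely representable in $\ell^p$. Hence $\ell^q_n$ appears $(1+\eps)$-isometrically inside $\ell^p$, and any search over finite configurations only yields the inequality $q\ge p$, which is the direction we already have. The certificate must therefore use the atomic structure, namely that a fixed vector of the presentation, say $v_0$, has only finitely many essentially nonzero coordinates up to $\eps$.

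\emph{Step 2 (certificate).} For a rational $r<2$, I search for the following data: a finite tuple $w_1,\dots,w_k$ of rational combinations of the dense sequence, a rational $\eps>0$, and a stage $t$ with upper approximation $p_t$ to $p$ from \Cref{thm:one-sided-exponent-recovery}. The tuple should satisfy three conditions. First, $\|v_0-\sum_i w_i\|<\eps$. Second, the span of the $w_i$ is $(1+\eps)$-isometric to $\ell^{p_t}_k$ via $w_i\mapsto \|w_i\|e_i$, checked on a finite rational net. Third, $\|w_1\|$ exceeds a quantity $c(\eps,k,p_t)$ that forces $w_1$ to carry most of the mass of some single coordinate of $v_0$. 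I would then apply the effective stability modulus mentioned in the abstract, which says that almost isometric copies of $\ell^s_k$ in an $L^p$-space are near disjointly supported ones, locally uniformly in the exponent. Together with Clarkson's inequality, this should show that such a certificate places $w_1,w_2$ within $\delta(\eps,k)$ of disjoint vectors each concentrated near one atom. For disjoint vectors near atoms, $\|w_1+w_2\|$ is within $\delta$ of $(\|w_1\|^p+\|w_2\|^p)^{1/p}$, and this yields $p>r$ once the measured value is small enough. Conversely, a true finite-support approximation of $v_0$ by its coordinate blocks eventually appears in the search. So every $r<p$ is enumerated, and the enumeration is sound.

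\emph{Main obstacle.} The hardest step is the third condition of Step 2 together with soundness. I must show that being nearly isometric to $\ell^{p_t}_k$ \emph{and} nearly decomposing a fixed vector cannot be faked by non-atomic ``stable'' configurations of the kind in Step 1. My first attempt would be a compactness argument over the finitely many significant coordinates of $v_0$. The tail of $v_0$ has $\ell^p$-norm below $\eps$, so any $(1+\eps)$-isometric $\ell^s_k$-decomposition of $v_0$ with $s>p$ must spread mass in a way that Clarkson's inequality (the James-constant bound $2^{1/p}$) detects quantitatively. That detection would force $s$ to lie within $\delta(\eps)$ of $p$, which gives the lower bound.
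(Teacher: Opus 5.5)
\textbf{There is a genuine gap in Step~2.} The right-c.e.\ half via \Cref{thm:one-sided-exponent-recovery} is fine. The soundness of the Step~2 certificate, which you flag as the main obstacle, is not just unproved: it fails as formulated.

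The certificate is still an open condition on finitely many norms of rational combinations of $v_0,w_1,\dots,w_k$, so your own Step~1 obstruction applies to it. The result you invoke does not help. \Cref{thm:uniform-stability} says that two almost isometric embeddings of the \emph{same} $E\in\Age_k(L^r[0,1])$ into $L^r[0,1]$ agree up to an element of $\Iso(L^r[0,1])$, uniformly in the ambient exponent $r\in I$. At best this yields near-disjointness for almost isometric copies of $\ell_r^k$ with matching exponent. Your test uses $\ell_{p_t}^k$ with $p_t>p$ an unknown distance apart, and for $s>p$ near copies of $\ell_s^k$ are not near disjoint.

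In fact no margin $\delta(\eps,k)$ can exist. Take $p<s\le2$ and $c^*\in\F^2$ with $\lVert c^*\rVert_s=1$ and both entries nonzero. Let $\xi_1,\xi_2$ be i.i.d.\ $s$-stable and $\zeta=c_1^*\xi_1+c_2^*\xi_2$. The functions $\xi_i/\zeta$ on the atomless probability space $(\Omega,\lvert\zeta\rvert^p\dd P)$ define an isometric embedding $T$ of $\ell_s^2$ into $L^p$ with $Tc^*=\mathbf 1$. Transport this to $[0,1]$ and apply $P_n$ from \Cref{thm:dyadic-approximation}, which fixes $\mathbf 1$. This gives $w_1,w_2\in\ell_p^{2^n}\subseteq\ell^p$ with:
\begin{itemize}
\item $w_1+w_2=2^{-n/p}\sum_{j<2^n}e_j$ exactly;
\item $\lVert w_i\rVert\approx\lvert c_i^*\rvert$;
\item $\operatorname{span}(w_1,w_2)$ is $(1+\eps)$-isometric to $\ell_s^2$ via $w_i\mapsto\lVert w_i\rVert e_i$.
\end{itemize}
A disjoint pair spans a copy of $\ell_p^2$, which is at positive Banach--Mazur distance from $\ell_s^2$, so $w_1,w_2$ are far from disjoint. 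If $v_0$ happens to be such a flat vector, all your conditions pass with $p_t=s$. Reading $\lVert w_1+w_2\rVert=1<(\lVert w_1\rVert^r+\lVert w_2\rVert^r)^{1/r}$ as if $w_1,w_2$ were disjoint then ``certifies'' $p>r$ for $r$ nearly up to $s$, possibly far above $p$.

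Condition (3) cannot repair this, since norm data do not see coordinates, and no threshold $c(\eps,k,p_t)$ is actually specified. Clarkson's inequality points the wrong way. It holds throughout $\ell^p$, hence in these near copies of $\ell_s^k$. Like the James constant $2^{1/p}$, it can only exclude $s<p$, i.e.\ it gives upper bounds on $p$, the direction you already have.

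\textbf{The missing idea is to compute $p$ as a limit rather than certify lower bounds.} The paper does not use \Cref{thm:one-sided-exponent-recovery} here; it computes $p$ as a limit with fixed but unknown constants. It chooses, nonuniformly, linearly independent rational combinations $x,y$ with $y$ having nonzero atomic coordinate $y_0$. It sets $\varphi(\lambda)=\lVert x+\lambda y\rVert$ and considers the uniformly computable numbers $M_n=\max_{\lambda\in K_n}\Delta_{2^{-n}}\varphi(\lambda)$ (\Cref{lem:lp-midpoint-defects}).
\begin{itemize}
\item \emph{Upper bound.} Clarkson's inequality gives the universal bound $M_n\le C2^{-np}$.
\item \emph{Lower bound from the atom.} At $\lambda_0=-x_0/y_0$ the vector $z=x+\lambda_0y$ has zero atomic coordinate. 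So each of $\lVert z\pm hy\rVert^p-\lVert z\rVert^p$ receives exactly $\lvert y_0\rvert^ph^p$ from the atom. The remaining parts have nonnegative sum by convexity. Passing from $p$th powers to norms costs only $O(h^2)=o(h^p)$ because $p<2$. Hence $M_n\ge c2^{-np}$.
\end{itemize}
Then $\lvert-\tfrac1n\log_2M_n-p\rvert\le D/n$ for a fixed $D$, and \Cref{lem:ellp-log-recovery} makes $p$ computable. The unknown constants $c,C,D$ absorb exactly the dependence on how $x,y$ are spread over coordinates, which is what defeats any uniform certificate. The maximization over $\lambda$ finds the direction exposing the atom without the algorithm ever having to recognize it.
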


The proof follows by
recovering $p$ from the decay of symmetric second differences of the
norm.  For $1<p<2$, the argument requires only a single atomic
coordinate and therefore extends to every separable $L^p$-space of
dimension at least two with a nonzero atomic part. Together with the known cases, this shows that
$\ell^p(\F)$ is computably presentable exactly when $p$ is computable,
for every $1\le p<\infty$.
Combining these results with \Cref{thm:main} and the classical
classification yields a complete classification of computable
presentability for separable $L^p$-spaces with $1\le p<\infty$.

As a complementary application of the Fra\"{\i}ss\'e-style finite-dimensional construction
method, we formulate an effective Fra\"{\i}ss\'e theorem for fixed
families of finite-dimensional Banach spaces with effectively
enumerable dense families of linear isometric embeddings.
Applying it to finite-dimensional
$\ell_\infty$-spaces gives a computable presentation of the
Gurarij Banach space.

The paper is organized as follows.  \Cref{sec:preliminaries} collects
the required background on computable analysis, measure theory and the geometry of Banach spaces.
\Cref{sec:banach-analysis} establishes the locally uniform effective
stability theorem, and \Cref{sec:proof-main} uses it to prove
\Cref{thm:main}.
\Cref{sec:exponent-classification} proves exponent recovery for
$\ell^p$, extends the argument to spaces with a nonzero atomic part,
and completes the classification of computably presentable separable
$L^p$-spaces.
Finally, \Cref{sec:gurarij-presentations} proves the effective
Fra\"{\i}ss\'e theorem for Banach spaces and applies it to the
Gurarij space.

\section{Preliminaries}\label{sec:preliminaries}
A \emph{computable Polish presentation} is a triple
$(X,d,(a_i)_{i\in\N})$, where $(X,d)$ is complete, $(a_i)$ is dense,
and $d(a_i,a_j)$ is a computable real uniformly in $i,j$.  The points
$a_i$ are called \emph{special points}.  A \emph{fast Cauchy name} for
$x\in X$ is a sequence $(i_n)$ such that $d(a_{i_n},a_{i_{n+1}})<2^{-n-1}$ and $x=\lim_n a_{i_n}$.
A map between computable Polish spaces is
\emph{computable} if a single Turing functional transforms every fast
Cauchy name of an input into a fast Cauchy name of its image.
For $i\in\N$ and $r\in\Q_{>0}$, let
$B(i,r)=\{x\in X:d(x,a_i)<r\}$ be a \emph{basic open ball}.  An open
set is \emph{c.e. open} if it is the union of a c.e. family of basic
open balls. The effective compactness principle used throughout the
proof is as follows.

\begin{lemma}[Downey and Melnikov {\cite{DMCC}}]
\label{thm:effective-compactness}
Let $X=(X,d,(a_i)_{i\in\N})$ be a nonempty computable Polish space.  The following are equivalent.
\begin{enumerate}[label=(\roman*)]
\item Given $n$, one can compute a finite tuple
$(i_j,r_j)_{j<m}$, where $r_j\in\Q_{>0}$ and $r_j<2^{-n}$, such that $X=\bigcup_{j<m}B(i_j,r_j)$.
\item There is a partial computable functional $\Gamma$ such that,
for every index $e$ enumerating a c.e. family $\mathcal U_e$ of basic
open balls, $\Gamma(e)$ is defined if and only if $\mathcal U_e$
covers $X$, and in that case $\Gamma(e)$ returns a finite subfamily
that still covers $X$.
\end{enumerate}
The conversions between witnesses to these conditions are uniform.
Any space satisfying these conditions is compact and is called
\emph{computably compact}.
\end{lemma}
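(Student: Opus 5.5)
We prove (i)$\Rightarrow$(ii) and (ii)$\Rightarrow$(i) separately.

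\emph{(i)$\Rightarrow$(ii).} Fix $e$ enumerating $\mathcal U_e=\{B(i_k,r_k)\}_k$. The functional $\Gamma(e)$ searches over stages $s$ and finite sets $F\subseteq\{0,\dots,s\}$ of enumerated balls. For each candidate it tests coverage by a sufficient condition that is c.e.\ to verify.

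Concretely, for a candidate $F$ and a precision $n$, compute the finite $2^{-n}$-net $(i_j,r_j)_{j<m}$ given by (i). Accept $F$ if for every $j<m$ there is $k\in F$ with
\[
d(a_{i_j},a_{i_k})+r_j<r_k,
\]
checked with strict inequality via rational approximations of the computable reals $d(a_{i_j},a_{i_k})$. This is semidecidable.

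\emph{Soundness.} If $F$ is accepted, then $B(i_j,r_j)\subseteq B(i_k,r_k)$ for each $j$, by the triangle inequality. Hence $X=\bigcup_j B(i_j,r_j)\subseteq\bigcup F$.

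\emph{Completeness.} If $\mathcal U_e$ covers $X$, we must show that some $F$ is eventually accepted. This is the only real point of the argument. First, $X$ is compact: it is complete and, by (i), totally bounded. By the Lebesgue number lemma there is $\delta>0$ such that every ball of radius $2\delta$ lies inside some member of $\mathcal U_e$. We want the strict inequality $d(a_{i_j},a_{i_k})+r_j<r_k$, so a slightly more careful form of this is needed.

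For each $x$, pick a ball $B(i_k,r_k)\ni x$ and let $\eta_x=r_k-d(x,a_{i_k})>0$. The open sets $\{y:d(y,a_{i_k})<r_k-\eta_x/2\}$ cover $X$, so finitely many of them do. Take $\delta$ less than a quarter of the minimum of the finitely many corresponding $\eta$ values. Then for any center $c$ and radius $r<\delta$, the point $c$ lies in one of these shrunken sets, so
\[
d(c,a_{i_k})+r<r_k-\eta/2+\delta<r_k.
\]
Choose $n$ with $2^{-n}<\delta$ and let $F$ be the finite set of balls used above; this $F$ is accepted at precision $n$. So $\Gamma(e)$ halts exactly when $\mathcal U_e$ covers $X$, and it then outputs a finite covering subfamily. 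If $\mathcal U_e$ does not cover $X$, soundness ensures no $F$ is ever accepted, so $\Gamma(e)$ diverges.

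\emph{(ii)$\Rightarrow$(i).} Given $n$, let $\mathcal U$ be the computable family of all balls $B(i,r)$ with $r$ rational and $r<2^{-n}$. It covers $X$ because the special points are dense. An index for $\mathcal U$ is computable from $n$, and $\Gamma$ applied to it returns a finite subcover, which is exactly the tuple required in (i).

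\emph{Uniformity and compactness.} Both conversions are uniform in the given witness. Compactness follows from completeness together with the total boundedness provided by (i).
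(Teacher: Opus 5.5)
Your proof is correct. The paper gives no proof of this lemma; it quotes it from Downey and Melnikov \cite{DMCC}, so there is nothing to compare against, and your argument is the standard one.

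For (i)$\Rightarrow$(ii), you certify a finite subfamily $F$ by a strict metric containment test: each ball of a sufficiently fine net must satisfy $d(a_{i_j},a_{i_k})+r_j<r_k$ for some $k\in F$. This test has the three properties you need:
\begin{itemize}
\item it is semidecidable;
\item it is sound by the triangle inequality;
\item it is eventually met whenever $\mathcal U_e$ covers $X$.
\end{itemize}
For (ii)$\Rightarrow$(i), you apply $\Gamma$ to the cover by all rational balls of radius $<2^{-n}$.

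You were right that the plain Lebesgue number lemma does not suffice. In a general metric space, containment of balls as sets does not yield the inequality between centers and radii that your test checks. Your compactness argument with the shrunken sets $\{y:d(y,a_{i_k})<r_k-\eta_x/2\}$ handles this correctly. Soundness also gives the divergence of $\Gamma(e)$ on non-covers, which the ``if and only if'' in (ii) requires.

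Two cosmetic points:
\begin{itemize}
\item You reuse the letters $i,r$ for both the net balls and the balls of $\mathcal U_e$; these should be disambiguated.
\item $\Gamma$ should explicitly dovetail over all triples $(s,F,n)$ together with the approximation procedures for the computable reals $d(a_{i_j},a_{i_k})$.
\end{itemize}
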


\begin{fact}[\cite{DMCC}]
\label{fact:effective-compactness-closure}
The following operations are uniform.
\begin{enumerate}[label=(\roman*)]
\item Finite products of computably compact spaces are computably
compact.
\item If $X$ is nonempty and computably compact and $f:X\to\R$ is
computable, then the values $\max_X f$ and $\min_X f$ are computable.
\end{enumerate}
\end{fact}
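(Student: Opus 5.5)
We prove (i) by covering products directly, and (ii) by reducing to the effective compactness criterion of \Cref{thm:effective-compactness}.

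\textbf{Part (i).} Let $X$ and $Y$ be computably compact. Give $X\times Y$ the max metric, with special points the pairs $(a_i,b_j)$ coded by a computable pairing; these distances are computable uniformly. Given $n$, use condition (i) of \Cref{thm:effective-compactness} to compute finite covers of $X$ and of $Y$ by balls of radii $<2^{-n}$. The products $B(i,r)\times B(j,s)$ of these balls cover $X\times Y$. Each product is contained in the ball about $(a_i,b_j)$ of radius $\max(r,s)<2^{-n}$, which gives a finite cover with the required radii. Induction on the number of factors handles any finite product, and the construction is uniform. If one prefers the Euclidean-type or sum metric instead, use radius $2^{-n-1}$ in each factor.

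\textbf{Part (ii).} Let $f$ be computed by a Turing functional $\Phi$. The plan is to compute $\max_X f$ by approximating it from both sides; $\min_X f$ then follows as $-\max_X(-f)$.

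\emph{Lower approximations.} Computing $f(a_i)$ to any precision is possible uniformly in $i$. Since $(a_i)$ is dense and $f$ is continuous,
\[
\max_X f=\sup_i f(a_i),
\]
so $\max_X f$ is left-c.e.

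\emph{Upper approximations.} For rational $q$, we claim that $\max_X f<q$ holds if and only if the open set $f^{-1}(-\infty,q)$ covers $X$. First, $f^{-1}(-\infty,q)$ is c.e. open, uniformly in $q$. To see this, run $\Phi$ on finite initial segments of Cauchy names. Whenever a segment ending at special point $a_i$ at stage $k$ forces an output approximation below $q$ with some margin, enumerate a ball about $a_i$ small enough that every point of the ball has a fast Cauchy name extending that segment. Such a ball exists: take radius $2^{-k-1}$ after shifting the indices of the name by one. By \Cref{thm:effective-compactness}(ii), $\Gamma$ halts on an index for this family exactly when the family covers $X$. Hence the set of rationals $q$ with $\max_X f<q$ is c.e., and $\max_X f$ is right-c.e.

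Being both left-c.e.\ and right-c.e., $\max_X f$ is computable, and every step above is uniform in the indices. Nonemptiness of $X$ guarantees that the maximum exists and that the supremum in the lower approximation is over a nonempty set.

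\textbf{Main obstacle.} The hard step is making the enumeration of $f^{-1}(-\infty,q)$ correct. Every enumerated ball must lie inside the preimage, and the enumerated balls must exhaust it. Both properties rely on the use principle for $\Phi$ applied to fast Cauchy names.
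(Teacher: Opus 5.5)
The paper gives no proof of this fact; it is quoted from \cite{DMCC}. Your argument is correct, and it is the standard one. In (i), each product ball lies inside the max-metric ball of radius $\max(r,s)<2^{-n}$. In (ii), combine two approximations to get $\max_X f$ uniformly: the left-c.e.\ bound $\sup_i f(a_i)$, computed on the constant names $(i,i,\ldots)$, and the right-c.e.\ bound obtained by running $\Gamma$ from \Cref{thm:effective-compactness}(ii) on the uniformly c.e.\ open cover $f^{-1}(-\infty,q)$, dovetailed over $q$.

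The one sentence worth rewriting is ``take radius $2^{-k-1}$ after shifting the indices of the name by one,'' because it merges two separate points.

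\emph{Soundness needs only the radius.} Call $(i_0,\ldots,i_k)$ a valid prefix if $d(a_{i_j},a_{i_{j+1}})<2^{-j-1}$ for $j<k$; this is semidecidable, and you should check it before enumerating. If the prefix is valid and $d(x,a_{i_k})<2^{-k-1}$, then $x$ has a fast name extending the prefix. So suppose $\Phi$ has already output $c_0,\ldots,c_l$ on that prefix with $c_l+2^{-l}\le q$. The use principle then gives $f(x)<c_l+2^{-l}\le q$ for every $x\in B(i_k,2^{-k-1})$.

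\emph{Exhaustion is where the shift is needed.} An arbitrary fast name of $x$ only guarantees $d(x,a_{i_k})<2^{-k}$. To see that a given $x$ with $f(x)<q$ lies in some enumerated ball, run $\Phi$ on a fast name of $x$ with its first term deleted. This is still a fast name, and it satisfies $d(x,a_{i_n})<2^{-n-1}$ for all $n$.

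With these two points made explicit, the proof is complete.
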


We now specialize the preceding framework to Banach spaces.  Throughout
the paper, fix a scalar field $\F\in\{\R,\C\}$.  Put
$\Q_{\R}=\Q$ and $\Q_{\C}=\Q+i\Q$.

\begin{definition}[Pour-El and Richards \cite{PER}]\label{def:computable-banach-presentation}
A \emph{computable presentation} of a separable $\F$-Banach space
$B$ is a sequence $(b_n)_{n\in\N}$ with dense linear span such that
$\left\|\sum_{j<m}q_jb_{i_j}\right\|$
is a computable real, uniformly in $m$, the indices $i_j$, and the
coefficients $q_j\in\Q_{\F}$. 
\end{definition}
A fixed effective enumeration of all
finite $\Q_{\F}$-linear combinations of the $b_n$'s gives the special
points of the associated computable Polish presentation.
Hence the definition above is equivalent
to a computable Polish presentation in which vector addition
$B\times B\to B$ and scalar multiplication $\F\times B\to B$, with
the standard computable presentation of $\F$, are computable.

Unless explicitly stated, all finite-dimensional spaces
below have positive dimension.
Write $\mathcal L(E,F)$ for the bounded linear operators and put
$m(T)=\inf_{\|x\|=1}\|Tx\|$.
A \emph{linear isomorphic embedding} is a bounded linear map with
$m(T)>0$; then $m(T)=\|T^{-1}:T(E)\to E\|^{-1}$.
A linear map is an \emph{isometric embedding} if it preserves norms.
We write $\Emb(E,F)$ for the set of linear isometric embeddings
from $E$ to $F$, equipped with the operator-norm metric.
We write $\Iso(E)$ for the group of surjective linear isometries
from $E$ to itself.

 Call
$E=(\F^d,\|\cdot\|_E)$ \emph{effective} if its norm is computable
uniformly on rational vectors.  For nonzero $E$, one can uniformly
find positive rationals $a_E,b_E$ such that
\begin{equation}\label{eq:effective-norm-comparison}
 a_E\|x\|_2\leq\|x\|_E\leq b_E\|x\|_2.
\end{equation}
Indeed, effectively choose rationals $c_j>\|e_j\|_E$ for the standard
basis vectors.  The triangle and Cauchy--Schwarz inequalities give
$\|x\|_E
\leq\left(\sum_{j=1}^dc_j^2\right)^{1/2}\|x\|_2$.
Thus one can effectively choose a rational $b_E$ satisfying the upper
comparison bound; the same estimate is an effective Lipschitz bound for
$\|\cdot\|_E$.  Hence the minimum of $\|\cdot\|_E$ on the computably
compact Euclidean unit sphere is a computable real.  The minimum is
positive, so one can effectively choose a positive rational $a_E$
below it.

The lemma below is a standard consequence of effective
compactness. 
\begin{lemma}[Brattka
\cite{BrattkaUniform}; Downey and Melnikov \cite{DMCC}]\label{lem:effective-operator-extrema}
Let $E$ and $F$ be nonzero effective finite-dimensional spaces.  From
their norm algorithms and computable names for the matrix entries of
$T\in\mathcal L(E,F)$, both $\|T\|$ and $m(T)$
are uniformly computable. 
\end{lemma}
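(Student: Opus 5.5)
The plan is to realize both $\|T\|$ and $m(T)$ as the maximum and minimum of a computable function on a computably compact space, and then appeal to \Cref{fact:effective-compactness-closure}(ii). Write $E=(\F^d,\|\cdot\|_E)$ and $F=(\F^k,\|\cdot\|_F)$, and let $S_2\subseteq\F^d$ be the Euclidean unit sphere. The space $S_2$ is computably compact, uniformly in $d$. To see this, for each $n$ search for a finite set of rational vectors $v$ with $|\|v\|_2-1|$ small and mesh below $2^{-n}$, then normalize them; this gives condition~(i) of \Cref{thm:effective-compactness}. Alternatively, $S_2$ is a computable closed subset of the computably compact cube $[-1,1]^{d}$ (or $[-1,1]^{2d}$ over $\C$), and this subset has computable distance function.

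Next, for $x\in S_2$ I will use the identities
\[
 \|T\|=\max_{x\in S_2}\frac{\|Tx\|_F}{\|x\|_E},\qquad m(T)=\min_{x\in S_2}\frac{\|Tx\|_F}{\|x\|_E},
\]
which hold by homogeneity: every nonzero vector is a positive multiple of a point of $S_2$, and the quotient is scale invariant. The extrema are attained because $S_2$ is compact.

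It then suffices to show that $g(x)=\|Tx\|_F/\|x\|_E$ is computable on $S_2$, uniformly in the data. I will check three points.
\begin{enumerate}[label=(\roman*)]
\item The map $x\mapsto Tx$ is computable $\F^d\to\F^k$ from computable names of the matrix entries, since it is given by finitely many products and sums.
\item The norm $\|\cdot\|_F$ is computable on all of $\F^k$, not just on rational vectors. The comparison \eqref{eq:effective-norm-comparison} supplies an effective Lipschitz constant $b_F$, so from a fast Cauchy name of $y$ by rational vectors $y_n$ we get $|\|y\|_F-\|y_n\|_F|\le b_F\|y-y_n\|_2$, and this allows computable approximation. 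The same argument applies to $\|\cdot\|_E$.
\item The denominator satisfies $\|x\|_E\ge a_E>0$ on $S_2$, with $a_E$ a known rational, so division is computable there.
\end{enumerate}
Hence $g$ is computable, and \Cref{fact:effective-compactness-closure}(ii) yields the computability of $\max g$ and $\min g$.

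All constants ($a_E$, $b_E$, $b_F$, and the compactness witnesses for $S_2$) are obtained uniformly from the norm algorithms, so the whole procedure is uniform, as claimed. The only mildly delicate step is the uniform computable compactness of $S_2$. It is standard, and if a direct argument is needed I would give the rational-net-plus-normalization construction, using that normalization $v\mapsto v/\|v\|_2$ moves near-unit rational vectors by at most $|\|v\|_2-1|$.
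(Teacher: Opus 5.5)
Your proposal is correct and matches the paper's intended argument. The paper gives no separate proof and calls the lemma a standard consequence of effective compactness. It already uses your ingredients in the preceding paragraph: the effective constants $a_E,b_F$ from \eqref{eq:effective-norm-comparison} and the computably compact Euclidean sphere. Writing $\|T\|$ and $m(T)$ as the maximum and minimum of $\|Tx\|_F/\|x\|_E$ over that sphere and applying \Cref{fact:effective-compactness-closure}(ii) is exactly that reasoning.
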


Let $E$ be $k$-dimensional, where $k\geq1$.  A basis $(x_i)_{i<k}$ of $E$ is an
\emph{Auerbach basis} if its biorthogonal functionals
$(x_i^*)_{i<k}\subseteq E^*$ satisfy $x_i^*(x_j)=\delta_{ij}$ and $\|x_i\|=\|x_i^*\|=1$ for all $i,j<k$.

\begin{lemma}[Auerbach's lemma; see Banach {\cite{Banach}}]\label{lem:auerbach}
Every nonzero finite-dimensional real or complex normed space $E$ has an Auerbach
basis.  For such a basis $(x_i)_{i<k}$,
\begin{equation}\label{eq:auerbach-coordinate-bounds}
 \max_{i<k}|a_i|
 \leq \left\|\sum_{i<k}a_ix_i\right\|
 \leq \sum_{i<k}|a_i|
 \qquad(a_0,\ldots,a_{k-1}\in\F).
\end{equation}
In particular, for every normed space $F$ and
$S,T\in\mathcal L(E,F)$,
\[
 \|S-T\|
 \leq\sum_{i<k}\|(S-T)x_i\|
 \leq k\max_{i<k}\|(S-T)x_i\|.
\]
\end{lemma}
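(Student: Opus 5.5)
The plan is to prove Auerbach's lemma by the classical determinant-maximization argument, and then read off the coordinate bounds and the operator estimate directly.

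\emph{Existence.} Identify $E$ with $\F^k$ through any fixed basis, and let $S_E=\{x\in E:\|x\|\le1\}$ be the closed unit ball, which is compact because $E$ is finite-dimensional. The function $(y_0,\ldots,y_{k-1})\mapsto|\det(y_0,\ldots,y_{k-1})|$ is continuous on the compact set $S_E^k$, where the determinant is taken with respect to the fixed basis. It therefore attains a maximum at some tuple $(x_0,\ldots,x_{k-1})$. This maximum is positive, since any basis can be rescaled into $S_E$. Consequently $(x_i)_{i<k}$ is a basis. By Cramer's rule, the biorthogonal functionals are
\[
x_i^*(y)=\frac{\det(x_0,\ldots,x_{i-1},y,x_{i+1},\ldots,x_{k-1})}{\det(x_0,\ldots,x_{k-1})}.
\]
Maximality then gives $|x_i^*(y)|\le1$ whenever $\|y\|\le1$, so $\|x_i^*\|\le1$. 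Moreover $x_i^*(x_i)=1$, which forces $\|x_i\|\ge1$ and $\|x_i^*\|\ge1$. Combined with $\|x_i\|\le1$, this gives $\|x_i\|=\|x_i^*\|=1$.

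In the complex case nothing changes: $|\det|$ is still continuous, and Cramer's rule holds over $\C$. The only delicate point I anticipate is verifying that the maximizer lies in $S_E$ rather than merely on the unit sphere. This is automatic, because we maximize over the ball itself.

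\emph{Coordinate bounds.} For $y=\sum_i a_ix_i$ we have
\[
|a_i|=|x_i^*(y)|\le\|x_i^*\|\,\|y\|=\|y\|,
\]
which gives the lower bound in \eqref{eq:auerbach-coordinate-bounds}. The triangle inequality together with $\|x_i\|=1$ gives the upper bound.

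\emph{Operator estimate.} For $\|y\|\le1$, write $y=\sum_i a_ix_i$; by the lower bound just proved, $|a_i|\le1$. Then
\[
\|(S-T)y\|\le\sum_i|a_i|\,\|(S-T)x_i\|\le\sum_i\|(S-T)x_i\|\le k\max_i\|(S-T)x_i\|.
\]
Taking the supremum over $\|y\|\le1$ yields both stated inequalities.
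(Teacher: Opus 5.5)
Your proposal is correct, and your operator estimate is exactly the paper's argument: the lower coordinate bound gives $|a_i|\le 1$ whenever $\|y\|\le 1$, and the triangle inequality plus a supremum finish. The paper only cites Banach for existence and treats the coordinate bounds as immediate, so your determinant-maximization argument via Cramer's rule (the classical proof, which works unchanged over $\C$) fills in details the paper omits rather than taking a different route.
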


\begin{proof}[Proof of the final assertion]
For $x=\sum_i a_ix_i$ with $\|x\|\leq1$,
\eqref{eq:auerbach-coordinate-bounds} gives $|a_i|\leq1$ for every $i$; apply the
triangle inequality and take the supremum over $x$.
\end{proof}
\begin{definition}
    Let $T:E\to F$ be a linear isomorphic embedding.  Its
\emph{logarithmic distortion} is
\[
 \ldis(T)=\sup_{x\neq0}
 \left|\log\frac{\|Tx\|}{\|x\|}\right|.
\]
For $\delta>0$, write $\Emb^{\log}_\delta(E,F)$ for the set of
linear isomorphic embeddings $T:E\to F$ with $\ldis(T)<\delta$.
\end{definition}

\begin{fact}\label{lem:distortion-calculus}
Let $S$ and $T$ be composable linear isomorphic embeddings. Then $\ldis(ST)\leq\ldis(S)+\ldis(T)$. Also $\ldis(T)=\max\{\log\|T\|,\log\|T^{-1}:T(E)\to E\|\}$, hence $\ldis(T^{-1})=\ldis(T)$.
\end{fact}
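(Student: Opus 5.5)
The statement to prove is \Cref{lem:distortion-calculus}: the subadditivity of $\ldis$ under composition, the formula for $\ldis(T)$ in terms of $\|T\|$ and $\|T^{-1}\|$, and the consequence $\ldis(T^{-1})=\ldis(T)$. All three follow directly from the definition of $\ldis$ as a supremum of $|\log(\|Tx\|/\|x\|)|$ over $x\neq0$. I would prove the formula first and then read off the other two claims.

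\textbf{Step 1: the formula.} Fix a linear isomorphic embedding $T:E\to F$ and $x\neq0$. Since $m(T)>0$, we have $Tx\neq0$. Using $|t|=\max\{t,-t\}$, write
\[
 \left|\log\frac{\|Tx\|}{\|x\|}\right|
 =\max\left\{\log\frac{\|Tx\|}{\|x\|},\ \log\frac{\|x\|}{\|Tx\|}\right\}.
\]
Suprema commute with finite maxima, so it suffices to compute each supremum separately.
\begin{itemize}
\item The supremum of the first term over $x\neq0$ is $\log\|T\|$, because $\log$ is increasing and continuous.
\item For the second term, substitute $y=Tx$. This is a bijection from $E\setminus\{0\}$ onto $T(E)\setminus\{0\}$. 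The term becomes $\log(\|T^{-1}y\|/\|y\|)$, whose supremum over $y\neq0$ is $\log\|T^{-1}:T(E)\to E\|$.
\end{itemize}
This gives $\ldis(T)=\max\{\log\|T\|,\log\|T^{-1}\|\}$. Note that one of the two logarithms may be negative; this is harmless. Indeed, $\|T\|\cdot\|T^{-1}\|\geq\|TT^{-1}\|=1$, so the maximum is nonnegative and agrees with the supremum of absolute values.

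\textbf{Step 2: the inverse.} $T^{-1}:T(E)\to E$ is itself a linear isomorphic embedding, and its inverse is $T$ viewed as a map $E\to T(E)$. The norm of $T$ does not change when the codomain is restricted to $T(E)$. Applying Step 1 to $T^{-1}$ therefore gives the same two quantities in swapped order, so $\ldis(T^{-1})=\ldis(T)$.

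\textbf{Step 3: composition.} Let $T:E\to F$ and $S:F\to G$. For $x\neq0$ we have $Tx\neq0$, so we may split the logarithm:
\[
 \log\frac{\|STx\|}{\|x\|}=\log\frac{\|S(Tx)\|}{\|Tx\|}+\log\frac{\|Tx\|}{\|x\|}.
\]
By the triangle inequality for absolute values, the first summand has absolute value at most $\ldis(S)$, taking the supremum over the nonzero vector $Tx\in F$. The second has absolute value at most $\ldis(T)$. Taking the supremum over $x$ gives $\ldis(ST)\leq\ldis(S)+\ldis(T)$. Note also that $m(ST)\geq m(S)m(T)>0$, so $ST$ is again a linear isomorphic embedding and $\ldis(ST)$ is defined.

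There is no real obstacle here. The only points needing care are that $Tx\neq0$ (so that splitting the logarithm is legitimate) and that the codomain of $T$ can be restricted to $T(E)$ without changing its norm.
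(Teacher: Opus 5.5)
Your proof is correct. The paper states this as a Fact without proof, and your direct verification from the definition of $\ldis$ (writing $|\log t|=\max\{\log t,\log t^{-1}\}$, substituting $y=Tx$, and splitting the logarithm through the nonzero intermediate vector $Tx$) is the routine argument it leaves implicit. Your remark that $\|T\|\,\|T^{-1}\|\geq1$ is not needed, since interchanging the supremum with the finite maximum already yields the formula.
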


For two $k$-dimensional normed linear spaces $E$ and $F$, their
\emph{logarithmic Banach--Mazur distance} (defined in \cite{BanachMazur}) is
\begin{equation*}
 \distBM(E,F)=
 \inf_{T:E\cong F}\log\bigl(\|T\|\,\|T^{-1}\|\bigr),
\end{equation*}
where the infimum is over all linear isomorphisms from $E$ onto $F$.
With this logarithmic convention, $\distBM$ is a metric on the
linear-isometry classes of $k$-dimensional normed spaces.

\begin{fact}\label{lem:bm-versus-ldis}
For nonzero finite-dimensional $E$ and $F$ of the same dimension,
$$\distBM(E,F)=2\inf_{T:E\cong F}\ldis(T).$$
\end{fact}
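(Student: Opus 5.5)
We prove the two inequalities separately, comparing the two quantities on the same linear isomorphism $T:E\to F$ and then optimizing. The link is the identity of \Cref{lem:distortion-calculus}, which expresses $\ldis(T)$ through $\log\|T\|$ and $\log\|T^{-1}\|$. The second ingredient is a rescaling: for $c>0$ the map $cT$ is again an isomorphism with the same product $\|cT\|\,\|(cT)^{-1}\|=\|T\|\,\|T^{-1}\|$, while $\ldis(cT)$ varies with $c$. Since $E$ and $F$ are finite-dimensional of the same dimension, linear isomorphisms $E\to F$ exist. For any such $T$, \Cref{lem:distortion-calculus} gives $\ldis(T)=\max\{\log\|T\|,\log\|T^{-1}\|\}$, where $T^{-1}:F\to E$ because $T(E)=F$.

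For the inequality $\distBM(E,F)\le 2\inf_T\ldis(T)$, put $a=\log\|T\|$ and $b=\log\|T^{-1}\|$. Then
\[
 \log\bigl(\|T\|\,\|T^{-1}\|\bigr)=a+b\le 2\max\{a,b\}=2\ldis(T).
\]
Taking the infimum over $T$ gives the claim. Note that $a+b\ge0$, since $1=\|\id\|\le\|T\|\,\|T^{-1}\|$, so both sides are nonnegative.

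For the reverse inequality, fix $T$ and rescale it to balance the two norms. Choose
\[
 c=\bigl(\|T^{-1}\|/\|T\|\bigr)^{1/2}.
\]
Then $\|cT\|=c\|T\|=(\|T\|\,\|T^{-1}\|)^{1/2}$, and likewise $\|(cT)^{-1}\|=c^{-1}\|T^{-1}\|=(\|T\|\,\|T^{-1}\|)^{1/2}$. Hence
\[
 \ldis(cT)=\tfrac12\log\bigl(\|T\|\,\|T^{-1}\|\bigr),
\]
which gives $2\inf_S\ldis(S)\le\log(\|T\|\,\|T^{-1}\|)$. Taking the infimum over $T$ yields $2\inf\ldis\le\distBM(E,F)$.

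There is no real obstacle. The only points requiring care are that the distortion identity of \Cref{lem:distortion-calculus} applies to surjective isomorphisms, with $T(E)=F$, and that the infima on both sides range over the same nonempty set of linear isomorphisms, which is closed under positive scaling.
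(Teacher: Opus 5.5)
Your proof is correct and follows the paper's argument. The paper uses the same rescaling $c=(\|T^{-1}\|/\|T\|)^{1/2}$ to get $\ldis(cT)=\tfrac12\log(\|T\|\,\|T^{-1}\|)\le\ldis(T)$ and then takes infima, which is your two inequalities packaged into a single chain.
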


\begin{proof}
For every isomorphism $T$, put
$c=(\|T^{-1}\|/\|T\|)^{1/2}$.  Then
$\ldis(cT)=\tfrac12\log(\|T\|\,\|T^{-1}\|)\leq\ldis(T)$;
take infima.
\end{proof}

The \emph{age} $\Age(E)$ of a Banach space $E$ is the collection of
linear-isometry classes of its nonzero finite-dimensional subspaces.  For $k\geq1$,
$\Age_k(E)$ is its $k$-dimensional part.

\begin{lemma}[{\cite[Example~2.31]{FLAMT}}]
\label{lem:lp-age-closed}
For every $1<r<2$ and $k\geq1$,
$\Age_k(L^r[0,1])$ is closed under Banach--Mazur limits.
\end{lemma}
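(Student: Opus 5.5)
We must show: if $E_n\in\Age_k(L^r[0,1])$ and $\distBM(E_n,E)\to0$ for some $k$-dimensional normed space $E$, then $E$ embeds linearly isometrically into $L^r[0,1]$. The plan is to pass to a limit of the embeddings themselves, using the fact that the class of subspaces of $L^r$-spaces is determined by finite-dimensional data and is stable under ultraproducts.

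\emph{Step 1: almost isometric embeddings of $E$.} By \Cref{lem:bm-versus-ldis}, for each $n$ pick an isomorphism $T_n:E\to E_n$ with $\ldis(T_n)<\distBM(E_n,E)/2+1/n$. Composing with a linear isometric embedding $E_n\hookrightarrow L^r[0,1]$ gives maps $S_n:E\to L^r[0,1]$ with $\ldis(S_n)\to0$. By \Cref{lem:distortion-calculus}, $\|S_n\|\to1$ and $m(S_n)\to1$.

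\emph{Step 2: pass to a limit space.} Fix a nonprincipal ultrafilter $\mathcal U$ on $\N$. The map $x\mapsto (S_nx)_n$ is then a linear isometric embedding of $E$ into the ultrapower $(L^r[0,1])^{\mathcal U}$. By the classical theorem of Dacunha-Castelle and Krivine, an ultrapower of an $L^r$-space is again an $L^r(\mu)$-space for some, possibly nonseparable, measure $\mu$. Thus $E$ is isometric to a $k$-dimensional subspace $F$ of some $L^r(\mu)$.

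\emph{Step 3: return to $L^r[0,1]$.} Choose a basis $f_1,\dots,f_k$ of $F$. These functions live on a $\sigma$-finite part of $\mu$ generated by countably many sets, so $F$ sits isometrically in a separable $L^r(\nu)$ with $\nu$ a probability measure; a change of density $f\mapsto f\,w^{1/r}$ handles the normalization. A separable $L^r(\nu)$ has the form $L^r$ of an atomless part plus $\ell^r$ or $\ell^r_m$. Each piece embeds isometrically into $L^r[0,1]$: atoms go to normalized indicators of disjoint intervals, and the atomless part by Carathéodory's measure-algebra isomorphism. Hence $E$ embeds into $L^r[0,1]$ and its class lies in $\Age_k(L^r[0,1])$.

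The main obstacle is Step 2, which rests on the ultrapower theorem. A more elementary alternative I would try first avoids ultrapowers. By the Gelfand--Vilenkin/Lévy representation, for $0<r\le2$ a $k$-dimensional space embeds in $L^r$ if and only if $\|x\|^r=\int_{S^{k-1}}|\langle x,\theta\rangle|^r\,d\sigma(\theta)$ for some finite positive measure $\sigma$ on the sphere. In this formulation the conclusion follows by weak-$*$ compactness: the representing measures $\sigma_n$ for $\|T_n^{-1}\cdot\|$ have uniformly bounded mass, and a weak-$*$ limit represents $\|\cdot\|_E$. The hypothesis $r<2$ (indeed $r\le2$) is exactly what makes this representation available. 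Either route suffices, and I would present the measure-representation version since its compactness step is transparent.
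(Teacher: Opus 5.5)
Your proposal is correct. The paper gives no proof of this lemma; it quotes it from \cite[Example~2.31]{FLAMT}. Your first route, via the Dacunha-Castelle--Krivine ultrapower theorem followed by a measure-algebra reduction to $L^r[0,1]$, is the standard justification, and each of its steps is sound.

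Your second route is a genuinely different and elementary argument, and it is precisely the non-effective core of the paper's own Section~3. Take $f_i^{(n)}=j_nT_ne_i$ for isometric embeddings $j_n:E_n\to L^r[0,1]$. Then:
\begin{itemize}
\item your $\sigma_n$ is the projective spectral measure $\sigma_{f^{(n)}}$ on $Q_k$, and the representation is \eqref{eq:spectral-norm};
\item the mass bound is the estimate $\|\sigma_f\|\le\sum_i\|f_i\|_r^r$ proved there, giving $\|\sigma_n\|\le e^{r\ldis(T_n)}\sum_i\|e_i\|_E^r$;
\item compactness is the classical content of \Cref{lem:measure-compact}, after padding to a common mass at $\partial$;
\item the return from a limit measure $\sigma\ne0$ to an isometric embedding of $E$ into $L^r[0,1]$ is the construction in \Cref{lem:quantitative-coupling}: take $F_i=\|\sigma\|^{1/r}s_i$ on $(Q_k\times[0,1],\|\sigma\|^{-1}\sigma\otimes\lambda)$ and apply \Cref{lem:rokhlin-model}.
\end{itemize}
Working on $Q_k$ with the pairing $z\cdot u$ also covers $\F=\C$, which your real notation $S^{k-1}$, $\langle x,\theta\rangle$ leaves implicit. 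The ultrapower route is shorter but imports a deep structure theorem. The measure route needs only weak-$*$ compactness of bounded measures on a compact space and the Rokhlin model, and it fits the paper's toolkit.

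Two corrections.

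First, the norms you must represent are $z\mapsto\|T_nz\|_{E_n}$ on the fixed coordinate space of $E$. These are the ones known to embed in $L^r$, and they converge to $\|\cdot\|_E$ uniformly on the unit sphere. As written, $\|T_n^{-1}\cdot\|$ makes $E_n$ isometric to $E$ itself, so representing it would already be the conclusion. You should also write out the mass bound rather than assert it, since it is the only quantitative input to the compactness step.

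Second, your remark that $r\le2$ is what makes the representation available is wrong. Both directions hold for every $r>0$: one is the pushforward of $\|\mathbf f(t)\|_2^r\dd\lambda(t)$, the other is realizing the coordinate functions on $(Q_k,\sigma)$. The range $0<r\le2$ belongs to the Bretagnolle--Dacunha-Castelle--Krivine positive-definiteness criterion, not to this representation. In your argument $r<2$ enters only through $\|\theta\|_2^r\le\sum_i|\theta_i|^r$ in the mass bound. For $r>2$ the bound $\|\theta\|_2^r\le k^{r/2-1}\sum_i|\theta_i|^r$ works equally well. So both of your routes prove closedness for all $1\le r<\infty$; the restriction in the lemma only reflects where the paper uses it.
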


\begin{definition}[see {\cite{FLAMT}}]\label{def:auh}
An infinite-dimensional separable Banach space $B$ is
\emph{approximately ultrahomogeneous} if, for every
$E\in\Age(B)$, every $\gamma,\eta\in\Emb(E,B)$,
and every $\epsilon>0$, there is
$U\in\Iso(B)$ such that $\|U\gamma-\eta\|<\epsilon$.
\end{definition}

\begin{lemma}[Lusky \cite{Lusky}]
\label{thm:lp-auh}
For every $1<r<2$, the space $L^r([0,1];\F)$ is approximately
ultrahomogeneous.
\end{lemma}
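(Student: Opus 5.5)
The plan is the classical route through the Plotkin--Rudin equimeasurability theorem, with two approximation steps to supply the slack that $\epsilon$ allows. Fix $E\in\Age(L^r[0,1])$ with basis $x_0,\dots,x_{k-1}$, fix $\gamma,\eta\in\Emb(E,L^r[0,1])$, and fix $\epsilon>0$. The goal is to normalize both embeddings so that a common vector goes to the constant function $1$. After that, the two images of the remaining basis vectors will be equimeasurable tuples, and an automorphism of the measure algebra will nearly carry one tuple to the other.

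\emph{Step 1: full support.} I would first find $x_0\in E$ with $\|x_0\|=1$ such that both $\gamma(x_0)$ and $\eta(x_0)$ are nonzero almost everywhere.
\begin{itemize}
\item Consider the union of the supports of all of $\gamma(E)$. If this union is not all of $[0,1]$, I would first replace $\gamma$ by a nearby embedding that fixes this. I would use an isometry of $L^r$ induced by a measure-preserving rearrangement, together with a small modification on a set of tiny measure.
\item Once the supports cover $[0,1]$, look at a fixed point $t$ of $[0,1]$. The vectors $x\in E$ with $\gamma(x)(t)=0$ form a proper hyperplane of $E$. By Fubini, a Lebesgue-generic $x$ then has $\gamma(x)\neq0$ almost everywhere.
\item The same argument applies to $\eta$, and a generic $x$ works for both embeddings simultaneously.
\end{itemize}
Since both embeddings may be perturbed by up to $\epsilon/3$, each of them can be assumed to have full support.

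\emph{Step 2: change of density.} Put $g=\gamma(x_0)$ and consider the probability measure $\nu_g=|g|^r\,d\lambda$. Because $g\neq0$ almost everywhere, $\nu_g$ is atomless and equivalent to $\lambda$. The map $f\mapsto f/g$ is a surjective linear isometry from $L^r(\lambda)$ onto $L^r(\nu_g)$. Composing it with a measure isomorphism $([0,1],\nu_g)\cong([0,1],\lambda)$ gives a surjective linear isometry $V_\gamma\in\Iso(L^r[0,1])$ with $V_\gamma\gamma(x_0)=1$. Define $V_\eta$ in the same way. Now put
\[
 f_i=V_\gamma\gamma(x_i),\qquad h_i=V_\eta\eta(x_i)\qquad(1\le i<k).
\]
Both $V_\gamma\gamma$ and $V_\eta\eta$ are isometric embeddings, so
\[
 \Bigl\|1+\sum_{1\le i<k}a_if_i\Bigr\|_r=\Bigl\|1+\sum_{1\le i<k}a_ih_i\Bigr\|_r
 \qquad\text{for all scalars }a_i.
\]
In the complex case the same holds with complex coefficients, so one may split into real and imaginary parts. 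Since $r\in(1,2)$ is not an even integer, Rudin's theorem (Plotkin's in the complex case) shows that $(f_i)$ and $(h_i)$ are equimeasurable. That is, they have the same joint distribution as random vectors on $([0,1],\lambda)$.

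\emph{Step 3: realizing the equimeasurability.} The main obstacle is to turn the equimeasurability into an isometry. If the tuples generated the full $\sigma$-algebra, the common distribution would give a measure isomorphism $\phi$ with $h_i=f_i\circ\phi$ exactly. In general they do not generate it, and I would proceed as follows.
\begin{itemize}
\item Approximate each $f_i$ in $L^r$ by $\tilde f_i=\Phi_i(f_1,\dots,f_{k-1})$, where $\Phi_i$ is a simple Borel function of the tuple, that is, a function taking finitely many values on a finite partition of $\F^{k-1}$.
\item Form $\tilde h_i=\Phi_i(h_1,\dots,h_{k-1})$. By equimeasurability, $\|\tilde h_i-h_i\|_r=\|\tilde f_i-f_i\|_r$.
\item The level sets of $(\tilde f_i)$ and of $(\tilde h_i)$ form finite partitions of $[0,1]$ whose corresponding pieces have equal measures. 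Map each piece of one partition measure-preservingly onto the matching piece of the other; this is possible because the measure is atomless. The result is a measure-preserving automorphism $\psi$ with $\tilde f_i\circ\psi=\tilde h_i$ exactly.
\end{itemize}
The composition operator $W f=f\circ\psi$ is a surjective linear isometry fixing $1$, and $\|W f_i-h_i\|_r$ is small for each $i$. Put $U=V_\eta^{-1}WV_\gamma$. Then $U\gamma(x_0)=\eta(x_0)$ exactly, and $U\gamma(x_i)$ is close to $\eta(x_i)$ for every $i$. The final assertion of \Cref{lem:auerbach}, applied with an Auerbach basis of $E$, turns these pointwise estimates into $\|U\gamma-\eta\|<\epsilon$, once the errors from Step 1 and Step 3 are chosen below $\epsilon/(3k)$.
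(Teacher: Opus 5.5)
The paper gives no proof of this lemma. It is quoted from Lusky and used as a black box, notably in \Cref{lem:quantitative-coupling}: there a coupling is realized on an auxiliary space, and ultrahomogeneity is what transports it back to $L^r[0,1]$. So there is nothing in the paper to compare your route against. Your argument is a correct, self-contained proof in the spirit of Lusky's derivation from the Plotkin--Rudin theorem, and it fits the paper's own machinery:
\begin{itemize}
\item Your full-support vector $x_0$ and density $|g|^r$ play the role of the functional $c$ with $m(H_c)=0$ and the weight $|c(u)|^r$ in \Cref{lem:cosine-injective}.
\item Your Step~3 replaces the appeal to ultrahomogeneity in \Cref{lem:quantitative-coupling}. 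It approximates the coordinate maps by simple functions and then builds an automorphism of the measure algebra matching two finite partitions with equal masses. Because of this, your route proves the statement rather than consuming it.
\item Nothing in it uses $r<2$ beyond $r\notin2\N$.
\end{itemize}

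Three points need tightening.

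(i) The first bullet of Step~1 is only gestured at. A surjective isometry of $L^r[0,1]$ cannot do the job on its own, since isometries of $L^r$ ($r\ne2$) preserve disjointness and so cannot make a non-full joint support full. The content lies in a non-surjective modification, which you should write out. For example, let $S$ be the joint support of $\gamma(E)$ with $\lambda(S)<1$.
\begin{itemize}
\item Choose $A\subseteq S$ of small positive measure with $\|\gamma(x)\mathbf 1_A\|_r\le\delta\|x\|$ for all $x\in E$; this follows from absolute continuity of the integral on the images of a basis.
\item Take a measure isomorphism $\psi$ of $(A\cup S^c,\lambda)$ onto $(A,c\lambda)$, where $c=\lambda(A\cup S^c)/\lambda(A)$.
\item Put $\gamma'(x)=\gamma(x)\mathbf 1_{S\setminus A}+c^{-1/r}(\gamma(x)\mathbf 1_A)\circ\psi$.
\end{itemize}
This $\gamma'$ is an isometric embedding with full joint support and $\|\gamma'-\gamma\|\le2\delta$.

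(ii) Your basis must contain the generic vector $x_0$, so it is generally not Auerbach. Replace $k$ in the last estimate by $\sum_i\|x_i^*\|$ for this basis. This is harmless: the basis is fixed before the Step~3 tolerance is chosen, and the Step~1 errors are already operator-norm errors.

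(iii) In the complex case, splitting into real and imaginary parts does not reduce matters to the real theorem. Use the complex-coefficient version directly, i.e.\ \Cref{thm:plotkin-rudin-uniqueness} with $\F=\C$, applied to the law of $(f_1,\dots,f_{k-1})$; its $r$-characteristic is $s\mapsto\|1+\sum_is_if_i\|_r$. Your attributions are also swapped relative to the paper, which credits the real form to Plotkin and the complex form to Rudin.
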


A measurable space is \emph{standard Borel} if its
$\sigma$-algebra is generated by a Polish topology.
A measurable set $A$ with $0<\mu(A)<\infty$ is an \emph{atom} if
every measurable $B\subseteq A$ has measure $0$ or $\mu(A)$; a
measure with no atoms is \emph{atomless}.
The \emph{pushforward} of $\mu$ under a measurable
map $T$ is $(T_*\mu)(E)=\mu(T^{-1}(E))$. 

\begin{lemma}[see
{\cite[Theorem~17.41]{Kechris}}]\label{lem:rokhlin-model}
Let $X$ be a standard Borel space carrying an atomless Borel
probability measure $\mu$.  Then, for every $1\leq r<\infty$,
$L^r(X,\mu;\F)$ is linearly isometric to $L^r([0,1];\F)$.
\end{lemma}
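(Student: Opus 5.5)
The statement just given is the Rokhlin-type model lemma: for a standard Borel space $X$ with an atomless Borel probability measure $\mu$, $L^r(X,\mu;\F)$ is linearly isometric to $L^r([0,1];\F)$. I would prove it by reducing to an isomorphism of measure spaces and then transporting functions. The key input is the classical isomorphism theorem for measures (Kechris, Theorem~17.41). It says that for a standard Borel space $X$ with an atomless Borel probability measure $\mu$, there is a Borel isomorphism $\phi: X\to[0,1]$ with $\phi_*\mu=\lambda$. If one only has this modulo null sets, say a Borel bijection between conull Borel subsets, that suffices for $L^r$ purposes.

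Given such a $\phi$, define $U:L^r([0,1],\lambda;\F)\to L^r(X,\mu;\F)$ by $Uf=f\circ\phi$. The steps are as follows.
\begin{enumerate}[label=(\arabic*)]
\item $U$ is well defined on equivalence classes. If $f=g$ $\lambda$-a.e., then the set $\{f\circ\phi\neq g\circ\phi\}=\phi^{-1}(\{f\ne g\})$ has $\mu$-measure $\lambda(\{f\neq g\})=0$.
\item $U$ is measurable-preserving, since composition of a measurable $f$ with a Borel $\phi$ is measurable. The Lebesgue-measurable versus Borel issue is handled by choosing Borel representatives, which every Lebesgue class has.
\item $U$ is isometric. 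By the change-of-variables formula for pushforwards, $\int_X|f\circ\phi|^r\dd\mu=\int_{[0,1]}|f|^r\dd(\phi_*\mu)=\int|f|^r\dd\lambda$. This holds first for indicator functions by the definition of pushforward, then for simple functions, then by monotone convergence.
\item $U$ is linear, which is clear.
\item $U$ is surjective. The inverse map $g\mapsto g\circ\phi^{-1}$ is also defined, because $\phi^{-1}$ is Borel and $(\phi^{-1})_*\lambda=\mu$.
\end{enumerate}

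The only real obstacle is the existence of $\phi$, i.e.\ the measure isomorphism theorem. Since the lemma is cited from Kechris, I would invoke it directly. If a sketch were demanded, I would argue as follows. Reduce to $X=[0,1]$ via a Borel isomorphism of $X$ with $[0,1]$, which exists when $X$ is uncountable; $X$ must be uncountable because an atomless probability measure cannot live on a countable set. Then take $\phi(x)=\mu([0,x])$. This cumulative distribution function is continuous because $\mu$ is atomless, and nondecreasing, and it pushes $\mu$ forward to $\lambda$. It fails to be injective only on intervals of $\mu$-measure zero, and there are countably many of them, so correcting on a null set yields the required isomorphism modulo null sets.
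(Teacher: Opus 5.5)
Your proposal is correct and takes the same route as the paper, which offers no argument beyond citing the measure isomorphism theorem \cite[Theorem~17.41]{Kechris}. You invoke that theorem to get a Borel $\phi$ with $\phi_*\mu=\lambda$ and transport functions by $f\mapsto f\circ\phi$, which is the intended deduction; since Kechris already gives a genuine Borel isomorphism, your mod-null-sets hedge and the CDF sketch are unnecessary but harmless.
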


For $N\geq1$, partition $[0,1]$ into intervals
$(I_{N,j})_{j<N}$ of length $1/N$.  For $1\leq r<\infty$, the map
\begin{equation}\label{eq:normalized-step-embedding}
 (a_j)_{j<N}\mapsto
 N^{1/r}\sum_{j<N}a_j\mathbf1_{I_{N,j}}
\end{equation}
is a linear isometric embedding of $\ell_r^N$ into $L^r[0,1]$.
On $[0,1]$, $\lambda$ denotes Lebesgue measure.
\begin{lemma}[see {\cite[Proposition~6.1.3]{AlbiacKalton}}]\label{thm:dyadic-approximation}
Let $1\leq r<\infty$, let
$\mathcal D_n=\{I_{2^n,j}:j<2^n\}$, and define the dyadic
conditional expectation by
\[
 P_nf=2^n\sum_{A\in\mathcal D_n}
       \left(\int_A f\dd\lambda\right)\mathbf1_A.
\]
Then $P_n$ is a linear projection of norm $1$ on $L^r[0,1]$,
and its range is linearly isometric to $\ell_r^{2^n}$ via
\eqref{eq:normalized-step-embedding}.  For every finite-dimensional
subspace $E\subseteq L^r[0,1]$,
$\|(P_n-\id)\restr E\|\to 0$.
\end{lemma}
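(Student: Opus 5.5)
The plan is to verify the three assertions of \Cref{thm:dyadic-approximation} in order: first that $P_n$ is a norm-one projection, then that its range is isometric to $\ell_r^{2^n}$, and finally the uniform convergence on a finite-dimensional $E$. The first two are direct calculations. The third goes through pointwise convergence, uniform boundedness, and the Auerbach estimate of \Cref{lem:auerbach}.

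\emph{Norm one and projection.} For $A\in\mathcal D_n$ we have $\lambda(A)=2^{-n}$, and Jensen's (or H\"older's) inequality gives
\[
 \Bigl|2^n\int_A f\dd\lambda\Bigr|^r\le 2^n\int_A|f|^r\dd\lambda .
\]
Multiplying by $\lambda(A)=2^{-n}$ and summing over $A\in\mathcal D_n$ yields $\|P_nf\|_r\le\|f\|_r$. The operator $P_n$ fixes every function that is constant on each cell of $\mathcal D_n$, and its range consists of such functions, so $P_n^2=P_n$. Since $P_n\mathbf 1_{[0,1]}=\mathbf 1_{[0,1]}$, the norm equals $1$.

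\emph{The range.} The range is $\{\sum_j c_j\mathbf 1_{I_{2^n,j}}\}$. A direct computation gives
\[
 \Bigl\|\sum_j c_j\mathbf 1_{I_{2^n,j}}\Bigr\|_r^r=2^{-n}\sum_j|c_j|^r .
\]
Hence the map \eqref{eq:normalized-step-embedding} with $N=2^n$ is a surjective linear isometry from $\ell_r^{2^n}$ onto the range of $P_n$.

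\emph{Convergence.} First I will show $P_nf\to f$ for every $f\in L^r[0,1]$. For continuous $g$, uniform continuity gives $\sup|P_ng-g|\le\omega_g(2^{-n})\to0$, where $\omega_g$ is the modulus of continuity, so $\|P_ng-g\|_r\to0$. Continuous functions are dense in $L^r[0,1]$ for $1\le r<\infty$. Given $f$ and $\eps>0$, choose continuous $g$ with $\|f-g\|_r<\eps$. Then
\[
 \|P_nf-f\|_r\le\|P_n(f-g)\|_r+\|P_ng-g\|_r+\|g-f\|_r\le2\eps+\|P_ng-g\|_r ,
\]
using $\|P_n\|\le1$, which gives pointwise convergence. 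Now let $E$ be finite-dimensional of dimension $k$, and take an Auerbach basis $(x_i)_{i<k}$ of $E$ by \Cref{lem:auerbach}. The final assertion of that lemma gives
\[
 \|(P_n-\id)\restr E\|\le k\max_{i<k}\|P_nx_i-x_i\|_r\to0 .
\]

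The only step requiring any care is the density-plus-equicontinuity argument for pointwise convergence; it is routine. Alternatively, one could cite the $L^r$ martingale convergence theorem, since the dyadic $\sigma$-algebras generate the Borel sets.
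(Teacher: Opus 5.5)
Your proof is correct: Jensen's inequality on each cell gives $\|P_n\|\le 1$, with equality because $P_n\mathbf 1_{[0,1]}=\mathbf 1_{[0,1]}$; the range computation with $N=2^n$ is exact; and density together with the uniform bound, followed by the Auerbach estimate, yields $\|(P_n-\id)\restr E\|\to0$. The paper gives no proof of its own and only cites Albiac--Kalton, and the standard argument for this fact runs the same way. The only common variation is to use density of the dyadic step functions $\bigcup_n P_n(L^r[0,1])$, on which $P_nf=f$ eventually, in place of continuous functions, which makes no essential difference.
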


\begin{corollary}\label{cor:finite-lr-approximation}
For $1\leq r<\infty$, every $E\in\Age_k(L^r[0,1])$, and
every $\eta>0$, there are $N\geq k$ and a $k$-dimensional subspace
$F\subseteq\ell_r^N$ such that $\distBM(E,F)<\eta$.
\end{corollary}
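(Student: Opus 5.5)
Fix $E\in\Age_k(L^r[0,1])$ and $\eta>0$. The plan is to push $E$ into the range of a dyadic conditional expectation using \Cref{thm:dyadic-approximation}, and then identify that range with $\ell_r^{2^n}$ through \eqref{eq:normalized-step-embedding}. Since $E$ is given only up to linear isometry, we may regard it as an actual $k$-dimensional subspace of $L^r[0,1]$.

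\emph{Step 1 (near-isometry).} By \Cref{thm:dyadic-approximation}, choose $n$ with $2^n\ge k$ and $\|(P_n-\id)\restr E\|<\theta$, where $\theta\in(0,1)$ is fixed below. For $x\in E$ with $\|x\|=1$ we have $\|P_nx\|\le 1$, because $\|P_n\|=1$. We also have $\|P_nx\|\ge \|x\|-\|(P_n-\id)x\|> 1-\theta$. Hence $P_n\restr E$ is injective, its image $F_0=P_n(E)$ is $k$-dimensional, and
\[
 \ldis(P_n\restr E)\le -\log(1-\theta).
\]

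\emph{Step 2 (transfer to $\ell_r^N$).} Put $N=2^n\ge k$. Let $J:\ell_r^N\to L^r[0,1]$ be the isometric embedding \eqref{eq:normalized-step-embedding}; by \Cref{thm:dyadic-approximation} its image is exactly $\operatorname{ran}P_n$. Set $F=J^{-1}(F_0)\subseteq\ell_r^N$, a $k$-dimensional subspace linearly isometric to $F_0$. Then $T=J^{-1}\circ P_n\restr E:E\to F$ is a linear isomorphism onto $F$. By \Cref{lem:distortion-calculus}, $\ldis(T)\le\ldis(P_n\restr E)$, since $J^{-1}$ is an isometry on $\operatorname{ran}P_n$.

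\emph{Step 3 (conclusion).} By \Cref{lem:bm-versus-ldis},
\[
 \distBM(E,F)\le 2\ldis(T)\le -2\log(1-\theta).
\]
Choosing $\theta$ with $-2\log(1-\theta)<\eta$ gives $\distBM(E,F)<\eta$.

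There is no real obstacle in this argument. The only points needing care are the lower bound on $\|P_nx\|$, which gives injectivity and so keeps the dimension equal to $k$, and choosing $n$ large enough that $N\ge k$. The latter is automatic once $P_n\restr E$ is injective, since $F_0\subseteq\operatorname{ran}P_n$ and $\operatorname{ran}P_n$ has dimension $2^n$.
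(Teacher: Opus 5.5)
Your proposal is correct and follows the paper's own argument. You project by a fine dyadic conditional expectation $P_n$, use $\|(P_n-\id)\restr E\|\to0$ to get injectivity with norm and inverse norm close to $1$, and identify $P_n(E)$ with a subspace of $\ell_r^{2^n}$; you simply make explicit the distortion estimate $\ldis(P_n\restr E)\le-\log(1-\theta)$ and the passage to $\distBM$ that the paper's two-line proof leaves implicit.
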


\begin{proof}
By \Cref{thm:dyadic-approximation}, the restrictions $P_n\restr E$
are eventually injective and their norms and inverse norms tend
to $1$.  Identifying $P_n(E)$ with a subspace of
$\ell_r^{2^n}$ therefore gives the assertion.
\end{proof}

The real case of the following theorem is due to Bretagnolle,
Dacunha-Castelle, and Krivine \cite{BDK}; the complex case is due to
Herz \cite{Herz} and Rosenthal \cite{Rosenthal}. 
\begin{theorem}
\label{thm:stable-variable-embedding}
If $1<u\leq v\leq2$, then there is
$T\in\Emb(L^v([0,1];\F),L^u([0,1];\F))$.
In particular,
$\Age(L^v[0,1])\subseteq\Age(L^u[0,1])$.
\end{theorem}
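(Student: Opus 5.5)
The plan is to prove the embedding through a stable process: an $L^v$-valued random variable with symmetric $v$-stable coordinates gives an isometric copy of $L^v$ inside $L^u$, since $u<v$. Concretely, for $1<v<2$ fix a standard symmetric $v$-stable random variable $\theta$ on a probability space $(X,\mu)$. Its characteristic function is $\mathbb E e^{it\theta}=e^{-|t|^v}$, and in the complex case we use a rotation-invariant complex $v$-stable variable. Take an independent sequence $(\theta_n)$ of copies, realized on the product space $X^{\N}$. This space is standard Borel and carries an atomless probability measure. For $v=2$ we use Gaussians instead, and the argument below goes through verbatim.

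The first key step is the identity
\[
\Bigl\|\sum_{n<N} a_n\theta_n\Bigr\|_u=c_{u,v}\Bigl(\sum_{n<N}|a_n|^v\Bigr)^{1/v},
\qquad c_{u,v}=\|\theta\|_u .
\]
This follows because $\sum a_n\theta_n$ has the same distribution as $(\sum|a_n|^v)^{1/v}\theta$, which we read off from the characteristic functions (or, in the complex case, from rotation invariance). We also need $c_{u,v}<\infty$, which holds since $\mathbb E|\theta|^u<\infty$ for $u<v$ by the standard tail estimate $\mu(|\theta|>t)\sim Ct^{-v}$. After normalizing by $c_{u,v}$, this gives an isometric embedding of $\ell_v$ into $L^u(X^{\N})$. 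By \Cref{lem:rokhlin-model}, $L^u(X^{\N})\cong L^u[0,1]$.

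To pass from $\ell_v$ to all of $L^v[0,1]$, we use a stable random measure in place of the sequence. The cleanest route is the following. Consider simple functions $f=\sum_j a_j\mathbf 1_{A_j}$ with $(A_j)$ disjoint. Build an independently scattered $v$-stable random measure $M$ on $[0,1]$ with control measure $\lambda$: for disjoint sets, $M(A_j)$ are independent and distributed as $\lambda(A_j)^{1/v}\theta$. Define $Tf=c_{u,v}^{-1}\sum_j a_jM(A_j)$. The identity above gives $\|Tf\|_u=\|f\|_v$.

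Next we check well-definedness and linearity. Common refinements of partitions reduce both to finite additivity of $M$, which holds almost surely and in $L^u$, because the distributions are consistent under refinement. Such an $M$ exists by Kolmogorov extension, applied on the dyadic algebra to the consistent family of finite-dimensional stable laws. Alternatively, one can avoid existence theory: first embed $\ell_v^{2^n}$ compatibly along the dyadic refinements, splitting each $\theta$-variable into two independent halves scaled by $2^{-1/v}$. This is again possible by stability, and the directed union is formed on the product space. Simple dyadic step functions are dense in $L^v[0,1]$ by \Cref{thm:dyadic-approximation}, so $T$ extends to an isometric embedding $L^v[0,1]\to L^u[0,1]$.

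The age inclusion then follows at once. A finite-dimensional subspace of $L^v[0,1]$ maps isometrically onto a subspace of $L^u[0,1]$.

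I expect the main obstacle to be the compatibility step: producing the refining, mutually consistent stable families on a single standard probability space. This matters especially in the complex case, where we need rotation-invariant stable vectors and the characteristic-function computation $\mathbb E e^{i\operatorname{Re}(\bar z\sum a_n\theta_n)}=e^{-c|z|^v\sum|a_n|^v}$. I would handle it with the dyadic inverse-limit construction, realized as a product of countably many copies of $X$. The splitting at each stage is a measurable function of fresh independent coordinates, since a $v$-stable variable equals in law $2^{-1/v}(\theta'+\theta'')$.
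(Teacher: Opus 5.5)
The paper gives no proof of its own here; it cites Bretagnolle--Dacunha-Castelle--Krivine for the real case and Herz and Rosenthal for the complex case. Your isometry via an independently scattered $v$-stable random measure (rotation-invariant in the complex case, built by Kolmogorov extension on the dyadic algebra) is the classical argument behind those references, and it is correct. Add that the case $u=v$ is handled by the identity map, since for $v<2$ a $v$-stable variable has $\mathbb{E}|\theta|^v=\infty$. Your alternative dyadic-splitting route needs a disintegration of the law of $\theta'$ given $\theta'+\theta''$ (plus a fresh uniform variable), which you leave implicit and the Kolmogorov route avoids.
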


We record the measure-theoretic form of the
Plotkin--Rudin equimeasurability principle.  The real form is due to
Plotkin \cite{Plotkin}, and the complex form to Rudin \cite{Rudin}.

Let $n\geq1$, let $1<r<\infty$, and let $\mu$ be a finite positive
Borel measure on $\F^n$.  We say that $\mu$ has a \emph{finite $r$-moment}
if $\int_{\F^n}\|w\|_2^r\dd\mu(w)<\infty$.
For such a measure,
its \emph{$r$-characteristic} is the function
\[
 \widehat\mu^{(r)}(s)
 =\left(\int_{\F^n}
   \left|1+\sum_{i=1}^n s_iw_i\right|^r\dd\mu(w)\right)^{1/r},
 \qquad s=(s_1,\ldots,s_n)\in\F^n.
\]

\begin{theorem}[Plotkin--Rudin]
\label{thm:plotkin-rudin-uniqueness}
Let $n\geq1$, and let
$1<r<\infty$ not be a positive even integer.  If $\mu$ and $\nu$
are finite positive Borel measures on $\F^n$ with finite $r$-moments and
$\widehat\mu^{(r)}(s)=\widehat\nu^{(r)}(s)$ for every $s\in\F^n$,
then $\mu=\nu$.
\end{theorem}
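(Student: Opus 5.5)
The plan is to reduce to a one-dimensional statement using homogeneity and the Cramér--Wold device, and then to settle the one-dimensional case with the Fourier transform of $|\cdot|^r$. The exclusion of even integers enters only through the nonvanishing of a single constant.

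\textbf{Step 1 (homogenization).} For $t\in\F\setminus\{0\}$ and $s\in\F^n$, homogeneity gives
\[
\int|t+s\cdot w|^r\dd\mu(w)=|t|^r\bigl(\widehat\mu^{(r)}(s/t)\bigr)^r,
\]
where $s\cdot w=\sum_i s_iw_i$. Let $t\to0$. Dominated convergence applies, with dominating function a multiple of $1+\|w\|_2^r$, which is integrable by the finite $r$-moment. Hence the function
\[
G_\mu(t,s)=\int|t+s\cdot w|^r\dd\mu(w)
\]
is determined by $\widehat\mu^{(r)}$ on all of $\F\times\F^n$. Consequently $G_\mu=G_\nu$.

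\textbf{Step 2 (reduction to one dimension).} Fix $s\in\F^n$ and let $\sigma_s$ be the pushforward of $\mu$ under $w\mapsto s\cdot w\in\F$. This is a finite measure on $\F$ with finite $r$-moment, and $G_\mu(t,s)=\int_\F|t+z|^r\dd\sigma_s(z)$. Suppose we know that a finite measure $\sigma$ on $\F$ with finite $r$-moment is determined by $g_\sigma(t)=\int|t+z|^r\dd\sigma(z)$ for $t\in\F$. Then $\sigma_s(\mu)=\sigma_s(\nu)$ for every $s$. In particular the Fourier transforms satisfy $\widehat\mu(\xi)=\widehat\nu(\xi)$ for all $\xi$.
\begin{itemize}
\item In the real case, $\widehat\mu(\xi)$ is the value at $1$ of the Fourier transform of $\sigma_\xi$.
\item In the complex case, identify $\F^n$ with $\R^{2n}$. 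Every real linear functional has the form $\operatorname{Re}(s\cdot w)$, which is a real-linear function of the $\C$-valued variable $s\cdot w$, so it is again read off from $\sigma_s$.
\end{itemize}
Since finite measures are determined by their Fourier transforms, $\mu=\nu$.

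\textbf{Step 3 (one-dimensional uniqueness, the main obstacle).} Regard $\F$ as $\R^m$ with $m\in\{1,2\}$. Then $g_\sigma=|\cdot|^r*\tilde\sigma$, where $\tilde\sigma$ is the reflection of $\sigma$. This function has polynomial growth, so it is a tempered distribution. On $\R^m\setminus\{0\}$ the distributional Fourier transform of $|x|^r$ is the locally integrable function $c_{r,m}|\xi|^{-m-r}$, with
\[
c_{r,m}=\pi^{-r-m/2}\,2^{r}\,\frac{\Gamma((r+m)/2)}{\Gamma(-r/2)}.
\]
This constant is nonzero exactly when $-r/2$ is not a pole of $\Gamma$, that is, when $r$ is not a positive even integer; this is where the hypothesis is used.

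For test functions $\varphi$ with $\widehat\varphi$ supported away from $0$, compute
\[
\langle g_\sigma,\varphi\rangle
=\int\!\!\int|x|^r\varphi(x-z)\dd x\dd\tilde\sigma(z).
\]
The interchange of integrals is justified by Fubini using the $r$-moment. Passing to the Fourier side yields $\widehat{g_\sigma}=c_{r,m}|\xi|^{-m-r}\,\widehat{\tilde\sigma}$ as distributions on $\R^m\setminus\{0\}$. Because $c_{r,m}\neq0$, the continuous function $\widehat{\tilde\sigma}$ is determined on $\xi\neq0$ by $g_\sigma$, and hence everywhere by continuity. Thus $\sigma$ is determined by $g_\sigma$.

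The delicate points are the precise statement of the distributional identity for $\widehat{|x|^r}$ off the origin and the justification of the convolution theorem for the tempered distribution $|x|^r$ against a finite measure with only an $r$-th moment. I would handle both by pairing only with Schwartz functions whose Fourier transforms vanish near $0$. Against such functions the polynomial part of the tempered distribution contributes nothing, and all integrals converge absolutely.
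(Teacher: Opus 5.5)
Your proposal is correct, but there is no proof in the paper to compare it with. The paper quotes the theorem from Plotkin (real case) and Rudin (complex case) and uses it as a black box, only for $1<r<2$, in the proof of \Cref{lem:cosine-injective}. Your argument therefore replaces a citation with a self-contained proof. It rehomogenizes to recover $\int|t+s\cdot w|^r\dd\mu(w)$ on all of $\F\times\F^n$. It then reduces to one variable by Cram\'er--Wold; the observation that the functionals $\operatorname{Re}(s\cdot w)$ exhaust the real functionals handles $\F=\C$. Finally it settles the one-variable problem on $\R^m$, $m\in\{1,2\}$, through the Fourier transform of $|x|^r$ away from the origin. This buys two things. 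It treats both scalar fields uniformly. It also shows exactly how the hypothesis enters: the exclusion of even $r$ is precisely $1/\Gamma(-r/2)\neq0$. Nothing in the argument uses $r>1$.

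You flagged the convolution step as delicate, and the safe way to do it is as follows. For $\phi\in C_c^\infty(\R^m\setminus\{0\})$, write $\langle\widehat{g_\sigma},\phi\rangle=\langle g_\sigma,\widehat\phi\rangle$. Use Fubini, justified by the $r$-moment, to reduce to the inner integrals $\int|x|^r\widehat\phi(x+y)\dd x$ for fixed $y$. Observe that $\widehat\phi(\cdot+y)$ is the Fourier transform of $\phi$ multiplied by a character. You may therefore apply to it the precise off-origin identity
\[
 \int_{\R^m}|x|^r\,\widehat\psi(x)\dd x=c_{r,m}\int_{\R^m}|\xi|^{-r-m}\psi(\xi)\dd\xi
 \qquad\bigl(\psi\in C_c^\infty(\R^m\setminus\{0\})\bigr).
\]
Only afterwards integrate in $y$, which gives $c_{r,m}\int|\xi|^{-r-m}\widehat{\tilde\sigma}(\xi)\phi(\xi)\dd\xi$. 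Do not apply the identity to the convolution of the test function with $\sigma$ itself. With only an $r$-th moment, that convolution is not Schwartz and its Fourier transform is only finitely differentiable.

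There are two harmless slips. With $\widehat f(\xi)=\int f(x)e^{-2\pi i x\cdot\xi}\dd x$, the constant is $\pi^{-r-m/2}\Gamma((r+m)/2)/\Gamma(-r/2)$, without your factor $2^r$. Also, pairing against $\tilde\sigma$ produces $\varphi(x+z)$ rather than $\varphi(x-z)$. Neither affects the argument, since only the nonvanishing of $c_{r,m}$ is used.
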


\section{An effective stability modulus}\label{sec:banach-analysis}

We obtain an effective strengthening of the qualitative approximate
ultrahomogeneity in \Cref{thm:lp-auh}: a computable stability modulus
for almost isometric embeddings, depending only on the dimension,
the desired accuracy, and a compact rational interval of exponents.
Throughout this section fix a rational interval
$I=[a,b]\subset(1,2)$.

\begin{theorem}
\label{thm:uniform-stability}
There is a total computable rational-valued function $\omega_I:\N_{\geq1}\times\Q_{>0}\to\Q_{>0}$
such that, for every $r\in I$, $\varepsilon \in \Q_{>0}$, $k \ge 1$, 
$E\in\Age_k(L^r[0,1])$, and
$\gamma_0,\gamma_1\in\Emb^{\log}_{\omega_I(k,\eps)}(E,L^r[0,1])$,
there is
$U\in\Iso(L^r[0,1])$ satisfying $\|U\gamma_0-\gamma_1\|<\eps$.
\end{theorem}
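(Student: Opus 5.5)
The plan is to prove the statement in two stages: first establish that a positive modulus exists for each fixed $(k,\eps)$, uniformly over $r\in I$, by a compactness argument; then make it computable by a search certified through effective compactness (\Cref{thm:effective-compactness}).

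\textbf{Existence.} Fix $k$ and $\eps$, and suppose there is no positive $\delta$ that works. Then we get sequences $r_n\in I$, $E_n\in\Age_k(L^{r_n}[0,1])$ and $\gamma_0^n,\gamma_1^n\in\Emb^{\log}_{1/n}(E_n,L^{r_n}[0,1])$ such that no $U\in\Iso(L^{r_n}[0,1])$ satisfies $\|U\gamma_0^n-\gamma_1^n\|<\eps$. We pass to subsequences to make everything converge:
\begin{itemize}
\item $r_n\to r\in I$, by compactness of $I$;
\item $E_n\to E$ in Banach--Mazur distance, by compactness of the Banach--Mazur compactum of $k$-dimensional spaces;
\item $E\in\Age_k(L^r[0,1])$, by \Cref{lem:lp-age-closed} together with continuity of the ages in $r$. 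This continuity comes from \Cref{cor:finite-lr-approximation}: approximate by subspaces of $\ell_{r_n}^N$, then use that $\ell_{r_n}^N\to\ell_r^N$ in $\distBM$ for fixed $N$.
\end{itemize}
The images $\gamma_i^n(E_n)$ are almost isometric copies of $E$. Using Auerbach bases (\Cref{lem:auerbach}), the pairs $(\gamma_0^n,\gamma_1^n)$ are described by $2k$-tuples of vectors in $L^{r_n}$ whose joint distribution is nearly that of two isometric copies of $E$.

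The hard part is transferring to a single exponent, and this is the main obstacle. Approximate ultrahomogeneity (\Cref{thm:lp-auh}) is a statement about one fixed $r$. I would handle this through Plotkin--Rudin (\Cref{thm:plotkin-rudin-uniqueness}). A pair of $k$-tuples $(f_i),(g_i)$ in $L^r$ admits $U\in\Iso$ with $Uf_i\approx g_i$ if and only if the joint distributions, suitably normalized in the sense of $r$-characteristics, are close. To make this uniform I would argue via ultraproducts: an ultraproduct of the $L^{r_n}[0,1]$ with $r_n\to r$ is an $L^r(\mu)$-space, and the limiting embeddings become isometric embeddings of $E$. In that limit, approximate ultrahomogeneity of the separable $L^r$-subspace generated by the images, combined with Plotkin--Rudin uniqueness of the image measures, yields equimeasurable representations. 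Equimeasurability should then transfer back to finite $n$ by weak-$*$ continuity of the $r$-characteristics in $(r,\mu)$, producing $U$ for large $n$. This gives the contradiction, and hence some $\delta(k,\eps)>0$ that works for all $r\in I$.

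\textbf{Computability.} Given a candidate rational $\delta$, the property ``$\delta$ works for $(k,\eps)$'' should be expressible through finite data: $\ell_r^N$-approximations with $N$ bounded by a computable function, rational matrices, and compact parameter sets. The parameter space (exponent $r\in I$, $k$-dimensional subspaces of $\ell_r^N$, almost isometric embeddings into $\ell_r^M$) is computably compact by \Cref{fact:effective-compactness-closure}. The goal is to show that the set of parameters where a rational witness (a signed permutation/measure-preserving step map, which extends to an element of $\Iso$) gives error below $\eps$ is c.e.\ open. Existence then guarantees that for small enough $\delta$ these c.e.\ open sets cover the whole parameter space, and \Cref{thm:effective-compactness}(ii) halts exactly when a cover is found. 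The algorithm searches over $\delta=2^{-j}$ together with the cover, and outputs $\omega_I(k,\eps)$ as the first $\delta$ certified. Totality follows from the existence step, and correctness follows because the step-function witnesses extend to genuine isometries of $L^r[0,1]$, with \Cref{thm:dyadic-approximation} controlling the approximation error.
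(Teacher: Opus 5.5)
The genuine gap is in your computability step. The theorem quantifies over every $E\in\Age_k(L^r[0,1])$ and every pair of almost isometric embeddings into the infinite-dimensional space $L^r[0,1]$, and this configuration space is not compact. You propose to replace it by subspaces of $\ell_r^N$ and embeddings into $\ell_r^M$ ``with $N$ bounded by a computable function,'' but nothing supplies such a bound. The dyadic approximation of \Cref{thm:dyadic-approximation} needs a partition that depends on the particular functions (for instance, on how concentrated they are), not only on $k$ and $\delta$. A cover certifying all configurations inside a fixed $\ell_r^N$ says nothing about configurations that need a finer partition, so neither totality nor correctness of your search is supported.

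Even a quantitative theorem giving $(1+\eps)$-isomorphic copies of $k$-dimensional subspaces of $L^r$ inside $\ell_r^{N(k,\eps)}$ would not close this. Aligning a configuration with such a copy by an element of $\Iso(L^r[0,1])$ is itself an instance of the stability statement being proved. Your witness class is also too small. A signed permutation or measure-preserving step map preserves the distribution of $|f|$. So it cannot bring $\gamma_0(1)=\mathbf 1_{[0,1]}$ within distance $2^{-1/r}$ of $\gamma_1(1)=2^{1/r}\mathbf 1_{[0,1/2]}$, although both are isometric embeddings of $\F$; weighted maps are unavoidable.

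The missing idea is to run effective compactness on the isometry invariant of a configuration rather than on concrete configurations. The paper sends the image $f$ of an Auerbach basis to its projective spectral measure, extended by a cemetery atom to fixed mass $M=4k$. This lies in $\mathcal P_M(K_k)$, which is computably compact independently of any partition size (\Cref{lem:measure-compact}). On it the cosine-transform discrepancy $D$ is computable (\Cref{prop:D-computable}), and \eqref{eq:spectral-norm} shows $D$ is controlled by the distortions of $\gamma_0,\gamma_1$. Injectivity of the cosine transform makes $D+\max\{\eta-W_1,0\}$ strictly positive on $I\times\mathcal P_M(K_k)^2$, and its computable minimum yields the modulus (\Cref{lem:effective-inverse}). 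No witness isometry is ever searched for or certified. $U$ is produced non-effectively by \Cref{lem:quantitative-coupling}: a coupling on an auxiliary atomless space, followed by \Cref{thm:lp-auh} at the single exponent $r$.

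That coupling lemma is exactly the step your existence argument asserts without proof (``close distributions yield a nearby isometry''). With it, the ultraproduct is unnecessary: weak-$*$ limits of the spectral measures on the compact space $Q_k$, together with \Cref{lem:cosine-injective}, already give the contradiction. Finally, your justification that the limit $E$ lies in $\Age_k(L^r[0,1])$ via $\ell_{r_n}^N\to\ell_r^N$ fails as written. The $N$ needed for $E_n$ may grow with $n$, while $\distBM(\ell_{r_n}^N,\ell_r^N)$ is only bounded by $|1/r_n-1/r|\log N$.
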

Note that the algorithm computes the modulus only: it does not compute $U$.
To control $\|U\gamma_0-\gamma_1\|$, it suffices to make
$U\gamma_0x_i$ close to $\gamma_1x_i$ on an Auerbach basis
$(x_i)_{i<k}$ of $E$.  The small distortions of the embeddings imply
that corresponding linear combinations of these two tuples have
nearly equal norms.  To use this norm information to make the two tuples coordinatewise close after applying a suitable surjective linear isometry to one of them, we associate to each tuple a measure on a compact
projective space whose cosine transform records the $r$th powers
of these norms.  Injectivity of the cosine transform and computable
compactness then give an effective bound on the Wasserstein distance
between the fixed-mass extensions of the two measures, uniform in
$r\in I$.  A coupling produces nearby isometric copies of the tuples
on an auxiliary atomless probability space, and \Cref{thm:lp-auh}
transfers this approximation to the original tuples under some
$U\in\Iso(L^r[0,1])$.

Let $Q_k$ be the projective space of $\F^k$.  Its points are
one-dimensional subspaces $[u]=\F u$, with $u$ chosen from the
Euclidean unit sphere $S(\F^k)$.  Put
$G_\F=\{\theta\in\F:|\theta|=1\}$, so that
$G_\R=\{-1,1\}$ and $G_\C$ is the unit circle.
Equip $Q_k$ with
\[
 d_Q([u],[v])=\min_{\theta\in G_\F}\|u-\theta v\|_2.
\]
The identity
\[
\begin{aligned}
d_Q([u],[v])^2
&=\min_{\theta\in G_\F}
  \left(
    \|u\|_2^2+\|\theta v\|_2^2
    -2\operatorname{Re}
      \langle u,\theta v\rangle_{\ell_2^k}
  \right)\\
&=2-2\max_{\theta\in G_\F}
  \operatorname{Re}
  \langle u,\theta v\rangle_{\ell_2^k}=2-2\left|
  \langle u,v\rangle_{\ell_2^k}
  \right|
\end{aligned}
\]
gives $\operatorname{diam}(Q_k)\leq\sqrt2$.
Take as special points the classes of unit vectors in $\Q_\F^k$.
It is clear from \Cref{thm:effective-compactness}(i) that $Q_k$ is
computably compact.

For a finite Borel measure $\sigma$ on $Q_k$ and $r>0$, define \emph{the projective cosine transform}
\[
 (\mathcal C_r\sigma)(z)
   =\int_{Q_k}|z\cdot u|^r\dd\sigma([u]),
 \qquad z\in\F^k,
\]
where $z\cdot u=\sum_{j<k}z_ju_j$.  The absolute value makes the
integrand independent of the unit representative of $[u]$.
The following lemma is a consequence of
\Cref{thm:plotkin-rudin-uniqueness}. 

\begin{lemma}\label{lem:cosine-injective}
For finite positive Borel measures \(\sigma\) and \(\nu\) on \(Q_k\), if
\(1<r<2\) and \(\mathcal C_r\sigma=\mathcal C_r\nu\), then
\(\sigma=\nu\).
\end{lemma}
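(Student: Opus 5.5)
We deduce \Cref{lem:cosine-injective} from \Cref{thm:plotkin-rudin-uniqueness} with $n=k-1$, after trading the projective measures for measures on $\F^{k-1}$ by dehomogenizing. The obstacle is that the Plotkin--Rudin characteristic uses the affine form $1+\sum s_iw_i$, whereas $\mathcal C_r$ uses the linear form $z\cdot u$ on a projective space. We therefore split $Q_k$ according to whether the first coordinate vanishes and argue by induction on $k$. For $k=1$, $Q_1$ is a single point, and evaluating at $z=1$ gives $\sigma(Q_1)=\nu(Q_1)$.

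Write $Q_k=Q_k^0\sqcup H$, where $Q_k^0=\{[u]:u_0\ne0\}$ is open and $H=\{[u]:u_0=0\}$ is closed. The set $H$ is identified isometrically with $Q_{k-1}$ via the last $k-1$ coordinates.

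On $Q_k^0$ choose the representative with $u_0>0$, and map $[u]\mapsto w=(u_1/u_0,\ldots,u_{k-1}/u_0)\in\F^{k-1}$. This is a Borel bijection onto $\F^{k-1}$, with $u_0=(1+\|w\|_2^2)^{-1/2}$. Define measures on $\F^{k-1}$ by $d\mu(w)=u_0^r\,d(\text{push of }\sigma\restr Q_k^0)$ and $\nu'$ similarly. These are finite and positive. Their $r$-moments are $\int\|w\|_2^r u_0^r\,d\sigma\le\int 1\,d\sigma<\infty$, since $\|w\|_2u_0\le1$.

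For $z=(1,s)$ we have $|z\cdot u|^r=u_0^r|1+s\cdot w|^r$ on $Q_k^0$. Hence
\[
 (\mathcal C_r\sigma)(1,s)=\widehat\mu^{(r)}(s)^r+\int_H|s\cdot u'|^r\dd\sigma,
\]
and likewise for $\nu$.

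The difficulty is the $H$-term. We handle it by scaling $z=(1,ts)$ and letting $t\to\infty$. Then $t^{-r}(\mathcal C_r\sigma)(1,ts)=\int_{Q_k}|t^{-1}u_0+s\cdot u'|^r\dd\sigma\to\int_{Q_k}|s\cdot u'|^r\dd\sigma=(\mathcal C_r\sigma)(0,s)$ by dominated convergence, with the bound $(1+\|s\|_2)^r$. This only recovers the values at $z_0=0$, which are already given directly, since $\mathcal C_r\sigma=\mathcal C_r\nu$ on all of $\F^k$.

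So the cleaner route is as follows. The equality at $z=(0,s)$ says that the measures on $Q_{k-1}$ given by pushing $|u'|$-weighted $\sigma$ agree. That forgets the mass on $Q_k^0$ in a weighted way, so we do not use induction there. Instead, first note that $\sigma(H)$ plays no role after we rotate coordinates: by finiteness, the set of hyperplanes $\{z_0'=0\}$ in some coordinate system carrying positive $\sigma+\nu$ mass is countable. The cosine transform is equivariant under unitary changes of coordinates, since $\mathcal C_r(U_*\sigma)(z)=\mathcal C_r\sigma(U^Tz)$. So we may choose a unitary $U$ such that $(U_*\sigma+U_*\nu)(H)=0$. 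This is possible because uncountably many pairwise distinct hyperplanes, one through each direction of $Q_k$, cannot all carry positive mass of a finite measure, as the masses of their pairwise intersections are controlled by induction on dimension.

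With $H$ null, the display gives $\widehat\mu^{(r)}=\widehat{\nu'}^{(r)}$ on $\F^{k-1}$. \Cref{thm:plotkin-rudin-uniqueness} applies since $1<r<2$ is not an even integer, and yields $\mu=\nu'$. Dividing by the positive density $u_0^r$ and pulling back along the bijection gives $\sigma\restr Q_k^0=\nu\restr Q_k^0$. Together with $\sigma(H)=\nu(H)=0$ this gives $U_*\sigma=U_*\nu$, hence $\sigma=\nu$.

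The main obstacle we expect is the null-hyperplane choice. We plan to justify it by the standard argument that a finite measure gives positive mass to at most countably many sets from a family with pairwise null intersections, applied inductively to projective subspaces of decreasing dimension.
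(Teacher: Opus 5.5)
Your reduction is the same as the paper's. You pick a projective hyperplane that is null for $m=\sigma+\nu$, dehomogenize with respect to a functional vanishing exactly on it, and weight by $|c(u)|^r$ (your $u_0^r$). You then apply \Cref{thm:plotkin-rudin-uniqueness} on $\F^{k-1}$ and remove the weight. Rotating by a unitary $U$ and using $\mathcal C_r(U_*\sigma)(z)=\mathcal C_r\sigma(U^{T}z)$ instead of the paper's basis $c,d_1,\ldots,d_{k-1}$ is only a cosmetic difference. Your moment bound via $\|w\|_2u_0=\|u'\|_2\le1$ is correct. The scaling paragraph and the remark about $z=(0,s)$ are dead ends and can be dropped.

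The one step that does not hold as written is the existence of the null hyperplane. Your claim that only countably many hyperplanes carry positive $\sigma+\nu$ mass is false once $k\ge3$: for $\sigma=\delta_{[e_0]}$, every hyperplane through $[e_0]$ is charged, and there are uncountably many. For the same reason, the ``pairwise null intersections'' principle cannot be applied to the family of all hyperplanes. Two hyperplanes meet in a codimension-two subspace that may itself be charged, and induction on dimension does not make all of these intersections null at once.

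The missing idea is to restrict to a pencil with a null axis. Write $P(W)=\{[u]:u\in W\}$. Argue by induction on $k$; in the base case $k=2$, hyperplanes are points and at most countably many are atoms. Apply the induction hypothesis to $m$ restricted to some $P(W_0)\cong Q_{k-1}$. This gives a $(k-2)$-dimensional linear subspace $L$ with $m(P(L))=0$. The hyperplanes $W\supseteq L$ form an uncountable family with $W\cap W'=L$ for $W\ne W'$. Hence the sets $P(W)\setminus P(L)$ are pairwise disjoint, and all but countably many of the $P(W)$ are $m$-null.

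Alternatively, the paper avoids this combinatorics with Fubini. For each fixed $[u]$, the set of functionals $c$ with $c(u)=0$ is Lebesgue-null. So $\int_B m(H_c)\,d\mathcal L_B(c)=0$, and almost every $c$ works. With either repair, the rest of your argument goes through.
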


\begin{proof}
The space \(Q_1\) is a singleton, so a finite measure on
it is determined by its total mass.
Hence assume \(k\geq2\).
Put \(m=\sigma+\nu\). We may assume that \(m(Q_k)>0\).
  
  We first choose a nonzero functional
\(c\in(\F^k)^*\) such that $m(H_c)=0$ where
\[
 H_c=\{[u]\in Q_k:c(u)=0\}.
\]
Indeed, let \(B\) be the Euclidean unit ball in the
underlying real vector space of \((\F^k)^*\), and let
\(\mathcal L_B\) be Lebesgue measure restricted to \(B\).  The set
$\{(c,[u])\in B\times Q_k:c(u)=0\}$
is Borel.  For each fixed nonzero \(u\), the section
\(\{c\in B:c(u)=0\}\) is contained in a proper real linear subspace.  It is therefore \(\mathcal L_B\)-null.  The Fubini-Tonelli theorem
now yields
\[
 \int_B m(H_c)\dd\mathcal L_B(c)
 =\int_{Q_k}\mathcal L_B\{c\in B:c(u)=0\}\dd m([u])=0.
\]
Thus \(m(H_c)=0\) for almost every \(c\in B\); choose one such \(c\).
It is automatically nonzero.  In particular,
\(\sigma(H_c)=\nu(H_c)=0\).
Complete \(c\) to a basis
\[c,d_1,\ldots,d_{k-1}\] of the \(\F\)-linear dual
\((\F^k)^*\).  On \(Q_k\setminus H_c\), define the affine-coordinate
map
\[
 \chi_c([u])
 =\left(\frac{d_i(u)}{c(u)}\right)_{1\leq i<k}
 \in\F^{k-1}.
\]
For $v \in \F^k$, define
$T(v)=(c(v),d_1(v),\ldots,d_{k-1}(v))$,
then 
$w\mapsto
 \left[
 \frac{T^{-1}(1,w_1,\ldots,w_{k-1})}
      {\|T^{-1}(1,w_1,\ldots,w_{k-1})\|_2}
 \right]$
is the inverse of \(\chi_c\).  Hence \(\chi_c\) is a homeomorphism from \(Q_k\setminus H_c\) onto \(\F^{k-1}\).

Define finite measures on \(Q_k\setminus H_c\) by $\dd\rho_\sigma([u])=|c(u)|^r\dd\sigma([u])$, and 
$\dd\rho_\nu([u])=|c(u)|^r\dd\nu([u])$,
and let 
$$\alpha=(\chi_c)_*\rho_\sigma,\quad \beta=(\chi_c)_*\rho_\nu.$$
Since $1<r<2$, we
have $\|w\|_2^r\leq\sum_{i=1}^{k-1}|w_i|^r$ for $w \in \F^{k-1}$.  Hence 
\[
 \begin{aligned}
 \int_{\F^{k-1}}\|w\|_2^r\dd\alpha(w)
 \leq\sum_{i=1}^{k-1}\int_{\F^{k-1}}|w_i|^r\dd\alpha(w)
 &=\sum_{i=1}^{k-1}\int_{Q_k\setminus H_c}
   \left|\frac{d_i(u)}{c(u)}\right|^r|c(u)|^r
   \dd\sigma([u])\\
 &=\sum_{i=1}^{k-1}
   \int_{Q_k\setminus H_c}|d_i(u)|^r\dd\sigma([u])<\infty.
 \end{aligned}
\]
The same calculation applies to
\(\beta\).  Thus both measures have finite \(r\)-moments.

Fix \(s=(s_i)_{1\leq i<k}\in\F^{k-1}\), and set
$\ell_s=c+\sum_{i=1}^{k-1}s_id_i\in(\F^k)^*$.  If
$(e_j)_{j<k}$ is the standard basis of $\F^k$, put $z_s=(\ell_s(e_j))_{j<k}$.
Then $\F$-linearity gives
$\ell_s(u)=\sum_{j<k}\ell_s(e_j)u_j=z_s\cdot u$ for every
$u\in\F^k$.  Then the $r$-characteristic satisfies
\[
 \begin{aligned}
 \bigl(\widehat\alpha^{(r)}(s)\bigr)^r
 &=\int_{Q_k\setminus H_c}
   \left|1+\sum_{i=1}^{k-1}s_i\frac{d_i(u)}{c(u)}\right|^r
   |c(u)|^r\dd\sigma([u])\\
 &=\int_{Q_k\setminus H_c}|\ell_s(u)|^r\dd\sigma([u])
 =\int_{Q_k}|z_s\cdot u|^r\dd\sigma([u])
   =(\mathcal C_r\sigma)(z_s).
 \end{aligned}
\]
The third equality uses \(\sigma(H_c)=0\).  The identical computation
with \(\nu\) gives
$\bigl(\widehat\beta^{(r)}(s)\bigr)^r
 =(\mathcal C_r\nu)(z_s)$.
The hypothesis \(\mathcal C_r\sigma=\mathcal C_r\nu\) therefore
implies
\(\widehat\alpha^{(r)}=\widehat\beta^{(r)}\).  Since \(1<r<2\),
\Cref{thm:plotkin-rudin-uniqueness} yields \(\alpha=\beta\).
Since \(\chi_c\) is a homeomorphism, $\rho_\sigma=(\chi_c^{-1})_*\alpha$ and $\rho_\nu=(\chi_c^{-1})_*\beta$,
hence \(\rho_\sigma=\rho_\nu\).  To remove the weight,
put $E_n=\{[u]\in Q_k:|c(u)|\geq1/n\}$.
For every Borel \(A\subseteq Q_k\), the function \(|c|^{-r}\) is
bounded on \(E_n\), and hence
\[
 \sigma(A\cap E_n)
 =\int_{A\cap E_n}|c|^{-r}\dd\rho_\sigma
 =\int_{A\cap E_n}|c|^{-r}\dd\rho_\nu
 =\nu(A\cap E_n).
\]
Finally, \(E_n\uparrow Q_k\setminus H_c\), while both measures vanish
on \(H_c\).  Continuity from below gives \(\sigma(A)=\nu(A)\) for
every Borel \(A\), and therefore \(\sigma=\nu\).
\end{proof}

Adjoin to $Q_k$ an isolated \emph{cemetery point} $\partial$: define $K_k=Q_k\sqcup\{\partial\}$, and set $d(\partial,q)=1$ for $q\in Q_k$ and $d(p,q)=d_Q(p,q)$ for $p,q \in Q_k$.
Then $(K_k,d)$ is a
computably compact metric space.  For $M>0$, let
$\mathcal P_M(K_k)$ be the space of finite Borel measures on $K_k$
having total mass $M$, equipped with the 
\emph{$1$-Wasserstein metric}
$W_1(\mu,\nu)=\inf_{\pi\in\Pi(\mu,\nu)}
       \int d(x,y)\dd\pi(x,y)$,
where $\Pi(\mu,\nu)$ is the set of finite Borel measures on
$K_k\times K_k$ whose first and second marginals are respectively
$\mu$ and $\nu$. 
The point $\partial$ is used only to supplement two measures to a common
total mass. 

\begin{lemma}[Marcone and
Valenti \cite{MarconeValenti}; see Downey and Melnikov
{\cite[Lemma~4.50]{DMCC}}]\label{lem:measure-compact}
$\mathcal P_M(K_k)$ is computably compact, uniformly in $k$ and
rational $M>0$.  More precisely, let $(x_j)_{j\in\N}$ consist of
$\partial$ and the classes of unit vectors in
$\Q_\F^k$.  The rational atomic measures
\begin{equation}\label{eq:atomic-special-measures}
 \mathscr A_{k,M}=
 \left\{\sum_{j<\ell}a_j\delta_{x_{i_j}}:
 \ell\geq1,\ a_j\in\Q_{\geq0},\ \sum_{j<\ell}a_j=M\right\}
\end{equation}
form an effective special sequence, with uniformly computable finite
basic covers.
\end{lemma}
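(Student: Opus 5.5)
This is the standard fact that the space of fixed-mass measures on a computably compact metric space is computably compact under $W_1$. I will verify \Cref{thm:effective-compactness}(i) directly for $\mathcal P_M(K_k)$ with special points $\mathscr A_{k,M}$.

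\emph{Computability of the distances.} For two rational atomic measures $\mu=\sum_j a_j\delta_{x_{i_j}}$ and $\nu=\sum_l b_l\delta_{x_{i'_l}}$ of common mass $M$, $W_1(\mu,\nu)$ is the value of a finite transportation linear program. Its constraint data are rational, and its cost matrix $d(x_{i_j},x_{i'_l})$ is computable: each entry is $1$, $0$, or $d_Q$ between classes of rational unit vectors. By the displayed identity, $d_Q=(2-2|\langle u,v\rangle|)^{1/2}$, which is computable. The optimum of the program is attained at one of finitely many vertices of the transportation polytope. These vertices are rational and can be listed, so $W_1$ is a computable minimum of finitely many computable reals. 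Uniformity in $k,M$ is clear.

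\emph{Density and completeness.} Since $K_k$ is compact, $W_1$ metrizes weak convergence on $\mathcal P_M(K_k)$, and this space is weakly compact by Prokhorov. Hence it is complete. Density of $\mathscr A_{k,M}$ follows from the covering step below.

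\emph{Finite covers.} Given $n$, the main step is as follows.
\begin{enumerate}[label=(\arabic*)]
\item Use computable compactness of $K_k$ to compute a finite $2^{-n-2}/M$-net of special points $y_0,\dots,y_{m-1}$ (include $\partial$, and cover $Q_k$ via \Cref{thm:effective-compactness}(i)).
\item For any $\mu$, form a Borel partition of $K_k$ into pieces $A_t$, each inside the ball around $y_t$. Transporting each piece to its center gives $W_1\bigl(\mu,\sum_t\mu(A_t)\delta_{y_t}\bigr)\le 2^{-n-2}$.
\item Round the weights $\mu(A_t)$ to multiples of $M/N$ summing to $M$, with $N$ large. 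Moving total mass at most $mM/N$ across distance at most $\operatorname{diam}K_k\le\sqrt2$ costs at most $\sqrt2\,mM/N$. Choose $N$ computably with this below $2^{-n-2}$.
\end{enumerate}
The finitely many measures $\sum_t (c_tM/N)\delta_{y_t}$ with $c_t\in\N$ and $\sum c_t=N$ then form a computable finite cover by balls of radius $2^{-n-1}<2^{-n}$. This verifies \Cref{thm:effective-compactness}(i), uniformly in $k$ and $M$.

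I expect the only delicate point to be exact computability of $W_1$ between atomic measures. If one wishes to avoid the vertex argument, the Kantorovich dual can be used instead: $W_1$ is a maximum over $1$-Lipschitz functions on a finite set, which is a compact polytope with computable constraints, so the value is computable by \Cref{fact:effective-compactness-closure}(ii). Everything else is routine bookkeeping following Marcone--Valenti.
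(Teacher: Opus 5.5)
The paper gives no proof of this lemma; it imports it from Marcone--Valenti and Downey--Melnikov. Your argument is a correct, self-contained verification of \Cref{thm:effective-compactness}(i) along the standard lines behind those citations: $W_1$ on $\mathscr A_{k,M}$ is the minimum of a computable linear cost over the finitely many rational vertices of the transportation polytope, completeness follows from compactness of $\mathcal P_M(K_k)$, and the covers come from pushing $\mu$ onto a finite net of $K_k$ and quantizing the weights.

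The only loose spot is your optional Kantorovich-dual fallback. \Cref{fact:effective-compactness-closure}(ii) needs a computably compact domain, and the Lipschitz polytope degenerates when two listed atoms coincide in $K_k$ (e.g.\ $[u]=[-u]$); this is detectable here because $|\langle u,v\rangle|^2\in\Q$. Your vertex argument already makes the fallback unnecessary, so nothing depends on it.
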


For $r\in I$, $z\in S(\F^k)$, define $$\phi_{r,z}([u])=|z\cdot u|^r \text{ and } \phi_{r,z}(\partial)=0.$$ 
The map $(r,z,q)\mapsto\phi_{r,z}(q)$ is computable. Since
$|s^r-t^r|\leq b|s-t|$ for all $s,t \in [0,1]$,
we have
\[
 |\phi_{r,z}(q)-\phi_{r,z}(q')|\leq b\,d(q,q').
\]

A fast $W_1$-Cauchy name for $\mu\in\mathcal P_M(K_k)$ supplies
$\mu_n\in\mathscr A_{k,M}$ with $W_1(\mu_n,\mu)<2^{-n}$.
Fix any coupling $\pi\in\Pi(\mu_n,\mu)$. The preceding Lipschitz estimate gives
\[
 \begin{aligned}
 \left|\int_{K_k}\phi_{r,z}\dd\mu_n
       -\int_{K_k}\phi_{r,z}\dd\mu\right|
 &=\left|\int_{K_k\times K_k}
       \bigl(\phi_{r,z}(q)-\phi_{r,z}(q')\bigr)
       \dd\pi(q,q')\right|\\
 &\leq b\int_{K_k\times K_k}d(q,q')\dd\pi(q,q').
 \end{aligned}
\]
Taking the
infimum over all $\pi\in\Pi(\mu_n,\mu)$ yields
\[
 \left|\int_{K_k}\phi_{r,z}\dd\mu_n
       -\int_{K_k}\phi_{r,z}\dd\mu\right|
 \leq bW_1(\mu_n,\mu)<b2^{-n}.
\]
Since $\mu_n\in\mathscr A_{k,M}$, the first integral is a finite sum
computable uniformly in $r,z,\mu_n$.  The displayed error bound is
independent of $r$ and $z$.  Consequently the map
$(r,z,\mu)\mapsto\int_{K_k}\phi_{r,z}\dd\mu$
is computable uniformly in $k$, $M$, $r$, $z$, and $\mu$, and hence so is
$(r,z,\mu,\nu)\mapsto
 \int_{K_k}\phi_{r,z}\dd(\mu-\nu)$.
Define
\[
 D(r,\mu,\nu)=
 \sup_{\|z\|_2=1}
 \left|\int_{K_k}\phi_{r,z}\dd(\mu-\nu)\right|=\sup_{\|z\|_2=1}\left|  (\mathcal C_r\mu|_{Q_k})(z)-(\mathcal C_r\nu|_{Q_k})(z)\right|.
\]

\begin{lemma}\label{prop:D-computable}
For every $k\geq1$ and rational $M>0$, $D:I\times\mathcal P_M(K_k)^2\to\R$
is computable uniformly in $k,M$.
\end{lemma}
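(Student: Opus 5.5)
We compute $D$ as the maximum of a computable function on a computably compact space, via \Cref{fact:effective-compactness-closure}(ii). The function is
\[
 g(r,\mu,\nu,z)=\left|\int_{K_k}\phi_{r,z}\dd(\mu-\nu)\right| .
\]
The discussion preceding the statement shows that $(r,z,\mu,\nu)\mapsto\int_{K_k}\phi_{r,z}\dd(\mu-\nu)$ is computable, uniformly in $k$ and $M$. Composing with the absolute value shows that $g$ is computable on $I\times\mathcal P_M(K_k)^2\times S(\F^k)$. Here $S(\F^k)$ carries its standard computable presentation with rational unit vectors (or rational points normalized) as special points. The sphere $S(\F^k)$ is computably compact uniformly in $k$, by \Cref{thm:effective-compactness}(i), since finite $2^{-n}$-nets of normalized rational vectors are computable. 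Then $D(r,\mu,\nu)=\max_{z\in S(\F^k)}g(r,\mu,\nu,z)$; the supremum is attained because $z\mapsto g$ is continuous on a compact set.

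\Cref{fact:effective-compactness-closure}(ii) only gives the maximum of a computable function with no parameters, so we need a parametrized version. We do not apply the fact directly; instead we exploit a uniform modulus of continuity in $z$. For $z,z'\in S(\F^k)$ and $q=[u]\in Q_k$, we have $\bigl||z\cdot u|-|z'\cdot u|\bigr|\le\|z-z'\|_2$, and $|z\cdot u|,|z'\cdot u|\le1$. The estimate $|s^r-t^r|\le b|s-t|$ on $[0,1]$ then gives $|\phi_{r,z}(q)-\phi_{r,z'}(q)|\le b\|z-z'\|_2$, and this also holds trivially at $\partial$. Integrating against $\mu$ and $\nu$, both of mass $M$, yields
\[
 |g(r,\mu,\nu,z)-g(r,\mu,\nu,z')|\le 2Mb\,\|z-z'\|_2 ,
\]
uniformly in $r,\mu,\nu$.

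The algorithm is as follows. Given a precision $2^{-n}$ and names for $r,\mu,\nu$, compute, using \Cref{thm:effective-compactness}(i) for $S(\F^k)$, a finite $\delta$-net $z_0,\dots,z_{m-1}$ of special points of the sphere with $2Mb\delta<2^{-n-1}$. Next, compute each $g(r,\mu,\nu,z_j)$ to within $2^{-n-1}$, and output the maximum of these approximations. Every $z$ lies within $\delta$ of some $z_j$, so
\[
 \max_j g(r,\mu,\nu,z_j)\le D(r,\mu,\nu)\le \max_j g(r,\mu,\nu,z_j)+2^{-n-1}.
\]
The output is therefore within $2^{-n}$ of $D(r,\mu,\nu)$. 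All steps are uniform in $k$ and $M$, because the nets, the constant $b$ (fixed by $I$), and the integral algorithm are uniform.

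The only delicate point is the uniformity over parameters, and the explicit Lipschitz bound in $z$ handles it. One could alternatively apply \Cref{fact:effective-compactness-closure}(ii) on the product $I\times\mathcal P_M(K_k)^2\times S(\F^k)$, but that computes a global maximum, not the parametrized one, so the net argument is what we will use.
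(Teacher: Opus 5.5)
Your proof is correct. Its skeleton matches the paper's: $D(r,\mu,\nu)$ is the maximum, over the computably compact sphere $S(\F^k)$, of the computable function $z\mapsto\bigl|\int_{K_k}\phi_{r,z}\dd(\mu-\nu)\bigr|$.

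The difference is in the last step. The paper simply cites \Cref{fact:effective-compactness-closure}(ii). It reads the stated uniformity of that fact as relativizing to names of the parameters $(r,\mu,\nu)$. This is legitimate, because a uniform maximization procedure works equally well with an oracle. So your worry that the fact ``only'' handles parameter-free functions is more cautious than necessary.

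You instead prove the needed parametrized instance by hand. The estimate $|\phi_{r,z}(q)-\phi_{r,z'}(q)|\le b\|z-z'\|_2$ is the $z$-analogue of the Lipschitz bound in $q$ used just before the lemma. It gives the modulus $2Mb$, uniformly in $r,\mu,\nu$. A computable $\delta$-net of special points on the sphere then reduces the supremum to a finite maximum, with an explicit total error of $2^{-n}$. Your error bookkeeping is correct.

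What your route buys is self-containedness and an explicit rate. You need only the finite nets of \Cref{thm:effective-compactness}(i), because you supply the modulus of continuity yourself. For an arbitrary computable function that modulus is not given. It must be extracted from computable compactness, for instance via the covering characterization in \Cref{thm:effective-compactness}(ii), which is the real content of \Cref{fact:effective-compactness-closure}(ii). The paper's route buys brevity.

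Your closing remark is also right. Applying the fact on the full product $I\times\mathcal P_M(K_k)^2\times S(\F^k)$ would yield a single global number, not the function $D$.
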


\begin{proof}
By the preceding integral computation,
$|\int\phi_{r,z}\,\dd(\mu-\nu)|$ is computable uniformly.  Since $S(\F^k)$ is uniformly computably compact,
its maximum $D$ is computable by
\Cref{fact:effective-compactness-closure}(ii).
\end{proof}

\begin{lemma}\label{lem:effective-inverse}
Given $k\geq1$ and positive rationals $M,\eta$, one can compute
a positive rational $\tau=\tau(I,k,M,\eta)$ such that
$D(r,\mu,\nu)<\tau$ implies $W_1(\mu,\nu)<\eta$
for all $r\in I$ and $\mu,\nu\in\mathcal P_M(K_k)$.
\end{lemma}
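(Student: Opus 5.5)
The plan is to combine a compactness argument with the effective search principle of \Cref{thm:effective-compactness}. Put $X=I\times\mathcal P_M(K_k)^2$, which is computably compact uniformly in $k,M$ by \Cref{lem:measure-compact} and \Cref{fact:effective-compactness-closure}(i). Consider the closed set
\[
 Z_\eta=\{(r,\mu,\nu)\in X: W_1(\mu,\nu)\geq\eta\}.
\]
\textbf{Claim.} $D>0$ on $Z_\eta$. Indeed, if $D(r,\mu,\nu)=0$ then $\mathcal C_r(\mu|_{Q_k})=\mathcal C_r(\nu|_{Q_k})$ on the unit sphere, hence on all of $\F^k$ by $r$-homogeneity. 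Since $r\in I\subset(1,2)$, \Cref{lem:cosine-injective} gives $\mu|_{Q_k}=\nu|_{Q_k}$. Both measures have total mass $M$, so $\mu(\{\partial\})=\nu(\{\partial\})$ as well. Therefore $\mu=\nu$ and $W_1(\mu,\nu)=0<\eta$.

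By compactness and continuity of $D$ (\Cref{prop:D-computable}), $\min_{Z_\eta}D>0$, so some positive rational $\tau$ works. The remaining issue is to find such a $\tau$ effectively.

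To do this, run a search over rationals $\tau=2^{-n}$ and enumerate the c.e.\ open cover
\[
 \{(r,\mu,\nu): W_1(\mu,\nu)<\eta\}\cup\{(r,\mu,\nu): D(r,\mu,\nu)>\tau/2\}.
\]
Both pieces are c.e.\ open, uniformly in the parameters. For the first, $W_1$ on $\mathcal P_M(K_k)$ is computable, being the metric of the computable Polish presentation. For the second, $D$ is computable. Each cover is presented as a c.e.\ union of basic balls of the product space; concretely, we enumerate basic balls whose closure is certified, via computable approximations of the two functions, to lie in the required region.

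For $\tau$ below $\min_{Z_\eta}D$ these two sets cover $X$, so \Cref{thm:effective-compactness}(ii), dovetailed over $n$, eventually halts on some $n$. Halting means the cover is verified, and we output $\tau=2^{-n-1}$. Then for any point with $D<\tau<2^{-n}/2\cdot 2$ — more precisely with $D\le 2^{-n}/2$ — the point is not in the second set, so it lies in the first, giving $W_1<\eta$. Outputting $\tau=2^{-n-1}$ makes this exact.

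I expect the main obstacle to be handling the uniformity in $r$ and the covering certification. One must check that $D$ is genuinely computable as a function of $r$ on the whole of $I$, which \Cref{prop:D-computable} supplies. One must also check that the open sets are c.e.\ uniformly, which requires converting the computability of $W_1$ and $D$ into an enumeration of basic balls. This conversion is standard: enumerate a ball $B$ whenever a rational approximation together with the Lipschitz or modulus data proves the inequality throughout $B$. Alternatively, it suffices to enumerate preimages of open rational intervals under computable functions. The mathematical heart, positivity of $D$ off the diagonal, is exactly \Cref{lem:cosine-injective} together with the mass bookkeeping at the cemetery point $\partial$.
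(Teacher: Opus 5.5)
Your proof is correct and follows the paper's argument. The key step, $D>0$ whenever $\mu\neq\nu$ via homogeneity, \Cref{lem:cosine-injective}, and the mass bookkeeping at $\partial$, is the same, and your dovetailed cover search is a valid alternative to the paper's shortcut. The paper instead observes that $H=D+\max\{\eta-W_1,0\}$ is a computable, everywhere-positive function on $I\times\mathcal P_M(K_k)^2$, so its minimum $m$ is computable by \Cref{fact:effective-compactness-closure}(ii), and any rational $\tau\in(0,m)$ works.
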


\begin{proof}
If $D(r,\mu,\nu)=0$, the cosine transforms of $\mu|_{Q_k}$ and
$\nu|_{Q_k}$ agree on $S(\F^k)$.  For every finite measure $\sigma$
on $Q_k$ and $z\ne0$,
$(\mathcal C_r\sigma)(z)
 =\|z\|_2^r(\mathcal C_r\sigma)
   \left(\frac{z}{\|z\|_2}\right)$.
Thus the transforms agree on all of $\F^k$, and
\Cref{lem:cosine-injective} gives $\mu|_{Q_k}=\nu|_{Q_k}$.
Their common total mass $M$ then gives $\mu=\nu$.
Consequently the computable function
\[
 H(r,\mu,\nu)=D(r,\mu,\nu)
              +\max\{\eta-W_1(\mu,\nu),0\}
\]
is strictly positive on the computably compact space
$I\times\mathcal P_M(K_k)^2$.  Its minimum $m$ is therefore positive
and computable by \Cref{fact:effective-compactness-closure}(ii).
Compute a rational $0<\tau<m$.  Whenever $W_1(\mu,\nu)\geq\eta$,
we have $D(r,\mu,\nu)=H(r,\mu,\nu)\geq m>\tau$.
\end{proof}

We now associate with each tuple in $L^r[0,1]^k$ a finite measure
that records its projective directions and their $L^r$-mass.
Let $f=(f_0,\ldots,f_{k-1})\in L^r[0,1]^k$.  Choose Borel
representatives, write $\mathbf f(t)=(f_0(t),\ldots,f_{k-1}(t))$,
and put $A_f=\{t\in[0,1]:\mathbf f(t)\ne0\}$.
The \emph{projective spectral measure} $\sigma_f$ is the pushforward
of $\|\mathbf f(t)\|_2^r\dd\lambda(t)$ on $A_f$ under the Borel map $t\mapsto\left[\frac{\mathbf f(t)}{\|\mathbf f(t)\|_2}\right]$.
Changing the Borel representatives does not change $\sigma_f$.
Since $1<r<2$,
\[
 \|\sigma_f\|=\sigma_f(Q_k)
 =\int_{A_f}\|\mathbf f(t)\|_2^r\dd\lambda(t)
 \leq\sum_{i<k}\|f_i\|_r^r<\infty.
\]
For every $z\in\F^k$, the definition gives
\begin{equation}\label{eq:spectral-norm}
 (\mathcal C_r\sigma_f)(z)
 =\int_{A_f}|z\cdot\mathbf f(t)|^r\dd\lambda(t)
 =\left\|\sum_{i<k}z_if_i\right\|_r^r.
\end{equation}

Given $M>0$ with $\|\sigma_f\|\leq M$, define its \emph{fixed-mass
cemetery extension} by
\[
 \widehat\sigma_f^{\,M}
 =\sigma_f+(M-\|\sigma_f\|)\delta_\partial
 \in\mathcal P_M(K_k),
\]
where $\delta_\partial$ is the Dirac measure at $\partial$.

\begin{lemma}
\label{lem:quantitative-coupling}
Let $k\geq1$, $1<r<2$, $M,\eta>0$, 
$f,g\in L^r[0,1]^k$, and suppose
$\|\sigma_f\|,\|\sigma_g\|\leq M$.  If
$W_1(\widehat\sigma_f^{\,M},\widehat\sigma_g^{\,M})<\eta$,
then there is $U\in\Iso(L^r[0,1])$ such that
$\max_{i<k}\|Uf_i-g_i\|_r^r<2\eta$.
\end{lemma}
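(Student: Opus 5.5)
Take an optimal (or nearly optimal, with strict inequality preserved) coupling $\pi\in\Pi(\widehat\sigma_f^{\,M},\widehat\sigma_g^{\,M})$ with $\int d\,\dd\pi<\eta$. Such a $\pi$ exists because $K_k$ is compact and the cost $d$ is continuous. The goal is to realize $\pi$ as a single atomless probability space carrying two $k$-tuples $F,G$ with the following properties: $F$ has the same joint distribution data as $f$, $G$ has the same data as $g$, and $\|F_i-G_i\|_r^r$ is controlled by $\int d\,\dd\pi$.

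Concretely, on $\Omega=K_k\times K_k\times[0,1]$ with the measure $\pi\otimes\lambda$, normalized to a probability if needed (total mass $M$), fix a Borel selection $[u]\mapsto u\in S(\F^k)$. Working on the pair $(p,q)$ of projective points, choose unit representatives $u,v$ with $\|u-v\|_2=d_Q(p,q)$; this is a measurable choice of the phase $\theta$. Then set
\[
F=(M')^{1/r}u\,\mathbf 1[p\ne\partial],\qquad G=(M')^{1/r}v\,\mathbf 1[q\ne\partial],
\]
with the constant chosen so that the spectral measures come out right, i.e.\ $\sigma_F=\sigma_f$ and $\sigma_G=\sigma_g$. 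This uses that the first marginal of $\pi$ on $Q_k$ is $\sigma_f$ and the density $\|F\|_2^r$ is constant. The estimate then goes as follows. If $p,q\ne\partial$, then $|F_i-G_i|^r\le c\|u-v\|_2^r\le c\,d(p,q)$, using $\|u-v\|_2\le\sqrt2$ and $r>1$; the constants need tracking so that the final bound is $2\eta$. If exactly one of $p,q$ is $\partial$, then $|F_i-G_i|^r\le c=c\,d(p,q)$. Integrating over $\pi$ gives $\max_i\|F_i-G_i\|_r^r\le 2\int d\,\dd\pi<2\eta$, once the scaling between mass and $\|\cdot\|_2^r$ is arranged. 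I expect the factor $2$ to come from bounding $\|u-v\|_2^r\le 2^{(r-1)/2}\|u-v\|_2$, or from the cemetery term.

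Next, transfer to the original tuples. By \eqref{eq:spectral-norm}, every linear combination satisfies $\|\sum z_iF_i\|_r=\|\sum z_if_i\|_r$, because $\sigma_F=\sigma_f$. Hence $f_i\mapsto F_i$ extends to a linear isometry $\Phi$ from $\operatorname{span}f$ onto $\operatorname{span}F$, and similarly $\Psi$ from $\operatorname{span}g$ onto $\operatorname{span}G$. The space $\Omega$ is standard Borel and carries an atomless measure (atomless because of the $[0,1]$ factor), so by \Cref{lem:rokhlin-model} it admits an isometry $J:L^r(\Omega)\to L^r[0,1]$. One then needs some $U\in\Iso(L^r[0,1])$ with $Uf_i$ close to $JF_i$, and likewise for $g$. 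Since $J\Phi$ and the inclusion are two isometric embeddings of the same finite-dimensional space $E=\operatorname{span}f$, \Cref{thm:lp-auh} gives $U_1$ with $\|U_1f_i-JF_i\|$ arbitrarily small, and similarly $U_2$ for $g$. Taking $U=U_2^{-1}U_1$ and using the triangle inequality in $L^r$, the strict inequality $\int d\,\dd\pi<\eta$ leaves room for the small errors, which are absorbed by the $r$th-power quasi-triangle inequality with a slight slack. (Alternatively, Plotkin--Rudin equimeasurability could give exact isometries extending $\Phi$, but approximate ultrahomogeneity suffices.)

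The main obstacle is the measurable construction and the constant bookkeeping. First, one needs a Borel choice of representatives $u,v$ realizing $d_Q$; this follows from a measurable selection theorem on the compact set of minimizing phases. Second, the weighting must be arranged so that $F$ reproduces $\sigma_f$ while the pointwise bound $|F_i-G_i|^r\le 2\,d(p,q)$ holds with mass normalization consistent with $\pi$. For the second point I would try $\Omega=K_k^2$ with measure $\pi$ itself (mass $M$, made atomless by the product with $[0,1]$), $F=u\mathbf 1[p\ne\partial]$ and $G=v\mathbf 1[q\ne\partial]$. Then $\|F(\omega)\|_2^r=1$, so $\sigma_F$ is exactly the $Q_k$-marginal $\sigma_f$, and $|F_i-G_i|\le\|u-v\|_2$ with $\|u-v\|_2^r\le\sqrt2^{\,r-1}\|u-v\|_2<2d$. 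The cemetery case also gives $|F_i-G_i|^r\le1=d$. Finally, $L^r$ of a finite atomless measure is still isometric to $L^r[0,1]$ after rescaling the measure.
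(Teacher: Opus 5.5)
Your proposal is correct and follows essentially the same route as the paper. Both arguments take a coupling with cost below $\eta$ on $K_k\times K_k\times[0,1]$, align phases measurably, and bound $|F_i-G_i|^r$ pointwise by a multiple of $d$. Both then identify with $L^r[0,1]$ via the Rokhlin model and apply Lusky's approximate ultrahomogeneity twice, composing as $U_2^{-1}U_1$. The only differences are cosmetic: your constant $2^{(r-1)/2}$ is sharper than the paper's $2^{r-1}$, and you invoke a measurable selection theorem where the paper writes the phase explicitly as $\zeta=\overline{\alpha}/|\alpha|$.
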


\begin{proof}
Choose a coupling $\pi$ of the two cemetery extensions such that
\[
 c=\int d(x,y)\,\dd\pi(x,y)<\eta.
\]
Choose a Borel section $s:Q_k\to S(\F^k)$ by selecting the unit
representative whose first nonzero coordinate is positive real,
and extend it to $K_k$ by $s(\partial)=0$.
For $x,y\in K_k$, put
\[
 \alpha(x,y)=\sum_{i<k}s_i(x)\overline{s_i(y)},\qquad
 \zeta(x,y)=
 \begin{cases}
  \overline{\alpha(x,y)}/|\alpha(x,y)|,&\alpha(x,y)\ne0,\\
  1,&\alpha(x,y)=0.
 \end{cases}
\]
This Borel phase belongs to $G_\F$.  If $x,y\in Q_k$, then
\[
 \|\zeta(x,y)s(x)-s(y)\|_2^2
 =2-2|\alpha(x,y)|=d_Q(x,y)^2.
\]
Since $d_Q\leq\sqrt2<2$, it follows that
\begin{equation}\label{eq:coupling-pointwise}
 \|\zeta(x,y)s(x)-s(y)\|_2^r
 \leq 2^{r-1}d(x,y)
 \qquad(x,y\in K_k).
\end{equation}

The space
\[
 (\Omega,\mathbb P)
 =\bigl(K_k\times K_k\times[0,1],\,M^{-1}\pi\otimes\lambda\bigr)
\]
is a standard Borel probability space, and the Lebesgue factor
makes its measure atomless.  On this space define
\[
 F_i(x,y,t)=M^{1/r}\zeta(x,y)s_i(x),\qquad
 G_i(x,y,t)=M^{1/r}s_i(y)
 \qquad(i<k).
\]
By \Cref{lem:rokhlin-model}, identify $L^r(\Omega,\mathbb P)$
with $L^r[0,1]$, using the same identification for all $F_i,G_i$.
This preserves the norms of all linear combinations and the distances
$\|F_i-G_i\|_r$, allowing us to apply the approximate ultrahomogeneity
of $L^r[0,1]$ below.
For every $z\in\F^k$, the first marginal of $\pi$ and
\eqref{eq:spectral-norm} give
\[
 \begin{aligned}
 \left\|\sum_{i<k}z_iF_i\right\|_r^r
 =\int_{K_k\times K_k}|z\cdot s(x)|^r\,\dd\pi(x,y)
 =\int_{K_k}|z\cdot s(x)|^r\,\dd\widehat\sigma_f^{\,M}(x)
 =\left\|\sum_{i<k}z_if_i\right\|_r^r.
 \end{aligned}
\]
The second marginal gives the analogous identity for $G$ and $g$.
Then
\eqref{eq:coupling-pointwise} yields
\[
 \begin{aligned}
 \|F_i-G_i\|_r^r
 &=\int_{K_k\times K_k}
   |\zeta(x,y)s_i(x)-s_i(y)|^r\,\dd\pi(x,y)\\
 &\leq\int_{K_k\times K_k}
   \|\zeta(x,y)s(x)-s(y)\|_2^r\,\dd\pi(x,y)
\leq 2^{r-1}c.
 \end{aligned}
\]

The norm identities show that $f_i\mapsto F_i$ extends to a
linear isometric embedding of $\operatorname{span}\{f_i:i<k\}$
into $L^r[0,1]$, and similarly for $g_i\mapsto G_i$.
Consequently \Cref{thm:lp-auh} implies that, for every $u>0$,
there are $V,W\in\Iso(L^r[0,1])$ with
\[
 \max_{i<k}\|Vf_i-F_i\|_r<u,
 \qquad
 \max_{i<k}\|Wg_i-G_i\|_r<u.
\]

Put $\rho=(2^{r-1}c)^{1/r}$.  Since $c<\eta$ and $r<2$, we have
$\rho<(2\eta)^{1/r}$.  Choose $u>0$ such that
$2u+\rho<(2\eta)^{1/r}$ and set $U=W^{-1}V$.  Then, for every $i<k$,
\[
 \begin{aligned}
 \|Uf_i-g_i\|_r
 &=\|Vf_i-Wg_i\|_r\\
 &\leq\|Vf_i-F_i\|_r+\|F_i-G_i\|_r+\|G_i-Wg_i\|_r
 <2u+\rho<(2\eta)^{1/r}.
 \end{aligned}
\]
Thus $U\in\Iso(L^r[0,1])$ has the required property.
\end{proof}

\begin{proof}[Proof of \Cref{thm:uniform-stability}]
Given $k\geq1$ and $\eps\in\Q_{>0}$, put
\[
 s=\min\{1,\eps/k\},\qquad \eta=s^2/8,
\]
and use \Cref{lem:effective-inverse} with $M=4k$ to compute
$\tau>0$ such that $D<\tau$ implies $W_1<\eta$.  Define
\[
 \delta:=\min\{1,\tau/(10k)\},\qquad
 \omega_I(k,\eps):=\delta/2.
\]

Fix $r\in I$, $E\in\Age_k(L^r[0,1])$, and
$\gamma_0,\gamma_1\in\Emb^{\log}_{\omega_I(k,\eps)}(E,L^r[0,1])$.
Since $0<\delta\leq1$ and $\delta/2<\log(1+\delta)$, for
$j=0,1$ and every $x\in E$ we have
\[
 (1+\delta)^{-1}\|x\|\leq\|\gamma_jx\|
 \leq(1+\delta)\|x\|.
\]
Choose an Auerbach basis $(x_i)_{i<k}$ and put
$f_i=\gamma_0x_i$, $g_i=\gamma_1x_i$.  Since the Auerbach basis
is normalized, $\|x_i\|_E=1$, so the upper bound on $\gamma_0$
gives $\|f_i\|_r\leq1+\delta$ for every $i<k$.
Moreover, $0<r/2<1$, so subadditivity of $u\mapsto u^{r/2}$
on $[0,\infty)$ gives, pointwise, $\|\mathbf f(t)\|_2^r
 =\left(\sum_{i<k}|f_i(t)|^2\right)^{r/2}
 \leq\sum_{i<k}|f_i(t)|^r$.
Integrating and using the definition of $\sigma_f$, we obtain
\[
 \begin{aligned}
 \|\sigma_f\|
 =\int_{A_f}\|\mathbf f(t)\|_2^r\,\dd\lambda(t)
 \leq\sum_{i<k}\int_0^1|f_i(t)|^r\,\dd\lambda(t)
 \leq k(1+\delta)^r\leq k2^r\leq4k,
 \end{aligned}
\]
where the last two inequalities use $\delta\leq1$ and $r<2$.
The same argument applies to $g$, using the upper bound on $\gamma_1$.
Thus their cemetery extensions of mass $4k$ are defined.

For $z\in S(\F^k)$, put $x_z=\sum_i z_ix_i$.  By
\Cref{lem:auerbach}, $\|x_z\|_E\leq\sum_i|z_i|\leq\sqrt{k}$.
By \eqref{eq:spectral-norm} and the bounds on $\gamma_0,\gamma_1$,
both $(\mathcal C_r\sigma_f)(z)$ and $(\mathcal C_r\sigma_g)(z)$
lie between $(1+\delta)^{-r}\|x_z\|_E^r$ and
$(1+\delta)^r\|x_z\|_E^r$.  Since $\phi_{r,z}(\partial)=0$ and
$1<r<2$, taking the supremum over $z\in S(\F^k)$ gives
\[
 \begin{aligned}
 D(r,\widehat\sigma_f^{\,4k},\widehat\sigma_g^{\,4k})
 &\leq k^{r/2}\bigl((1+\delta)^r-(1+\delta)^{-r}\bigr)\\
 &\leq k\bigl((1+\delta)^2-(1+\delta)^{-2}\bigr)\\
 &\leq5k\delta\leq\tau/2<\tau,
 \end{aligned}
\]
here $(1+\delta)^2-1\leq3\delta$ and
$1-(1+\delta)^{-2}\leq2\delta$ for $0\leq\delta\leq1$.
The inverse modulus and \Cref{lem:quantitative-coupling} therefore
give $U\in\Iso(L^r[0,1])$ with
$\|Uf_i-g_i\|_r^r<2\eta$ for all $i<k$.  Since
$2\eta=s^2/4<1$ and $r<2$,
\[
 \|U\gamma_0-\gamma_1\|
 \leq k\max_{i<k}\|Uf_i-g_i\|_r
 <k(2\eta)^{1/r}
 \leq k\sqrt{2\eta}=ks/2<\eps.
\]
This proves the theorem.
\end{proof}

\section{An effective varying-exponent Fra\"{\i}ss\'e-style construction}
\label{sec:proof-main}

Fix a right-c.e. real $p\in(1,2)$, rationals $1<a<p<b<2$, and a
computable nonincreasing sequence of rationals $q_m\downarrow p$ in
$I=[a,b]$.  Using the stability modulus from
\Cref{sec:banach-analysis}, we construct
$X_m=\ell_{q_m}^{d_m}$ with rational bonding maps
$I_m:X_m\to X_{m+1}$.  Here an operator between spaces with specified
coordinate bases is \emph{rational} if its matrix has entries in
$\Q_\F$.  Summable distortions will give a computable limiting norm
and a completion $B$.

There are two kinds of requirements.  Copy requirements embed each member of a
Banach--Mazur dense family $(D_e)_e\subseteq\Age(L^p[0,1])$ into
$X_m$ with vanishing distortion as $m\to\infty$.  Pair requirements supply,
for sufficiently undistorted $\gamma,\eta:D_e\to X_m$, an alternative
map $J:X_m\to X_{m+1}$ with $J\gamma\approx I_m\eta$.
We use these
alignments to build an approximate intertwining between a cofinal
subsequence of the renormed spaces $\bar X_m$ and finite-dimensional subspaces
of $L^p[0,1]$.  Its limit is a surjective linear isometry
$B\to L^p[0,1]$.
We adapt the Fra\"{\i}ss\'e-sequence construction in
\cite[proof of Theorem~2.26]{FLAMT} to varying exponents, using effective searches and summable distortions to obtain a computable limiting norm.
The next lemma aligns finitely many pairs while embedding a common
base space into the next exponent.

\begin{lemma}
\label{lem:finite-pair-package}
Let $r',r\in I\cap\Q$ with $r'\leq r$, and let $Y$ and a family $(E_i)_{1\leq i\leq s}$ be nonzero effective
finite-dimensional spaces in $\Age(L^r[0,1])$.
Fix positive rationals $\theta,\xi_i,\beta_i$ with
$\beta_i<\omega_I(\dim E_i,\xi_i/2)$.
One can effectively find $N\geq1$, a rational
operator $\Phi\in\Emb^{\log}_{\theta}(Y,\ell_{r'}^N)$,
and finite sets $\mathscr J_i$ of rational operators in
$\Emb^{\log}_{\theta}(Y,\ell_{r'}^N)$ such that for all $\gamma,\eta\in\Emb^{\log}_{\beta_i}(E_i,Y)$, there is $J\in\mathscr J_i$ such that $\|J\gamma-\Phi\eta\|<\xi_i$.
\end{lemma}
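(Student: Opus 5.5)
We would split the proof into two parts. The first is an existence argument: for each $i$ and each pair $(\gamma,\eta)$ some good $J$ exists. The second is an effective compactness argument: it reduces the uncountable family of pairs to a finite, effectively found set of witnesses $\mathscr J_i$.

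\textbf{Existence.} By \Cref{thm:stable-variable-embedding}, $Y\in\Age(L^r[0,1])\subseteq\Age(L^{r'}[0,1])$, so $Y$ embeds isometrically into $L^{r'}[0,1]$ by some $\Psi$. Fix $\gamma,\eta\in\Emb^{\log}_{\beta_i}(E_i,Y)$. Composing with $\Psi$ and using \Cref{lem:distortion-calculus}, the maps $\Psi\gamma$ and $\Psi\eta$ lie in $\Emb^{\log}_{\beta_i}(E_i,L^{r'}[0,1])$, where $E_i$ is viewed via $\eta$ as a subspace of $L^{r'}$ up to small distortion. Here we need a small adjustment, since \Cref{thm:uniform-stability} requires $E_i\in\Age_k(L^{r'}[0,1])$ exactly. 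We would replace $E_i$ by the space $\Psi\eta(E_i)$ with the maps $\Psi\gamma\eta^{-1}$ and the inclusion. Their distortions are at most $2\beta_i$, so we should take $\beta_i$ with a margin or absorb the factor into the strict inequality $\beta_i<\omega_I$. The cleanest fix is to use that $\Emb^{\log}$ is open together with the strict inequality; alternatively one can note that $E_i\in\Age(L^r)\subseteq\Age(L^{r'})$ directly. Then \Cref{thm:uniform-stability} at exponent $r'\in I$ gives $U\in\Iso(L^{r'}[0,1])$ with $\|U\Psi\gamma-\Psi\eta\|<\xi_i/2$. Next we discretize using \Cref{thm:dyadic-approximation}. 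A dyadic projection $P_n$ is nearly isometric on the finite-dimensional spaces $\Psi(Y)$ and $U\Psi(Y)$, and its range is $\ell_{r'}^{2^n}$. Perturbing entries to rationals, we obtain rational operators $\Phi\approx P_n\Psi$ and $J\approx P_nU\Psi$ in $\Emb^{\log}_\theta(Y,\ell_{r'}^N)$ with $\|J\gamma-\Phi\eta\|<\xi_i$.

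\textbf{Effectivity and finiteness.} The main issue is that $U$, $\Psi$ and $n$ are not computed; we only know they exist, and $\Phi$ must be chosen uniformly for all $i$ and all pairs. The search is organized as follows. Enumerate candidate triples $(N,\Phi,(\mathscr J_i)_i)$ of rational data. The condition $\Phi,J\in\Emb^{\log}_\theta$ is c.e. by \Cref{lem:effective-operator-extrema}, since $\ldis$ is computable from $\|T\|$ and $m(T)$. For the covering condition, note that the set $K_i=\{(\gamma,\eta):\ldis\le\beta_i'\}$ is computably compact. It is a closed set of matrices defined by computable $\|\cdot\|,m(\cdot)$ inside a bounded region, and we pass to a rational $\beta_i'$ slightly above $\beta_i$ that is still below $\omega_I$, which is possible by strictness. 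We would present $K_i$ as a computably compact space via \Cref{thm:effective-compactness}. The family of open sets $\{(\gamma,\eta):\|J\gamma-\Phi\eta\|<\xi_i\}$, $J\in\mathscr J_i$, is c.e. open, and by the existence argument applied with this fixed $\Phi$, these sets cover $K_i$. Hence \Cref{thm:effective-compactness}(ii) lets us confirm coverage by a finite subfamily.

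\textbf{Main obstacle.} The hard part is guaranteeing that a single $\Phi$ works for every $J$-witness simultaneously. In the existence step, $P_n$ depends on $U$, since it must be nearly isometric on $U\Psi(Y)$. The fix is to go in the opposite order. First use compactness of $K_i$ and the existence of $U$ to find finitely many isometries $U_1,\dots,U_m$ whose images cover $K_i$ with error $\xi_i/2$. Then choose $n$ large enough that $P_n$ is nearly isometric on the finite-dimensional span of all the $U_j\Psi(Y)$, and set $\Phi\approx P_n\Psi$ and $J_j\approx P_nU_j\Psi$. This proves that a valid finite package exists. Its discovery is effective because the search over rational packages halts once the c.e. verification succeeds.
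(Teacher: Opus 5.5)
Your argument is correct, but it is organized differently from the paper's.

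\textbf{The paper's route.} It works in two phases. First it applies \Cref{thm:effective-compactness}(ii) to a rational box $\mathscr B_i\supseteq K_i$, with $\mathscr B_i\setminus K_i$ c.e.\ open. This computes finite nets $\mathcal N_i$ of rational centers in $\Emb^{\log}_{\widehat\beta_i}(E_i,Y)$ of mesh $\rho_i=\xi_i/(4v)$. Only then does it search for $\Phi$ and $J_{i,c,d}$ satisfying the finitely many strict inequalities $\|J_{i,c,d}c-\Phi d\|<\xi_i/2$ at the centers. Termination comes from applying \Cref{thm:uniform-stability} at exponent $r$ to the finitely many pairs $(hc,hd)$, transporting by $S\in\Emb(L^r,L^{r'})$, and projecting. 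Arbitrary $\gamma,\eta$ are handled at the end, at cost $2e^\theta\rho_i$.

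\textbf{Your route.} You search over complete packages and, for each candidate, semidecide whether the whole compact set of pairs is covered. You prove termination by a classical compactness argument that selects finitely many $U_j$ before fixing the dyadic projection. You also apply stability directly at $r'$, which is legitimate since $r'\in I$ and $E_i\in\Age(L^r)\subseteq\Age(L^{r'})$.

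\textbf{Trade-offs.} The paper's route keeps the search step purely finitary and gets finiteness of the set of isometries for free from the nets, at the cost of the $\widehat\beta_i,\rho_i,v$ bookkeeping. Yours dispenses with nets and those margins. In exchange, it puts an effective-compactness test inside the verification of every candidate and needs a separate existence argument.

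\textbf{Points to clean up.}
\begin{itemize}
\item The detour through $\Psi\eta(E_i)$ with distortion $2\beta_i$ fails, since nothing makes $2\beta_i<\omega_I$. It is also unnecessary, because \Cref{thm:uniform-stability} only asks that $E_i$ lie in the age.
\item Your sentence claiming that the existence argument makes the sets $\{\|J\gamma-\Phi\eta\|<\xi_i\}$ cover $K_i$ for a candidate's fixed $\Phi$ is false as stated. What termination needs, and what your last paragraph actually proves, is only that some rational package passes the test.
\item You do not need $K_i$ to be computably compact in its own right. As in the paper, it suffices that $K_i$ is co-c.e.\ closed in an effectively found computably compact box; then covering it by a c.e.\ open family is semidecidable.
\end{itemize}
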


\begin{proof}
Put $\omega_i=\omega_I(\dim E_i,\xi_i/2)$ and
$\widehat\beta_i=(\beta_i+\omega_i)/2$.  Choose a rational
$v>e^\theta$ and set $\rho_i=\xi_i/(4v)$.
We first find finitely many rational centers whose open
$\rho_i$-balls cover
\[
 K_i=\{T\in\mathcal L(E_i,Y):
           \|T\|\leq e^{\beta_i},\ m(T)\geq e^{-\beta_i}\}.
\]

For matrices over $\F$, a \emph{rational coordinate box} is a product
of bounded closed intervals with rational endpoints in the matrix
coordinates.  When $\F=\mathbb C$, the real and imaginary parts of
each entry are treated as separate coordinates.  It is clear by \Cref{thm:effective-compactness}(i) that rational
coordinate boxes are computably compact in the operator-norm metric.
Choose a rational coordinate box $\mathscr B_i\supseteq K_i$;
this is effective since every matrix entry of $T\in K_i$ has modulus
at most $e^{\beta_i}b_{E_i}/a_Y$ by \eqref{eq:effective-norm-comparison}.
The computability of $\|T\|$ and $m(T)$, given by
\Cref{lem:effective-operator-extrema}, makes
$\mathscr B_i\setminus K_i$ a c.e. open subset of $\mathscr B_i$.
Enumerate finite sets $\mathcal N_i$ of rational centers in
$\Emb^{\log}_{\widehat\beta_i}(E_i,Y)$ such that 
$\mathscr B_i\subseteq
 (\mathscr B_i\setminus K_i)
 \cup\bigcup_{c\in\mathcal N_i}B(c,\rho_i)$
using \Cref{thm:effective-compactness}(ii).
This search terminates: $K_i$ is compact and lies in the open set
$\Emb^{\log}_{\widehat\beta_i}(E_i,Y)$, whose rational points are
dense. 

Having found these nets, search over $N$ and rational
matrices $\Phi,J_{i,c,d}:Y\to\ell_{r'}^N$ in $\Emb^{\log}_{\theta}(Y,\ell_{r'}^N)$ such that
$\|J_{i,c,d}c-\Phi d\|<\xi_i/2
 \quad(c,d\in\mathcal N_i)$.
All these conditions are semidecidable.  To prove termination,
choose $h\in\Emb(Y,L^r[0,1])$.  By
\Cref{thm:uniform-stability}, for each $c,d\in\mathcal N_i$ there is
$U_{i,c,d}\in\Iso(L^r[0,1])$ with $\|U_{i,c,d}hc-hd\|<\xi_i/2$.
Compose $h$ and the finitely many $U_{i,c,d}h$ with
$S\in\Emb(L^r[0,1],L^{r'}[0,1])$, supplied by
\Cref{thm:stable-variable-embedding}.
Let $W$ be the span of their ranges.  A sufficiently fine dyadic
conditional expectation $P$ satisfies
$\|(P-\id)\restr W\|<1-e^{-\theta/2}$ by
\Cref{thm:dyadic-approximation}.
Put $\delta=\|(P-\id)\restr W\|$.  Each of the maps
$V=Sh$ and $V=SU_{i,c,d}h$ belongs to $\Emb(Y,W)$.  Since $P$ is contractive, $\|PV\|\leq1$.  Moreover, for every
$x\in Y$ we have
\[
 \begin{aligned}
  \|PVx\|&\geq\|Vx\|-\|(P-\id)Vx\|
           \geq(1-\delta)\|x\|.
 \end{aligned}
\]
Thus $PV$ is an isomorphic embedding and
$\ldis(PV)\leq-\log(1-\delta)<\theta/2$.  Moreover, the contractivity of $P$
and the isometricity of $S$ give
\[
 \|PSU_{i,c,d}hc-PShd\|
 \leq\|U_{i,c,d}hc-hd\|<\xi_i/2.
\]
Identify $\operatorname{ran}P$ isometrically with a
finite $\ell_{r'}$-space by
\Cref{thm:dyadic-approximation}.  These maps satisfy the proposed certificate with
strict margins, so sufficiently close rational matrices do as well.

Set $\mathscr J_i=\{J_{i,c,d}:c,d\in\mathcal N_i\}$.  Given
$\gamma,\eta\in\Emb^{\log}_{\beta_i}(E_i,Y)$, choose net centers
$c,d$ within $\rho_i$ of $\gamma,\eta$.  Then
$\|J_{i,c,d}\gamma-\Phi\eta\|
 <2e^\theta\rho_i+\xi_i/2<\xi_i$.
\end{proof}

We now specify the requirements and construct the limit.
Enumerate all tuples $(s,k,N,A)$ with $s\in\N$, $1\leq k\leq N$,
and $A\in M_{N\times k}(\Q_\F)$ of rank $k$, arranging that
$s(e)\leq e$.  Set
\begin{equation}\label{eq:De-definition}
 D_e=\left(\F^{k(e)},\ x\mapsto
          \|A_ex\|_{\ell_{q_{s(e)}}^{N(e)}}\right).
\end{equation}
This is a uniformly effective family of finite-dimensional spaces.

\begin{lemma}\label{lem:D-dense}
Every $D_e$ belongs to $\Age(L^p[0,1])$, and the family
$(D_e)_e$ is Banach--Mazur dense in every fixed-dimensional
part of $\Age(L^p[0,1])$.
\end{lemma}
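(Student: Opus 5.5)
The plan has two parts: membership and density.

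\emph{Membership.} Each $D_e$ is linearly isometric to the subspace $A_e(\F^{k(e)})$ of $\ell_{q_{s(e)}}^{N(e)}$. This is a genuine $k(e)$-dimensional normed space because $A_e$ has rank $k(e)$. Via \eqref{eq:normalized-step-embedding} it embeds isometrically into $L^{q_{s(e)}}[0,1]$, so $D_e\in\Age(L^{q_{s(e)}}[0,1])$. Since $p\le q_{s(e)}<2$ and $p>1$, \Cref{thm:stable-variable-embedding} with $u=p$, $v=q_{s(e)}$ gives $\Age(L^{q_{s(e)}}[0,1])\subseteq\Age(L^p[0,1])$. Hence $D_e\in\Age(L^p[0,1])$.

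\emph{Density.} Fix $k$, $E\in\Age_k(L^p[0,1])$ and $\eta>0$. We must find $e$ with $k(e)=k$ and $\distBM(E,D_e)<\eta$.
\begin{enumerate}[label=(\arabic*)]
\item By \Cref{cor:finite-lr-approximation} with $r=p$, obtain $N\ge k$ and a $k$-dimensional $F\subseteq\ell_p^N$ with $\distBM(E,F)<\eta/3$.
\item Choose a basis of $F$, write it as the columns of a rank-$k$ matrix $A_0\in M_{N\times k}(\F)$, and perturb to a rational matrix $A$ close to $A_0$; rank $k$ is an open condition, so it is preserved. Then $x\mapsto\|Ax\|_p$ and $x\mapsto\|A_0x\|_p$ are within a multiplicative factor $e^{\eta/6}$ of each other on the unit sphere of $\F^k$. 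This uses the lower bound $\|A_0x\|_p\ge c\|x\|_2$ with $c>0$ together with $\|(A-A_0)x\|_p\le \|A-A_0\|\,\|x\|_2$. Consequently the identity map of $\F^k$ has small distortion between these two norms.
\item Vary the exponent. For fixed $N$, the norms $\|y\|_{q}$ and $\|y\|_p$ on $\F^N$ satisfy $N^{1/q-1/p}\|y\|_p\le\|y\|_q\le\|y\|_p$ for $q\ge p$. Since $q_s\downarrow p$, we can choose $s$ so large that $N^{1/p-1/q_s}<e^{\eta/6}$. Then the identity map $(\F^k,\|A\cdot\|_p)\to(\F^k,\|A\cdot\|_{q_s})$ has logarithmic distortion below $\eta/6$.
\item The tuple $(s,k,N,A)$ occurs in the enumeration as some $e$; the restriction $s(e)\le e$ only constrains the ordering, not which tuples appear. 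Combining the estimates through \Cref{lem:bm-versus-ldis} and the triangle inequality for $\distBM$ gives $\distBM(E,D_e)<\eta$.
\end{enumerate}

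\emph{Main obstacle.} The only delicate point is step (3): we approximate $L^p$-configurations by $\ell_{q_s}$-configurations with $q_s\neq p$. This works only because $N$ is fixed before $s$ is chosen, which makes the norm comparison uniform. The rational perturbation in step (2) is routine compactness on the Euclidean sphere.
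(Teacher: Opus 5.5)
Your proposal is correct and follows the paper's proof essentially step for step. For membership you use the normalized step embedding together with \Cref{thm:stable-variable-embedding} at $u=p\le v=q_{s(e)}$. For density you use \Cref{cor:finite-lr-approximation}, a full-rank rational perturbation, and the comparison $N^{1/q_s-1/p}\|y\|_p\le\|y\|_{q_s}\le\|y\|_p$ with $N$ fixed before $s$. The only differences are that you spell out the perturbation estimate and note that every tuple $(s,k,N,A)$ is enumerated, both of which the paper leaves implicit.
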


\begin{proof}
Membership follows from the normalized-indicator embedding
\eqref{eq:normalized-step-embedding} and
\Cref{thm:stable-variable-embedding}.
For density, write
$E_A^t=(\F^k,x\mapsto\|Ax\|_{\ell_t^N})$ for a full-rank matrix
$A\in M_{N\times k}(\F)$.  Given $E\in\Age_k(L^p)$ and $\eta>0$,
\Cref{cor:finite-lr-approximation} and a sufficiently small rational
perturbation of a basis matrix give a full-rank rational $A$ with
$\distBM(E,E_A^p)<\eta/2$. For $q_s \ge p$, the standard comparison of finite-dimensional $l_t$-norms gives, for every $x\in \F^k$,
\[
 \|Ax\|_{q_s}\leq\|Ax\|_p
 \leq N^{1/p-1/q_s}\|Ax\|_{q_s}.
\]Hence
$\distBM(E_A^{q_s},E_A^p)
 \leq(1/p-1/q_s)\log N\to 0$.
This proves the density.
\end{proof}

Define
\begin{equation}\label{eq:beta-choice}
 \beta_{k,h}=\frac12\min\{1,\omega_I(k,2^{-h-4})\}.
\end{equation}
Starting with $d_0=1$, we recursively construct
$X_m=\ell_{q_m}^{d_m}$ and rational bonding maps
$I_m:X_m\to X_{m+1}$.

Form the finite direct sum
\[
 Y_m=X_m\oplus_{q_m}D_0\oplus_{q_m}\cdots\oplus_{q_m}D_m,
\]
with its concatenated coordinate basis and norm
\[
 \|(x,u_0,\ldots,u_m)\|_{Y_m}
 =\left(\|x\|_{X_m}^{q_m}
       +\sum_{e=0}^{m}\|u_e\|_{D_e}^{q_m}\right)^{1/q_m}.
\]
This norm is computable uniformly in $m$.

To see that $Y_m\in\Age(L^{q_m}[0,1])$, first note that, for
$e\leq m$, the map $x\mapsto A_ex$ followed by
\eqref{eq:normalized-step-embedding} embeds $D_e$ isometrically into
$L^{q_{s(e)}}[0,1]$.  Since $s(e)\leq e\leq m$, we have
$q_m\leq q_{s(e)}$, so \Cref{thm:stable-variable-embedding}
gives an isometric embedding of $D_e$ into $L^{q_m}[0,1]$.
The space $X_m=\ell_{q_m}^{d_m}$ also embeds isometrically into
$L^{q_m}[0,1]$ by \eqref{eq:normalized-step-embedding}.
Partition $[0,1]$ into $m+2$ intervals of equal length.  By affine
changes of variable, multiplication by $(m+2)^{1/q_m}$, and extension
by zero, realize these embeddings on distinct intervals.  Their sum
is an isometric embedding of $Y_m$ into $L^{q_m}[0,1]$.

Write $a_m\in\Emb(X_m,Y_m)$ and $b_{e,m}\in\Emb(D_e,Y_m)$
for the rational coordinate inclusions.
Apply \Cref{lem:finite-pair-package} once, with base $Y_m$,
$r=q_m$, $r'=q_{m+1}$, $\theta=2^{-m-4}$, and one
pair requirement for each $(e,h)\in\{0,\ldots,m\}^2$,
using domain $D_e$, $\beta_{\dim D_e,h}$, and $\xi=2^{-h-3}$.
For the resulting $\Phi:Y_m\to\ell_{q_{m+1}}^N$, set
\[
 d_{m+1}=N,\quad X_{m+1}=\ell_{q_{m+1}}^N,\quad
 I_m=\Phi a_m,\quad C_{e,m}=\Phi b_{e,m}.
\]
Then $I_m\in\Emb^{\log}_{2^{-m-4}}(X_m,X_{m+1})$.
Applying the pair conclusion to
$a_m\gamma,a_m\eta$ and restricting each alternative map to $X_m$
gives the following two properties:

\begin{tcolorbox}[breakable,colback=white,colframe=black,
boxrule=0.5pt,arc=0pt,left=6pt,right=6pt,top=6pt,bottom=6pt]
\begin{description}[leftmargin=3.2em,style=nextline]
\item[$(P_m)$]
For $e,h\leq m$ and
$\gamma,\eta\in\Emb^{\log}_{\beta_{\dim D_e,h}}(D_e,X_m)$,
there is a rational $J\in\Emb^{\log}_{2^{-m-4}}(X_m,X_{m+1})$ such that
$\|J\gamma-I_m\eta\|<2^{-h-3}$.
\item[$(C_m)$]
For each $e\leq m$, there is a rational
$C_{e,m}\in\Emb^{\log}_{2^{-m-4}}(D_e,X_{m+1})$.
\end{description}
\end{tcolorbox}

Write $I_{m,t}=I_{t-1}\cdots I_m$ for $t>m$ and $I_{m,m}=\id$.
For $x\ne0$, successive terms of $\log\|I_{m,t}x\|_{X_t}$ differ
by less than $2^{-t-4}$.  Thus
\begin{equation}\label{eq:limit-norm}
 \|x\|_\infty=\lim_{t\to\infty}\|I_{m,t}x\|_{X_t}
\end{equation}
defines a norm on $X_m$, and
\begin{equation}\label{eq:stage-limit-distortion}
 e^{-2^{-m-3}}\|x\|_{X_m}\leq\|x\|_\infty
 \leq e^{2^{-m-3}}\|x\|_{X_m}.
\end{equation}
Let $\bar X_m$ denote $X_m$ with this limit norm.  Then
$I_m\in\Emb(\bar X_m,\bar X_{m+1})$.
Let $B$ be the
completion of the union $V=\bigcup_m \bar X_m$.  Denote the canonical
embeddings by $\iota_m\in\Emb(\bar X_m,B)$.

\begin{lemma}\label{prop:limit-computable}
$B$ has a computable presentation.
\end{lemma}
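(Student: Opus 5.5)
We will present $B$ by the sequence of images of coordinate vectors. Concretely, enumerate the pairs $(m,j)$ with $j<d_m$, and let the distinguished sequence be $b_{(m,j)}=\iota_m(e^{(m)}_j)$, where $e^{(m)}_j$ is the $j$th standard basis vector of $X_m=\ell_{q_m}^{d_m}$. Since $V=\bigcup_m\iota_m(\bar X_m)$ is dense in $B$ and each $\iota_m(\bar X_m)$ is spanned by the $b_{(m,j)}$, this sequence has dense linear span. The first thing to check is that the construction itself is effective: $d_m$, $X_m$ and the rational matrices $I_m$ are computed uniformly in $m$. This holds because the recursion only ever calls the effective search of \Cref{lem:finite-pair-package}, whose inputs (the space $Y_m$, the exponents $q_m,q_{m+1}\in I\cap\Q$, the tolerances $2^{-m-4}$ and $2^{-h-3}$, and the rationals $\beta_{\dim D_e,h}<\omega_I(\dim D_e,2^{-h-4})$ obtained from \eqref{eq:beta-choice} and the computable $\omega_I$ of \Cref{thm:uniform-stability}) are all uniformly computable from the previous stage.

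Next, fix a rational combination $y=\sum_{l<n}c_l b_{(m_l,j_l)}$. Let $t_0=\max_l m_l$ and push each term forward to stage $t_0$ via the rational matrix $I_{m_l,t_0}$. This gives a rational vector $x\in X_{t_0}$ with $\iota_{t_0}x=y$, because $\iota_{m}=\iota_t I_{m,t}$ for $m\le t$ (as $I_m\in\Emb(\bar X_m,\bar X_{m+1})$). We then have $\|y\|_B=\|x\|_\infty=\lim_t\|I_{t_0,t}x\|_{X_t}$ by \eqref{eq:limit-norm}.

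To compute this limit to precision $2^{-n}$, we use a modulus. Applying \eqref{eq:stage-limit-distortion} to $I_{t_0,t}x\in X_t$ (whose limit norm is again $\|x\|_\infty$) gives
\[
\bigl|\|x\|_\infty-\|I_{t_0,t}x\|_{X_t}\bigr|\le (e^{2^{-t-3}}-1)\|I_{t_0,t}x\|_{X_t}\le 2^{-t-2}\cdot 2\|x\|_{X_{t_0}}.
\]
Here we also bounded $\|I_{t_0,t}x\|_{X_t}\le e^{2^{-t_0-3}}\|x\|_{X_{t_0}}\le 2\|x\|_{X_{t_0}}$ using the summable distortions. So we first compute a rational upper bound $R$ for $\|x\|_{X_{t_0}}$. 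Then we choose $t$ with $2^{-t-1}R<2^{-n-1}$ and compute $\|I_{t_0,t}x\|_{q_t}$ to within $2^{-n-1}$. The latter is possible because $I_{t_0,t}x$ is a rational vector and $q_t$ is a computable rational, so the $\ell_{q_t}$-norm is computable. All of this is uniform in the combination and in $n$.

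The main point to verify carefully is the first step, that the searches in \Cref{lem:finite-pair-package} always terminate. The lemma guarantees this provided its hypotheses hold at every stage, namely $Y_m\in\Age(L^{q_m}[0,1])$ and $q_{m+1}\le q_m$. Both were checked above, so the whole recursion is a total computable procedure. With that, the norms $\|\sum_l c_l b_{(m_l,j_l)}\|$ are uniformly computable, and the sequence $(b_{(m,j)})$ is a computable presentation of $B$ in the sense of \Cref{def:computable-banach-presentation}.
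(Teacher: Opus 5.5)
Your proof is correct and follows essentially the same route as the paper. You reduce a rational combination to a single rational vector $x$ in some $X_{t_0}$, compute the stage norms $\|I_{t_0,t}x\|_{X_t}$, and use \eqref{eq:stage-limit-distortion} with $e^{2^{-t-3}}-1\le 2^{-t-2}$ and $\|I_{t_0,t}x\|_{X_t}<2R$ to get the explicit error bound $R2^{-t-1}$. Your explicit check that the recursion is uniformly effective, i.e.\ that the searches in \Cref{lem:finite-pair-package} terminate, is only implicit in the paper and is a reasonable addition.
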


\begin{proof}
Enumerate the pairs $(m,v)$ with $v\in\Q_\F^{d_m}$.
Their images $\iota_mv$ are dense in $B$, and any rational linear
combination can be represented by a rational vector $w$ in a
single space $X_m$.  Choose a rational $R>\|w\|_{X_m}$ and put
$\Delta_t=\|I_{m,t}w\|_{X_t}$ for $t\geq m$.
These numbers are uniformly computable, and
$e^{-2^{-t-3}}\Delta_t\leq\|\iota_mw\|_B\leq e^{2^{-t-3}}\Delta_t$.
Since $2^{-m-3}\leq1/4$, we have $\Delta_t<2R$; also
$e^{2^{-t-3}}-1\leq2^{-t-2}$.  Consequently
\[
 \bigl|\|\iota_mw\|_B-\Delta_t\bigr|
 \leq(e^{2^{-t-3}}-1)\Delta_t
 \leq R2^{-t-1}\leq R2^{-t}.
\]
This gives an effective error bound for the norm of every rational
linear combination, as required.
\end{proof}

\begin{lemma}\label{lem:stage-geometry}
The following properties hold.
\begin{enumerate}[label=(\roman*)]
\item For every $m$, $\bar X_m \in \Age(L^p[0,1])$.
\item For every $E\in\Age(L^p[0,1])$ and every $\alpha>0$,
all sufficiently large $t$ admit a map
$G\in\Emb^{\log}_{\alpha}(E,X_t)$.
\end{enumerate}
\end{lemma}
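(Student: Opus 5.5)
For (i), the plan is to show that $\bar X_m$ is a Banach--Mazur limit of members of $\Age_{d_m}(L^p[0,1])$ and then invoke \Cref{lem:lp-age-closed} with $r=p\in(1,2)$. By \eqref{eq:limit-norm} and \eqref{eq:stage-limit-distortion} applied at stage $t$, the identity map $\bar X_m\to X_m$ composed with $I_{m,t}$ gives an isomorphism of $\bar X_m$ onto the subspace $I_{m,t}(X_m)\subseteq X_t=\ell_{q_t}^{d_t}$ with $\ldis\le 2^{-t-3}$. By \Cref{lem:bm-versus-ldis}, $\distBM(\bar X_m,I_{m,t}(X_m))\le 2^{-t-2}$. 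Next, compare $\ell_{q_t}^{d_t}$ with $\ell_p^{d_t}$ through the formal identity. The same estimate used in \Cref{lem:D-dense} bounds its logarithmic distortion by $(1/p-1/q_t)\log d_t$. I cannot assume this tends to $0$, because $d_t$ may grow faster than $q_t$ converges.

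So I will instead use \Cref{thm:stable-variable-embedding} in the opposite direction. Because $q_t\ge p$, the space $\ell_{q_t}^{d_t}$ embeds isometrically into $L^{q_t}[0,1]$, which in turn embeds isometrically into $L^p[0,1]$. Hence $I_{m,t}(X_m)\in\Age_{d_m}(L^p[0,1])$ for every $t$. The $\distBM$ bound above then shows that $\bar X_m$ is a Banach--Mazur limit of members of $\Age_{d_m}(L^p)$, and \Cref{lem:lp-age-closed} gives $\bar X_m\in\Age(L^p[0,1])$. This route avoids the issue with $d_t$ entirely.

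For (ii), fix $E\in\Age_k(L^p[0,1])$ and $\alpha>0$. By \Cref{lem:D-dense} and \Cref{lem:bm-versus-ldis}, I choose $e$ with $\dim D_e=k$ and an isomorphism $T:E\to D_e$ with $\ldis(T)<\alpha/2$. For every $t\ge e+1$, property $(C_{t-1})$ supplies $C_{e,t-1}\in\Emb^{\log}_{2^{-t-3}}(D_e,X_t)$. By \Cref{lem:distortion-calculus}, the map $G=C_{e,t-1}T$ satisfies $\ldis(G)<\alpha/2+2^{-t-3}$, which is less than $\alpha$ once $2^{-t-3}<\alpha/2$. Hence every sufficiently large $t$ works.

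The only delicate point is the one in (i): the naive norm comparison fails because of the factor $\log d_t$. The embedding-based argument above is designed to sidestep it, and it relies only on the inequality $q_t\ge p$ together with the closure of the age.
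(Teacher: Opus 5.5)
Your proposal is correct and follows the paper's proof essentially step for step: in (i) the paper likewise gets $\distBM(\bar X_m, I_{m,t}(X_m))\le 2^{-t-2}$ from \eqref{eq:stage-limit-distortion}, places $X_t=\ell_{q_t}^{d_t}$ in $\Age(L^p[0,1])$ via \eqref{eq:normalized-step-embedding} and \Cref{thm:stable-variable-embedding}, and concludes by \Cref{lem:lp-age-closed}, while (ii) uses the same composition of a low-distortion map $E\to D_e$ with $C_{e,t-1}$ from $(C_{t-1})$. Your remark about the $\log d_t$ factor is a sound reason to prefer the embedding route, and that is exactly the route the paper takes.
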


\begin{proof}
\textup{(i)} Fix $m$.  For $t\geq m$, the isometricity of the bonding maps
for the limiting norms and \eqref{eq:stage-limit-distortion} give
\[
 e^{-2^{-t-3}}\|x\|_{\bar X_m}
 \leq\|I_{m,t}x\|_{X_t}
 \leq e^{2^{-t-3}}\|x\|_{\bar X_m}.
\]
Thus the image $I_{m,t}(X_m)$, with the norm inherited from $X_t$,
has Banach--Mazur distance at most $2^{-t-2}$ from $\bar X_m$.
Every $X_t=\ell_{q_t}^{d_t}$ belongs to $\Age(L^p[0,1])$ by
the normalized-indicator embedding \eqref{eq:normalized-step-embedding}
and \Cref{thm:stable-variable-embedding}.
Since $2^{-t-3}\to0$, the closedness of the fixed-dimensional age in
\Cref{lem:lp-age-closed} proves the assertion.

\textup{(ii)} Fix $E\in\Age(L^p[0,1])$ and $\alpha>0$.
Use \Cref{lem:D-dense} and \Cref{lem:bm-versus-ldis} to choose
$R\in\Emb^{\log}_{\alpha/2}(E,D_d)$.  For sufficiently large
$t$, we have $t-1\geq d$ and $2^{-t-3}<\alpha/2$; then
$(C_{t-1})$ gives the required map $G=C_{d,t-1}R$.
\end{proof}

\begin{proof}[Proof of \Cref{thm:main}]
By \Cref{prop:limit-computable}, it remains to identify $B$ with
$L^p([0,1];\F)$.  We do this by an approximate intertwining.

Fix a sequence $(z_i)_{i\in\N}$ dense in the unit ball of $L^p[0,1]$.
We construct strictly increasing indices $m_n$ and
$f_n\in\Emb(\bar X_{m_n},L^p[0,1])$ such that
\begin{equation}\label{eq:identification-coherence}
 \|f_{n+1}I_{m_n,m_{n+1}}-f_n\|_{\bar X_{m_n},L^p}
 <3\cdot2^{-n-3}.
\end{equation}
At the same time, writing
$E_n=\operatorname{span}\bigl(f_n(\bar X_{m_n}),z_0,\ldots,z_n\bigr)$,
we construct maps $b_n:E_n\to\bar X_{m_{n+1}}$ with $\|b_n\|<2$ and 
\begin{equation}\label{eq:identification-backward}
 \|f_{n+1}b_n-\id_{E_n}\|_{E_n,L^p}<2^{-n-3}.
\end{equation}
Here $E_n$ always carries its norm as a subspace of $L^p[0,1]$.
Start with $m_0=0$ and any
$f_0\in\Emb(\bar X_0,L^p[0,1])$, which exists by \Cref{lem:stage-geometry}(i).

Suppose $m_n,f_n$ have been chosen, and define $E_n$ as above.
Put $\beta=\beta_{d_{m_n},n}$.  Since
$\bar X_{m_n}\in\Age(L^p[0,1])$, choose a $d_{m_n}$-dimensional
$D_e$ and an isomorphism $S\in\Emb^{\log}_{\beta/4}(D_e,\bar X_{m_n})$
using \Cref{lem:D-dense} and \Cref{lem:bm-versus-ldis}.
Choose a positive rational 
$\delta<\min\{\beta/2,
              \omega_I(\dim E_n,2^{-n-3})\}$.
By \Cref{lem:stage-geometry}(ii), we may take $t>m_n$ sufficiently
large that $t\geq e,n$, $2^{-t-3}<\delta/2$, and there is
$g\in\Emb^{\log}_{\delta/2}(E_n,X_t)$.
We apply $(P_t)$ with domain $D_e$, accuracy index $h=n$, and maps
$gf_nS$ and $I_{m_n,t}S$.  Their eligibility follows from
\[
 \begin{aligned}
 gf_nS&\in
 \Emb^{\log}_{\delta/2+\beta/4}(D_e,X_t)
 \subseteq\Emb^{\log}_{\beta}(D_e,X_t),\\
 I_{m_n,t}S&\in
 \Emb^{\log}_{2^{-t-3}+\beta/4}(D_e,X_t)
 \subseteq\Emb^{\log}_{\beta}(D_e,X_t).
 \end{aligned}
\]
Thus $(P_t)$ supplies a rational
$J\in\Emb^{\log}_{2^{-t-4}}(X_t,X_{t+1})$ satisfying
\begin{equation}\label{eq:identification-stage-alignment}
 \|Jgf_nS-I_tI_{m_n,t}S\|_{D_e,X_{t+1}}<2^{-n-3}.
\end{equation}

Set $m_{n+1}=t+1$ and regard $b_n=Jg$ as a map into
$\bar X_{m_{n+1}}$.  Since $2\cdot2^{-t-4}=2^{-t-3}<\delta/2$, we have
\[
 b_n\in
 \Emb^{\log}_{\delta/2+2^{-t-3}}
       (E_n,\bar X_{m_{n+1}})
 \subseteq\Emb^{\log}_{\delta}(E_n,\bar X_{m_{n+1}}).
\]
In particular, $\|b_n\|<e^{\delta}<2$.  Composing
\eqref{eq:identification-stage-alignment} with $S^{-1}$ and
converting its target norm by \eqref{eq:stage-limit-distortion} gives
\begin{equation*}
 \begin{aligned}
 \|b_nf_n-I_{m_n,m_{n+1}}\|_{\bar X_{m_n},\bar X_{m_{n+1}}}
 &<e^{2^{-t-4}}\,2^{-n-3}\|S^{-1}\|\\
 &<e^{2^{-t-4}+\beta/4}\,2^{-n-3}<2^{-n-2},
 \end{aligned}
\end{equation*}
where $2^{-t-4}\leq1/8$ and $\beta<1$ justify the last bound.

Choose $j_n\in\Emb(\bar X_{m_{n+1}},L^p[0,1])$ by \Cref{lem:stage-geometry}(i).
The maps $j_nb_n$ and $\id_{E_n}$ belong to
$\Emb^{\log}_{\omega_I(\dim E_n,2^{-n-3})}(E_n,L^p[0,1])$.
Since $p\in I$ and $E_n\in\Age(L^p[0,1])$,
\Cref{thm:uniform-stability} supplies $W_n\in\Iso(L^p[0,1])$
such that
\[
 \|W_nj_nb_n-\id_{E_n}\|_{E_n,L^p}<2^{-n-3}.
\]
Put $f_{n+1}=W_nj_n\in\Emb(\bar X_{m_{n+1}},L^p[0,1])$.
This proves \eqref{eq:identification-backward}.  Furthermore,
\[
 \begin{aligned}
 \|f_{n+1}I_{m_n,m_{n+1}}-f_n\|
 &\leq
 \|f_{n+1}(I_{m_n,m_{n+1}}-b_nf_n)\|
 +\|(f_{n+1}b_n-\id_{E_n})f_n\|\\
 &<2^{-n-2}+2^{-n-3}=3\cdot2^{-n-3},
 \end{aligned}
\]
which establishes \eqref{eq:identification-coherence} and completes
the induction.

For $x\in\bar X_m$, define
\[
 F\iota_mx=\lim_{\substack{n\to\infty\\m_n\geq m}}
                  f_nI_{m,m_n}x.
\]
The limit exists by \eqref{eq:identification-coherence} and the
summability of $(2^{-n-3})$; the identifications along the bonding maps make $F$ a
well-defined linear map on the dense union $V$.  Each
$f_nI_{m,m_n}$ is isometric for the limiting norm, so $F$ extends
to $F\in\Emb(B,L^p[0,1])$.
Telescoping \eqref{eq:identification-coherence} also gives
\begin{equation}\label{eq:identification-tail}
 \|F\iota_{m_n}-f_n\|_{\bar X_{m_n},L^p}
 \leq\sum_{j\geq n}3\cdot2^{-j-3}=6\cdot2^{-n-3}.
\end{equation}

To prove surjectivity, fix $i$ and let $n\geq i$.  Then
$z_i\in E_n$ and $\|z_i\|\leq1$.  By
\eqref{eq:identification-backward} and \eqref{eq:identification-tail},
\[
 \begin{aligned}
 \|F\iota_{m_{n+1}}b_nz_i-z_i\|
 &\leq 6\cdot2^{-n-4}\|b_nz_i\|
       +\|f_{n+1}b_nz_i-z_i\|\\
 &<12\cdot2^{-n-4}+2^{-n-3}=7\cdot2^{-n-3}\to0.
 \end{aligned}
\]
The range $F(B)$ is closed because $F$ is an isometric embedding
and $B$ is complete.  It therefore contains every $z_i$, and hence the whole
unit ball by density.  As $F(B)$ is a linear subspace, this proves
$F(B)=L^p[0,1]$.
\end{proof}

\section{Exponent recovery in the atomic case and the classification}
\label{sec:exponent-classification}

We prove \Cref{thm:ellp-recovery} by extracting $p$ from symmetric
second differences of the norm.  The argument uses only one atomic
coordinate, so we formulate its geometric estimate for general
$L^p$-spaces for use in the general classification below.

\begin{lemma}\label{lem:ellp-log-recovery}
Let $r\in\R$.  Suppose $(\beta_n)_{n\ge1}$ is a uniformly computable
sequence of positive reals and there are constants $c,C>0$ and
$N\in\N$ such that
$c2^{-nr}\le \beta_n\le C2^{-nr}$ for all $n\ge N$.
Then $r$ is computable.
\end{lemma}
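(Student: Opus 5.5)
The plan is to take logarithms and divide by $n$. Set
\[
 r_n=-\frac{\log_2\beta_n}{n}.
\]
For $n\ge N$ the hypothesis $c2^{-nr}\le\beta_n\le C2^{-nr}$ gives, after taking $\log_2$,
\[
 -nr+\log_2 c\le\log_2\beta_n\le -nr+\log_2 C,
\]
hence
\[
 |r_n-r|\le\frac{K}{n},\qquad K=\max\{|\log_2 c|,|\log_2 C|\}.
\]
So $r_n\to r$ with rate $K/n$. The point is that this rate is effective up to the two constants $K$ and $N$, which we may fix nonuniformly.

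The first step is to check that $(r_n)$ is a uniformly computable sequence of reals. Each $\beta_n$ is a uniformly computable positive real. Since it is positive, a positive rational lower bound for $\beta_n$ can be found effectively by searching approximations until one is bounded away from $0$. On the interval above that bound $\log_2$ is computable, so $\log_2\beta_n$, and therefore $r_n$, is uniformly computable in $n$.

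The second step is to compute $r$ to precision $2^{-j}$. Since computability of $r$ is a nonuniform property, we hard-code a natural number $K'\ge K$ and the integer $N$ into the algorithm. Given $j$, let
\[
 n=\max\{N,\;K'2^{j+1}\}.
\]
Compute a rational $q$ with $|q-r_n|<2^{-j-1}$. Then
\[
 |q-r|<2^{-j-1}+\frac{K}{n}\le 2^{-j-1}+2^{-j-1}=2^{-j}.
\]
Thus $r$ has a computable Cauchy name.

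There is no real obstacle here. The only points needing care are that the positivity of $\beta_n$ is used to make the logarithm effective, and that the nonuniform constants $c$, $C$, $N$ enter only as finite parameters of the algorithm, which is legitimate for proving that $r$ is computable.
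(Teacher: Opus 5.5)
Your proposal is correct and follows the paper's proof: the paper likewise sets $\alpha_n=-n^{-1}\log_2\beta_n$, notes these are uniformly computable, and uses $|\alpha_n-r|\le D/n$ for $n\ge N$. Your extra details (using positivity to make $\log_2$ effective, and hard-coding $N$ and a bound $K'\ge\max\{1,K\}$ so that $n\ge1$) are exactly the routine points the paper leaves implicit.
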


\begin{proof}
The reals $\alpha_n=-n^{-1}\log_2\beta_n$ are uniformly computable,
and
$|\alpha_n-r|\le D/n$ for $n\ge N$ and some fixed $D>0$.
\end{proof}

We next record the form of Clarkson's inequality \cite{Clarkson}
used below, together with an elementary proof.

\begin{lemma}[Clarkson's inequality]\label{lem:lp-clarkson}
Let $1\le p\le2$, and let $B$ be a real or complex $L^p$-space.
For all $u,v\in B$,
\begin{equation}\label{eq:lp-clarkson}
 \lVert u+v\rVert_B^p+\lVert u-v\rVert_B^p
 \le2\bigl(\lVert u\rVert_B^p+\lVert v\rVert_B^p\bigr).
\end{equation}
If $u\ne0$, then also
\begin{equation}\label{eq:lp-norm-second-difference}
 0\le\lVert u+v\rVert_B+\lVert u-v\rVert_B-2\lVert u\rVert_B
 \le\frac{2}{p}\lVert u\rVert_B^{1-p}\lVert v\rVert_B^p.
\end{equation}
\end{lemma}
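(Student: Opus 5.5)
The plan is to prove \eqref{eq:lp-clarkson} pointwise and integrate, and then to deduce \eqref{eq:lp-norm-second-difference} from it by a concavity estimate. Write $B=L^p(\Omega,\mu)$, which is allowed since $B$ is linearly isometric to such a space and both inequalities involve only norms. It then suffices to show that for all scalars $a,b\in\F$,
\[
 |a+b|^p+|a-b|^p\le 2\bigl(|a|^p+|b|^p\bigr),
\]
because integrating this with $a=u(\omega)$, $b=v(\omega)$ gives \eqref{eq:lp-clarkson}. For the scalar inequality, I will use the parallelogram law $|a+b|^2+|a-b|^2=2(|a|^2+|b|^2)$, which holds over both $\R$ and $\C$. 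Set $s=p/2\in[1/2,1]$. Concavity of $t\mapsto t^{s}$ gives
\[
 \tfrac12\bigl(X^s+Y^s\bigr)\le\Bigl(\tfrac{X+Y}2\Bigr)^s \quad\text{for } X,Y\ge0.
\]
Applying this with $X=|a+b|^2$ and $Y=|a-b|^2$ yields $|a+b|^p+|a-b|^p\le 2\bigl(|a|^2+|b|^2\bigr)^{s}$. Subadditivity of $t\mapsto t^s$ (since $s\le1$) then gives $\bigl(|a|^2+|b|^2\bigr)^s\le|a|^p+|b|^p$, which completes the scalar inequality.

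For \eqref{eq:lp-norm-second-difference}, the lower bound $0\le\|u+v\|+\|u-v\|-2\|u\|$ is just the triangle inequality applied to $2u=(u+v)+(u-v)$. For the upper bound, put $A=\|u+v\|$, $C=\|u-v\|$, $U=\|u\|>0$, and $W=\|v\|$. Concavity of $t\mapsto t^{1/p}$ (power mean) gives
\[
 \frac{A+C}{2}\le\Bigl(\frac{A^p+C^p}{2}\Bigr)^{1/p}.
\]
By \eqref{eq:lp-clarkson} the right-hand side is at most $(U^p+W^p)^{1/p}$. Next, the tangent-line bound for the concave map $t\mapsto t^{1/p}$ at $t=U^p$ gives
\[
 (U^p+W^p)^{1/p}\le U+\tfrac1p U^{1-p}W^p.
\]
Multiplying by $2$ and subtracting $2U$ gives \eqref{eq:lp-norm-second-difference}.

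There is no serious obstacle in this argument. The only points that need care are that the parallelogram identity is valid in the complex case (it is, since $|a\pm b|^2=|a|^2+|b|^2\pm2\operatorname{Re}(a\bar b)$), and that the case $p=1$ is included, since then $s=1/2$ and all the concavity steps still apply. The exponent bookkeeping in the tangent-line estimate is the step where a slip would be easiest to make.
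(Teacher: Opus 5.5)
Your proposal is correct and matches the paper's proof of \eqref{eq:lp-clarkson}: concavity of $t\mapsto t^{p/2}$, the parallelogram identity, and then subadditivity, applied pointwise and integrated. For \eqref{eq:lp-norm-second-difference}, the paper applies the tangent-line inequality for $G(s)=s^{1/p}$ at $S=\lVert u\rVert_B^p$ to both values $\lVert u\pm v\rVert_B^p$ and adds them before invoking \eqref{eq:lp-clarkson}, whereas you use midpoint concavity and monotonicity of $G$ first and the tangent line only once; this is just a reordering of the same concavity argument.
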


\begin{proof}
Put $q=p/2\in(0,1]$.  For $a,b\ge0$, $(a+b)^q\le a^q+b^q$.
For real or complex scalars $\alpha,\beta$, concavity of
$t\mapsto t^q$ and this inequality give
\[
 \begin{aligned}
 |\alpha+\beta|^p+|\alpha-\beta|^p
 &\le2\left(\frac{|\alpha+\beta|^2+|\alpha-\beta|^2}{2}\right)^q\\
 &=2\bigl(|\alpha|^2+|\beta|^2\bigr)^q
 \le2\bigl(|\alpha|^p+|\beta|^p\bigr).
 \end{aligned}
\]
Applying this scalar inequality pointwise to representatives of
$u$ and $v$ and integrating proves \eqref{eq:lp-clarkson}.

Now suppose $u\ne0$, and put $S=\lVert u\rVert_B^p>0$ and
$G(s)=s^{1/p}$.  Concavity of $G$ gives the tangent inequality $G(t)-G(S)\le G'(S)(t-S)$ for $t\ge0$.
Apply it with $t=\lVert u+v\rVert_B^p$ and
$t=\lVert u-v\rVert_B^p$, and add.  By \eqref{eq:lp-clarkson},
\[
 \begin{aligned}
 \lVert u+v\rVert_B+\lVert u-v\rVert_B-2\lVert u\rVert_B
 &\le G'(S)\bigl(\lVert u+v\rVert_B^p
                          +\lVert u-v\rVert_B^p-2S\bigr)\\
 &\le2G'(S)\lVert v\rVert_B^p
 =\frac{2}{p}\lVert u\rVert_B^{1-p}\lVert v\rVert_B^p.
 \end{aligned}
\]
The lower bound in \eqref{eq:lp-norm-second-difference} follows
from the triangle inequality.
\end{proof}

\begin{lemma}\label{lem:lp-midpoint-defects}
Let $1<p<2$, let $B$ be an $L^p$-space over $\F$, and let $x,y\in B$
be $\F$-linearly independent.  Put $\varphi(\lambda)=\lVert x+\lambda y\rVert_B$ for $\lambda\in\F$,
and for real $h>0$, let
\[
 \Delta_h\varphi(\lambda)
 =\varphi(\lambda+h)+\varphi(\lambda-h)-2\varphi(\lambda).
\]
For $n\ge1$, define
\[
 K_n=\begin{cases}
 [-n,n],&\F=\R,\\
 \{\lambda\in\C:|\operatorname{Re}\lambda|,
                         |\operatorname{Im}\lambda|\le n\},&\F=\C,
 \end{cases}
\]
and
$M_n=\max_{\lambda\in K_n}\Delta_{2^{-n}}\varphi(\lambda)$.
Then $0<M_n\le C2^{-np}$ for some $C>0$.  If there is a linear
isometric identification $B\cong\F\oplus_pY$, with
$\lVert(a,w)\rVert^p=|a|^p+\lVert w\rVert_Y^p$, under which
$y=(y_0,y')$ with $y_0\ne0$, then also
$M_n\ge c2^{-np}$ for some $c>0$ and all sufficiently large $n$.
If $x,y$ are finite $\Q_{\F}$-linear combinations of the elements of
a computable presentation $(b_j)_{j\in\N}$ of $B$, then $(M_n)$ is
uniformly computable.
\end{lemma}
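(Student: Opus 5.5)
The lemma has four parts: the upper bound, positivity, the lower bound, and computability. I will prove them in that order.

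\emph{Upper bound.} Apply \eqref{eq:lp-norm-second-difference} with $u=x+\lambda y$ and $v=hy$, where $h=2^{-n}$. By linear independence $u\neq0$, and
\[
\Delta_h\varphi(\lambda)\le\tfrac2p\,\|x+\lambda y\|^{1-p}h^p\|y\|^p .
\]
Since $1-p<0$, I need a lower bound on $\|x+\lambda y\|$ that is uniform over all $\lambda\in\F$, not only over $K_n$. The function $\lambda\mapsto\|x+\lambda y\|$ is continuous, strictly positive, and tends to $\infty$ as $|\lambda|\to\infty$. So $m_0=\inf_{\lambda\in\F}\|x+\lambda y\|>0$, and we may take $C=\tfrac2p m_0^{1-p}\|y\|^p$.

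\emph{Positivity.} The lower bound in \eqref{eq:lp-norm-second-difference} gives $\Delta_h\varphi\ge0$. To show $M_n>0$ I would argue by contradiction. If $\Delta_h\varphi(\lambda)=0$, then equality holds in the triangle inequality for $x+\lambda y\pm hy$. Since $1<p<\infty$, $L^p$ is strictly convex, so $x+(\lambda+h)y$ and $x+(\lambda-h)y$ are positive multiples of each other. Independence of $x,y$ then forces them to be equal, i.e.\ $h=0$, a contradiction. Hence $\Delta_h\varphi(0)>0$, and $0\in K_n$ gives $M_n>0$.

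\emph{Lower bound.} Write $x=(x_0,x')$ and $y=(y_0,y')$ in the decomposition $B\cong\F\oplus_pY$. Choose $\lambda^*=-x_0/y_0$, so the atomic coordinate of $x+\lambda y$ vanishes at $\lambda=\lambda^*$; for $n\ge|\lambda^*|+1$ we have $\lambda^*\in K_n$ (real, or with both real and imaginary parts bounded). Then
\[
\varphi(\lambda^*+t)^p=|t|^p|y_0|^p+\psi(t)^p,\qquad \psi(t)=\|x'+(\lambda^*+t)y'\|_Y .
\]
The function $\psi$ is convex. If $\psi(0)=0$, then $\varphi(\lambda^*\pm h)\ge h|y_0|$ and $\varphi(\lambda^*)=0$, so $\Delta\ge 2|y_0|h$, which is even better than needed. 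Otherwise $\psi(0)=s>0$, and I write
\[
\varphi(\lambda^*\pm h)=\bigl(\psi(\pm h)^p+h^p|y_0|^p\bigr)^{1/p}\ge\psi(\pm h)+\kappa h^p,
\]
using $(a^p+\epsilon)^{1/p}\ge a+\epsilon a^{1-p}/p-O(\epsilon^2)$. Here $a=\psi(\pm h)$ stays close to $s$ and is bounded above, so the error term is $O(h^{2p})=o(h^p)$ and $\kappa$ can be taken uniform. Convexity of $\psi$ gives $\psi(h)+\psi(-h)-2\psi(0)\ge0$. Combining, $\Delta_h\varphi(\lambda^*)\ge 2\kappa h^p$ for small $h$, hence $M_n\ge c2^{-np}$ for all sufficiently large $n$. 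This is the step I expect to be the main obstacle, since it needs the uniform Taylor bound and the separate degenerate case $\psi(0)=0$. The key point throughout is that the $\ell^p$-sum structure isolates the $|t|^p$ singularity.

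\emph{Computability.} When $x,y$ are rational combinations of the presentation, $\varphi$ is computable on $\F$ uniformly in the data: on rational $\lambda$ directly from the presentation, and on all of $K_n$ because $\varphi$ is $\|y\|$-Lipschitz. Hence $\Delta_{2^{-n}}\varphi$ is computable uniformly in $n$ on the computably compact set $K_n$, and \Cref{fact:effective-compactness-closure}(ii) gives its maximum $M_n$ uniformly in $n$.
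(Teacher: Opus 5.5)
Your proposal is correct and follows the paper's proof essentially step for step: Clarkson's second-difference bound with a uniform lower bound on $\lVert x+\lambda y\rVert_B$ (the paper's $\operatorname{dist}(x,\F y)$), strict convexity for $M_n>0$, the choice $\lambda_0=-x_0/y_0$ to isolate the $|y_0|^ph^p$ term, and effective compactness of $K_n$ for computability. The only variation is in the lower bound. You expand $s\mapsto s^{1/p}$ about $\psi(\pm h)^p$ in the small increment $|y_0|^ph^p$ and apply convexity to $\psi$ itself, so your remainder is $O(h^{2p})$; the paper instead expands about $\lVert z\rVert_B^p$ in the full increments $d_\pm(h)=O(h)$ and uses $p<2$ to absorb an $O(h^2)$ remainder. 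Your separate case $\psi(0)=0$ is vacuous, since it would force $x+\lambda^*y=0$, contradicting linear independence.
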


\begin{proof}
Fix $h>0$ and put $u=x+hy$ and $v=x-hy$.  These vectors are
linearly independent, since $x=(u+v)/2$ and $y=(u-v)/(2h)$ are
linearly independent.  Thus $a=\lVert u\rVert_B$ and
$b=\lVert v\rVert_B$ are positive, and $u/a$ and $v/b$ are distinct
unit vectors.  Since $B$ is strictly convex and $a/(a+b)\in(0,1)$,
\[
 \frac{2\lVert x\rVert_B}{a+b}
 =\left\lVert\frac{a}{a+b}\frac{u}{a}
              +\frac{b}{a+b}\frac{v}{b}\right\rVert_B<1.
\]
Consequently,
\[
 \Delta_h\varphi(0)
 =\lVert x+hy\rVert_B+\lVert x-hy\rVert_B-2\lVert x\rVert_B
 =a+b-2\lVert x\rVert_B>0.
\]
Since $0\in K_n$, this also gives $M_n>0$ for every $n\ge1$.
Moreover, $\varphi$ is $\lVert y\rVert_B$-Lipschitz, so
$\Delta_h\varphi$ is $4\lVert y\rVert_B$-Lipschitz, uniformly in $h$.
Under the additional assumption on $x,y$, the norm algorithm computes
$\varphi(\lambda)$ uniformly for $\lambda\in\Q_{\F}$ and computes
$\lVert y\rVert_B$.  The Lipschitz bound therefore makes $\varphi$
a computable function on $\F$.  Since the sets $K_n$ are uniformly
computably compact, $M_n$ is computable uniformly in $n$, by
\Cref{fact:effective-compactness-closure}(ii).

For the upper estimate, put
$\delta=\operatorname{dist}(x,\F y)>0$.
Apply \Cref{lem:lp-clarkson} with $\xi=x+\lambda y$ and
$\eta=hy$.  Since $\lVert\xi\rVert_B\ge\delta$, we obtain
\begin{align*}
 \Delta_h\varphi(\lambda)
 \le \frac{2}{p}\lVert\xi\rVert_B^{1-p}\lVert\eta\rVert_B^p
 \le \frac{2}{p}\delta^{1-p}\lVert y\rVert_B^p h^p.
\end{align*}

For the lower estimate, write $x=(x_0,x')$, choose
$\lambda_0=-x_0/y_0$, and put
$z=x+\lambda_0y=(0,z')$.  Linear independence implies $z\ne0$.
Put $S_0=\lVert z\rVert_B^p=\lVert z'\rVert_Y^p>0$ and
$d_\pm(h)=\lVert z\pm hy\rVert_B^p-S_0$.
Since $z\pm hy=(\pm hy_0,z'\pm hy')$, the direct-sum norm gives
\[
 d_\pm(h)=|y_0|^p h^p+\lVert z'\pm hy'\rVert_Y^p
                         -\lVert z'\rVert_Y^p.
\]
The function $t\mapsto\lVert z'+ty'\rVert_Y^p$ is convex on $\R$,
so its values at $h$ and $-h$ have sum at least twice its value at
$0$.  Consequently,
\[
 \begin{aligned}
 d_+(h)+d_-(h)
 =2|y_0|^p h^p+\lVert z'+hy'\rVert_Y^p
                  +\lVert z'-hy'\rVert_Y^p-2\lVert z'\rVert_Y^p
 \ge2|y_0|^p h^p.
 \end{aligned}
\]

We next bound each $d_\pm(h)$ separately.  Write
$R=\lVert z\rVert_B>0$ and restrict to
$0<h\le R/(2\lVert y\rVert_B)$.  The triangle inequality
gives $\bigl|\lVert z\pm hy\rVert_B-R\bigr|\le h\lVert y\rVert_B$, hence 
$\frac R2\le\lVert z\pm hy\rVert_B\le\frac{3R}{2}$.
The derivative of $t\mapsto t^p$ is bounded by
$p(3R/2)^{p-1}$ on $[R/2,3R/2]$.  Thus the mean value theorem yields
\[
 \begin{aligned}
 |d_\pm(h)|
 \le p(3R/2)^{p-1}
       \bigl|\lVert z\pm hy\rVert_B-R\bigr|
 \le p(3R/2)^{p-1}\lVert y\rVert_Bh=:C_1h.
 \end{aligned}
\]
In particular, $d_\pm(h)=O(h)$, with a constant independent of $h$
and of the choice of sign.

Put $G(s)=s^{1/p}$.  Both $S_0$ and $S_0+d_\pm(h)$ belong to
the compact interval $I=[(R/2)^p,(3R/2)^p]\subset(0,\infty)$.
Since $G''(s)=-\frac{p-1}{p^2}s^{1/p-2}$,
the constant $C_2=\max_{s\in I}|G''(s)|$ is finite.
Taylor's theorem, applied separately to the two increments, gives
\[
 \begin{gathered}
 G(S_0+d_\pm(h))
 =G(S_0)+G'(S_0)d_\pm(h)+r_\pm(h),\\
 |r_\pm(h)|\le\frac{C_2}{2}|d_\pm(h)|^2
             \le\frac{C_2C_1^2}{2}h^2.
 \end{gathered}
\]
Using $G(S_0+d_\pm(h))=\varphi(\lambda_0\pm h)$ and
$G(S_0)=\varphi(\lambda_0)$, we obtain
\[
 \begin{aligned}
 \Delta_h\varphi(\lambda_0)
 &=G'(S_0)\bigl(d_+(h)+d_-(h)\bigr)+r_+(h)+r_-(h)\\
 &\ge2G'(S_0)|y_0|^p h^p-C_2C_1^2h^2\\
 &=h^p\bigl(2G'(S_0)|y_0|^p-C_2C_1^2h^{2-p}\bigr).
 \end{aligned}
\]
Here $G'(S_0)|y_0|^p>0$, and $h^{2-p}\to0$ as $h\to0^+$ because
$p<2$.  Hence, for all sufficiently small $h>0$,
$C_2C_1^2h^{2-p}\le G'(S_0)|y_0|^p$, and therefore $\Delta_h\varphi(\lambda_0)\ge ch^p$ where $c=G'(S_0)|y_0|^p>0$.
Finally, $\lambda_0\in K_n$ and $2^{-n}$ is sufficiently small for
all sufficiently large $n$, so
$M_n\ge\Delta_{2^{-n}}\varphi(\lambda_0)\ge c2^{-np}$.
\end{proof}

A single atom suffices for the lower estimate in
\Cref{lem:lp-midpoint-defects}, giving the following recovery result.

\begin{theorem}
\label{prop:general-atomic-recovery}
Let $\F$ be either $\R$ or $\C$, let $1<p<2$, and let $B$ be a
separable $L^p$-space over $\F$ of dimension at least two.  If $B$ has
a nonzero atomic part and is computably presentable, then $p$ is
computable.
\end{theorem}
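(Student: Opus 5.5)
The plan is to feed the sequence $(M_n)$ of \Cref{lem:lp-midpoint-defects} into \Cref{lem:ellp-log-recovery} with $r=p$. The one real difficulty is that \Cref{lem:lp-midpoint-defects} needs the lower bound hypothesis, namely a vector $y$ with nonzero atomic coordinate. We cannot locate the atom effectively, but we only need to know nonuniformly that suitable rational combinations $x,y$ exist. Computability of $p$ is a nonuniform statement, so it suffices to exhibit one pair of finite $\Q_\F$-combinations $x,y$ of the presentation for which both bounds hold.

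Concretely, fix a computable presentation $(b_j)$ of $B$. Since $B$ has a nonzero atomic part, classical structure theory gives a linear isometric identification $B\cong\F\oplus_pY$. Here the first summand is spanned by the normalized indicator of one atom $A$, and $Y$ is $L^p$ of the complement, with the norm $\lVert(a,w)\rVert^p=|a|^p+\lVert w\rVert_Y^p$. The coordinate map $y\mapsto y_0$ (a scaled integral over $A$) is a bounded linear functional, and it is nonzero. Since the rational combinations of the $b_j$ are dense, some rational combination $y$ has $y_0\neq0$. Because $\dim B\ge2$, the rational combinations are not all contained in the line $\F y$, so we can choose a rational combination $x$ that is $\F$-linearly independent of $y$.

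With this fixed pair, \Cref{lem:lp-midpoint-defects} gives that $(M_n)$ is uniformly computable. It also gives constants with $c2^{-np}\le M_n\le C2^{-np}$ for all sufficiently large $n$, and $M_n>0$ throughout. Then \Cref{lem:ellp-log-recovery}, applied with $\beta_n=M_n$ and $r=p$, shows that $p$ is computable.

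The only step needing care is the decomposition $B\cong\F\oplus_pY$. For a separable $L^p(\Omega,\mu)$ with an atom $A$ of finite positive measure, $L^p(\mu)=L^p(\mu|_A)\oplus_pL^p(\mu|_{\Omega\setminus A})$, and $L^p(\mu|_A)$ is one-dimensional, spanned by $\mu(A)^{-1/p}\mathbf 1_A$. This is routine, and everything else reduces to earlier lemmas. As a remark, \Cref{thm:ellp-recovery} then follows as the special case $B=\ell^p$.
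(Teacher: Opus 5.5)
Your proposal is correct and follows the paper's proof essentially step for step. You split off one normalized atomic coordinate $B\cong\F\oplus_pY$, use density of the $\Q_\F$-span of the presentation to pick $y$ outside the proper closed subspace $\{0\}\oplus_pY$ and $x$ outside $\F y$, and combine \Cref{lem:lp-midpoint-defects} with \Cref{lem:ellp-log-recovery}; your justification of the decomposition via $L^p(\mu)=L^p(\mu|_A)\oplus_pL^p(\mu|_{\Omega\setminus A})$ just fills in what the paper states in one phrase.
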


\begin{proof}
Isolate one normalized atomic coordinate to write
$B\cong\F\oplus_pY$, where $Y\ne\{0\}$.
Fix a computable presentation $(b_j)_{j\in\N}$ of $B$ and let $D$
be its $\Q_{\F}$-linear span, which is dense in $B$.
Since $\{0\}\oplus_pY$ is a proper closed subspace, choose
$y\in D\setminus(\{0\}\oplus_pY)$.  Since $\dim B\ge2$, the subspace
$\F y$ is also proper and closed, so choose $x\in D\setminus\F y$.
Thus $y_0\ne0$ in the chosen decomposition and $x,y$ are linearly
independent.
By \Cref{lem:lp-midpoint-defects}, these vectors yield a uniformly
computable positive sequence $(M_n)$ with $c2^{-np}\le M_n\le C2^{-np}$ for some $c,C>0$ and sufficiently large $n$.
\Cref{lem:ellp-log-recovery} now makes $p$ computable.
\end{proof}

In particular, taking $B=\ell^p(\F)$ proves \Cref{thm:ellp-recovery}.
We now combine the exponent-recovery results with the classical
classification of separable $L^p$-spaces.  Write $E\oplus_p F$ for
the sum with norm
$\lVert(x,y)\rVert_{E\oplus_p F}
 =\bigl(\lVert x\rVert_E^p+\lVert y\rVert_F^p\bigr)^{1/p}$.

\begin{theorem}[see \cite{CembranosMendoza}]
\label{thm:classical-separable-classification}
Let $1\le p<\infty$, $p\ne2$, and let $\F$ be either $\R$ or $\C$.
Every nonzero separable $L^p$-space over $\F$ is linearly isometric to
exactly one of the following:
\begin{enumerate}[label=\textup{(\arabic*)}]
 \item $\ell_n^p(\F)$ for some $n\ge1$;
 \item $\ell^p(\F)$;
 \item $L^p([0,1];\F)$;
 \item $\ell_n^p(\F)\oplus_p L^p([0,1];\F)$ for some $n\ge1$;
 \item $\ell^p(\F)\oplus_p L^p([0,1];\F)$.
\end{enumerate}
\end{theorem}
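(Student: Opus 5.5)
The plan is the standard measure-theoretic reduction, cited from \cite{CembranosMendoza}; the argument splits into existence of a representative and uniqueness. For existence, let $B\cong L^p(\Omega,\Sigma,\mu;\F)$ be separable and nonzero.

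\emph{Reduction to a probability space.} Choose a strictly positive $g\in L^p$ on the $\sigma$-finite support of the measure algebra. Separability forces this support to be $\sigma$-finite, so after discarding the null part we may replace $\mu$ by $\sigma$-finite $\mu$. The multiplication map $f\mapsto f/g$ is an isometry onto $L^p(|g|^p\,d\mu)$, so we may assume $\mu$ is a probability measure.

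\emph{Atomic and atomless parts.} Split $\Omega=A\sqcup C$, where $A$ is the union of the (at most countably many) atoms and $C$ is atomless. Then
\[
L^p(\mu)\cong L^p(\mu\restr A)\oplus_p L^p(\mu\restr C).
\]
Rescaling each atom coordinate gives $L^p(\mu\restr A)\cong\ell_n^p$ or $\ell^p$. For the atomless part, separability of $L^p(\mu\restr C)$ makes the measure algebra separable. By Carathéodory's isomorphism theorem (equivalently, pass to a countably generated sub-$\sigma$-algebra that is dense, realize it on a standard Borel space, and apply \Cref{lem:rokhlin-model}), $L^p(\mu\restr C)$ is isometric to $L^p[0,1]$ after normalizing the mass, or it is $0$. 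This yields one of (1)--(5).

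\emph{Uniqueness.} The types must be separated by isometric invariants, which is the step I expect to be the main obstacle. For $p\neq2$, Lamperti's theorem characterizes isometries of $L^p$: they preserve disjointness of supports, equivalently the relation
\[
\|f+g\|^p+\|f-g\|^p=2(\|f\|^p+\|g\|^p)
\]
holds exactly for disjointly supported $f,g$. This holds for $1\le p<\infty$, $p\ne2$, by the equality case of Clarkson's inequality in \Cref{lem:lp-clarkson} for $p<2$ and its reverse for $p>2$. Hence a linear isometry between $L^p$-spaces induces an isomorphism of the lattices of band projections, i.e., of measure algebras modulo scaling.

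The atoms correspond to minimal nonzero bands, namely functions $f$ admitting no disjoint decomposition $f=f_1+f_2$ with both $f_i\neq0$. Their number, $n$ or $\infty$, and the presence or absence of a nonzero atomless band are therefore isometric invariants. These invariants distinguish all five classes and all values of $n$.

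Finally, the scalar field and $p$ are fixed by the statement, so no further invariants are needed.
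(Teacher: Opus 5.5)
Your argument is correct. Note that the paper proves nothing here: it quotes the classification from \cite{CembranosMendoza} and uses it only as a black box in \Cref{thm:complete-separable-classification}. Your outline is therefore a self-contained replacement for the citation, and it is the standard proof: a $\sigma$-finite reduction and a density change to a probability measure, splitting off the countably many atoms, Carath\'eodory's theorem (or \Cref{lem:rokhlin-model} after realizing a dense countably generated $\sigma$-algebra on $2^{\N}$) for the atomless part, and, for uniqueness, the norm characterization of disjointness.

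Three points should be tightened.

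First, the complement of the $\sigma$-finite set $S$ carrying the supports of a dense sequence need not be null; it may carry sets of infinite measure with no subsets of finite positive measure. What you need is that every $f\in L^p(\mu)$ vanishes a.e.\ off $S$, which follows by passing to an a.e.-convergent subsequence.

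Second, \Cref{lem:lp-clarkson} states only the inequality, so the equality case must be read off from its proof. For $p<2$ this uses strict subadditivity of $t\mapsto t^{p/2}$ at $a,b>0$; for $p>2$ it uses strict superadditivity of the reversed chain. In both cases equality of the integrals forces pointwise equality a.e. This characterization is the \emph{reason} isometries preserve disjointness, not an equivalent reformulation of Lamperti's theorem.

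Third, you do not need Lamperti's theorem or the full isomorphism of band lattices, which you assert without proof. Since both $T$ and $T^{-1}$ preserve disjointness, $T$ maps the indecomposable vectors (exactly the nonzero functions supported on an atom) onto the indecomposable vectors. Hence $T$ maps their closed span, whose dimension is the number of atoms (finite $n$ or infinite), onto its counterpart. It also maps the set of vectors disjoint from all indecomposables, which is precisely the atomless band, onto its counterpart. These two invariants already separate the five types and all values of $n$.
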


At $p=2$, these types collapse to the usual Hilbert-space
classification.

\begin{fact}
\label{prop:hilbert-classification}
Every nonzero separable $L^2$-space over $\F$ is linearly isometric to
exactly one of $\ell_n^2(\F)$ ($n\ge1$) or $\ell^2(\F)$.
\end{fact}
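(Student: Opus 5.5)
The plan is to reduce this to standard Hilbert-space facts and not use any of the $L^p$ machinery. Let $B=L^2(\Omega,\Sigma,\mu;\F)$ be a nonzero separable $L^2$-space. Then $B$ is a separable Hilbert space, with inner product $\langle f,g\rangle=\int f\bar g\dd\mu$. Its norm is exactly the $L^2$ norm, so every linear isometry classification question becomes a question about separable Hilbert spaces.

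\emph{Existence.} Apply Gram--Schmidt to a dense sequence of $B$; discard any vector lying in the span of the earlier ones, and normalize the rest. This gives a countable orthonormal family $(e_j)_{j\in J}$ with dense linear span, where $J=\{0,\ldots,n-1\}$ for some $n\ge1$ or $J=\N$. Because $B$ is nonzero, $J$ is nonempty. The coefficient map $f\mapsto(\langle f,e_j\rangle)_{j\in J}$ is isometric by Parseval's identity, which holds because the family is complete. It is surjective onto $\ell^2(J;\F)$ by the Riesz--Fischer theorem. Hence $B$ is linearly isometric to $\ell_n^2(\F)$ or to $\ell^2(\F)$.

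\emph{Exclusivity.} Linear isometries preserve linear dimension. The spaces $\ell_n^2(\F)$ have dimension $n$, and $\ell^2(\F)$ is infinite-dimensional. So no two spaces on the list are linearly isometric.

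Nothing here is a genuine obstacle. The only step needing care is checking that the orthonormal family produced by Gram--Schmidt is complete, so that Parseval applies. This holds because its span equals the span of the dense sequence. The remark that the five types of \Cref{thm:classical-separable-classification} collapse at $p=2$ is then clear: for example, $L^2[0,1]$ is isometric to $\ell^2$ via the Fourier basis.
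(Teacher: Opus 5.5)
Your proof is correct. The paper records this as a standard fact without proof, and your argument is the standard one it implicitly relies on: Gram--Schmidt applied to a dense sequence yields a complete orthonormal family, Parseval and Riesz--Fischer give a surjective isometry onto $\ell^2(J;\F)$, and invariance of linear dimension gives exclusivity.
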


\begin{theorem}
\label{thm:complete-separable-classification}
Let $\F$ be either $\R$ or $\C$, let $1\le p<\infty$, and let $B$ be a
nonzero separable $L^p$-space over $\F$.  Then $B$ is computably
presentable if and only if exactly one of the following alternatives
holds:
\begin{enumerate}[label=\textup{(\roman*)}]
 \item $\dim B=1$;
 \item $\dim B\ge2$, $1<p<2$, $B$ is nonatomic, and $p$ is
       right-c.e.;
 \item $\dim B\ge2$, $1<p<2$, $B$ has a nonzero atomic part, and
       $p$ is computable;
 \item $\dim B\ge2$ and $p\in\{1,2\}$;
 \item $\dim B\ge2$, $2<p<\infty$, and $p$ is computable.
\end{enumerate}
\end{theorem}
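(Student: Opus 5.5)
The plan is to prove both directions by splitting along the classical list. By \Cref{thm:classical-separable-classification} and \Cref{prop:hilbert-classification}, $B$ is linearly isometric to one of finitely many types. Each type is either one-dimensional, nonatomic ($L^p[0,1]$), or has a nonzero atomic part. The five alternatives are mutually exclusive by inspection: they are distinguished by dimension, by $p$, and by atomicity. So only the equivalence needs proof.

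\emph{Sufficiency.} I will treat the alternatives in turn.
\begin{enumerate}[label=(\roman*)]
\item If $\dim B=1$, then $B\cong\F$ is trivially computably presentable.
\item $B$ is nonatomic of dimension at least two, so $B\cong L^p[0,1]$, and \Cref{thm:main} applies.
\item Here $p$ is computable, so $\ell^p_n$, $\ell^p$, $L^p[0,1]$ and their $\oplus_p$-sums have standard presentations. These use rational coordinate vectors and rational step functions, and their norms are computable because $t\mapsto|t|^p$ and $t\mapsto t^{1/p}$ are computable when $p$ is.
\item When $p\in\{1,2\}$, $p$ is computable, and the same standard presentations work.
\item Again $p$ is computable, and the same standard presentations work.
\end{enumerate}
For the sums I will interleave the two dense sequences. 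The norm of a rational combination is then $(\|u\|^p+\|w\|^p)^{1/p}$, which is computable.

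\emph{Necessity.} Assume $B$ is computably presentable and $\dim B\ge2$; the case $\dim B=1$ is alternative (i). The case $p\in\{1,2\}$ is alternative (iv). If $2<p<\infty$, \Cref{thm:mcnicholl-exponent-recovery}(i) makes $p$ computable, giving (v). For $1<p<2$ there are two cases. If $B$ has a nonzero atomic part, \Cref{prop:general-atomic-recovery} makes $p$ computable, giving (iii). Otherwise $B$ is nonatomic, and \Cref{thm:one-sided-exponent-recovery} makes $\{r\in\Q:p<r\}$ c.e., so $p$ is right-c.e., giving (ii).

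I expect no genuine obstacle, since each case reduces to a previously stated result. The only care needed is bookkeeping. First, the atomless types in \Cref{thm:classical-separable-classification} reduce to $L^p[0,1]$ via that theorem. Second, I will check that in case (ii) "nonatomic" combined with separability forces type (3). Third, \Cref{thm:one-sided-exponent-recovery} requires $\dim B\ge2$, which holds.
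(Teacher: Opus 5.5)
Your proposal is correct and takes essentially the same route as the paper. You use standard presentations whenever $p$ is computable, \Cref{thm:main} together with \Cref{thm:one-sided-exponent-recovery} for the nonatomic case with $1<p<2$, and \Cref{prop:general-atomic-recovery} and \Cref{thm:mcnicholl-exponent-recovery}(i) to force $p$ to be computable in the remaining cases. Your explicit check that the alternatives are mutually exclusive, so that ``exactly one'' reduces to ``at least one'', is a detail the paper leaves implicit.
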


\begin{proof}
In dimension one, $B\cong\F$ has a computable presentation
independently of $p$.  Assume $\dim B\ge2$.
Whenever $p$ is computable, the standard presentations of all
types in \Cref{thm:classical-separable-classification} are
computable, including the indicated $L^p$-sums;
\Cref{prop:hilbert-classification} supplies the case $p=2$.
If $1<p<2$ and $B$ is nonatomic, then
$B\cong L^p([0,1];\F)$.  In this case,
\Cref{thm:one-sided-exponent-recovery,thm:main} show that computable
presentability is equivalent to $p$ being right-c.e.
In every remaining case, computable
presentability forces $p$ to be computable, by
\Cref{prop:general-atomic-recovery} for $1<p<2$ and
\Cref{thm:mcnicholl-exponent-recovery}(i) for $p>2$.
\end{proof}

\section{An effective Fra\"{\i}ss\'e theorem for Banach spaces}
\label{sec:gurarij-presentations}

We extend the finite-dimensional construction method of
\Cref{sec:proof-main} to a fixed family of Banach spaces under
abstract hypotheses.  The construction here uses only linear
isometric embeddings, so there is no accumulated distortion to
control and no need to estimate convergence to a limiting norm.
This makes the construction technically simpler than the one in
\Cref{sec:proof-main}.  It is a Banach-space analogue of the
effective Fraïssé constructions for countable structures in Csima,
Harizanov, Miller and Montalbán \cite{CHMM}.  Its application to the Gurarij
Banach space gives a complementary computable presentability
result.

Throughout this section, all Banach spaces and linear maps are over
$\R$.  Let $\mathcal A=(A_n)_{n\in\N}$ be a countable family
of nonzero finite-dimensional real Banach spaces.

A chain of linear isometric embeddings is a sequence $U_0\xrightarrow{u_0}U_1\xrightarrow{u_1}\cdots$ such that $U_s\in\mathcal A$ and $u_s\in\Emb(U_s,U_{s+1})$.
Write $u_r^s=u_{s-1}\cdots u_r$ and $u_r^r=\id_{U_r}$.
When the spaces in $\mathcal A$ are given in fixed coordinates,
the chain is called \emph{computable} if $U_s=A_{n_s}$ for a
computable sequence $(n_s)$ and the matrix entries of $u_s$
are computable uniformly in $s$.
The chain is an \emph{$\mathcal A$-Fra\"{\i}ss\'e sequence} if:
\begin{enumerate}[label=(F\arabic*)]
\item\label{item:fraisse-cofinal}
Every $A_p$ embeds linearly isometrically into some $U_s$.
\item\label{item:fraisse-extension}
For every $f\in\Emb(U_r,A_p)$ and $\epsilon>0$, there are $s>r$
and $i\in\Emb(A_p,U_s)$ with
$\|if-u_r^s\|<\epsilon$.
\end{enumerate}

\begin{theorem}
\label{thm:effective-exact-fraisse}
Suppose that $\mathcal A=(A_n)_{n\in\N}$ satisfies:
\begin{enumerate}[label=(E\arabic*)]
\item\label{item:effective-objects}
Each $A_n$ is given in coordinates as $(\R^{d_n},\|\cdot\|_{A_n})$,
where $d_n$ is computable from $n$, and $\|x\|_{A_n}$ is computable
uniformly in $n$ and $x\in\Q^{d_n}$.
\item\label{item:effective-arrows}
There are computable sequences $(p_e)_{e\in\N}$ and $(q_e)_{e\in\N}$
and $T_e \in \Emb(A_{p_e},A_{q_e})$ whose
matrix entries are computable uniformly in $e$, such that
\[
 \Emb_{\mathrm c}(A_p,A_q)
 :=\{T_e:p_e=p,\ q_e=q\}
\]
is dense in $\Emb(A_p,A_q)$ in the operator norm for every $p,q$.
\item\label{item:abstract-jep}
Every two spaces in $\mathcal A$ embed linearly isometrically into
a common member of $\mathcal A$.
\item\label{item:abstract-nap}
For $f\in\Emb(A_r,A_p)$, $g\in\Emb(A_r,A_q)$, and $\eta>0$,
there are $A_t$ and $i\in\Emb(A_p,A_t)$, $j\in\Emb(A_q,A_t)$ such that
$\|if-jg\|<\eta$.
\end{enumerate}
Then $\mathcal A$ has a computable $\mathcal A$-Fra\"{\i}ss\'e
sequence $U_0\xrightarrow{u_0}U_1\xrightarrow{u_1}\cdots$ whose completed direct limit is computably presentable.  Any two $\mathcal A$-Fra\"{\i}ss\'e sequences have linearly
isometric completed direct limits, hence their common limit
$\operatorname{Flim}(\mathcal A)$ has a computable Banach
presentation.
\end{theorem}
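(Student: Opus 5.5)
The construction will be a dovetailed effective search. The chain is built by stages, and at each stage a finite list of outstanding requirements is satisfied using only maps from the computable dense families $\Emb_{\mathrm c}$ of \ref{item:effective-arrows}. The one decidability issue is isometricity. Whether a given $T_e$ is an isometric embedding is not decidable in general, but every $T_e$ is isometric by hypothesis, so we never need to test it. What we do need to verify are strict inequalities of the form $\|T_{e}T_{e'}-T_{e''}T_{e'''}\|<\eta$. These are semidecidable uniformly, by \Cref{lem:effective-operator-extrema} applied with the effective norms of \ref{item:effective-objects}.

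\textbf{Requirements.} Fix an effective enumeration of requirements of two kinds.
\begin{itemize}
\item Cofinality requirements $R_p$: embed $A_p$ into the chain.
\item Extension requirements $R_{r,e,k}$: for the stage $r$ and an arrow $T_e\in\Emb_{\mathrm c}(U_r,A_{p_e})$, find $s>r$ and $i\in\Emb(A_{p_e},U_s)$ with $\|iT_e-u_r^s\|<2^{-k}$.
\end{itemize}

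\textbf{Stage action.} At stage $s$ we have $U_s=A_{n_s}$ and handle the least unsatisfied requirement. For $R_p$, search for $t$ and arrows $T_e\in\Emb_{\mathrm c}(U_s,A_t)$ and $T_{e'}\in\Emb_{\mathrm c}(A_p,A_t)$; such a $t$ exists by \ref{item:abstract-jep} together with density. Then set $U_{s+1}=A_t$ and $u_s=T_e$. For $R_{r,e,k}$ with $r\le s$, first precompose to obtain $f=T_e$ and $g=u_r^s$, both isometric embeddings out of $U_r$. Then search for $t$ and arrows $T_a\in\Emb_{\mathrm c}(A_{p_e},A_t)$, $T_b\in\Emb_{\mathrm c}(U_s,A_t)$ with $\|T_aT_e-T_bu_r^s\|<2^{-k}$. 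This search terminates: \ref{item:abstract-nap} with $\eta=2^{-k-1}$ gives exact maps $i,j$, and density lets us replace them by dense-family arrows within $2^{-k-2}/\max(1,\dots)$; since all maps have norm $1$, the error increases by at most the perturbation. Put $u_s=T_b$ and $U_{s+1}=A_t$; then $i=T_a$ witnesses the requirement with $s+1$ in place of $s$. Each search is effective and terminates, so the chain $(n_s)$, $(u_s)$ is computable.

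\textbf{The chain is Fra\"{\i}ss\'e.} Condition \ref{item:fraisse-cofinal} holds because every $R_p$ is eventually handled. For \ref{item:fraisse-extension}, take an arbitrary $f\in\Emb(U_r,A_p)$ and $\epsilon>0$. Approximate $f$ by some $T_e$ within $\epsilon/2$ and choose $k$ with $2^{-k}<\epsilon/2$. The requirement $R_{r,e,k}$ then supplies $i$ with $\|if-u_r^s\|\le\|i\|\|f-T_e\|+2^{-k}<\epsilon$.

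\textbf{Computable presentation.} The direct limit is presented exactly as in \Cref{prop:limit-computable}, and more simply, since the bonding maps are exact isometries. The special points are the pairs $(s,v)$ with $v\in\Q^{d_{n_s}}$. A rational combination of such points is pushed forward into a single $U_s$ by the computable matrices $u_r^s$. Its norm is $\|\cdot\|_{A_{n_s}}$ evaluated at a vector with computable entries, which is computable by \ref{item:effective-objects} and the Lipschitz bound \eqref{eq:effective-norm-comparison}.

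\textbf{Uniqueness.} This is the main obstacle, though it is classical and noneffective. Given two Fra\"{\i}ss\'e sequences $(U_s,u_s)$ and $(V_s,v_s)$, run the back-and-forth intertwining of \Cref{sec:proof-main}, now with exact isometries. Alternately use \ref{item:fraisse-extension} on one side, after embedding a stage of the other side into it via \ref{item:fraisse-cofinal}, to build isometric embeddings $f_n:U_{s_n}\to V_{t_n}$ and $g_n:V_{t_n}\to U_{s_{n+1}}$ with
\[
 \|g_nf_n-u_{s_n}^{s_{n+1}}\|<2^{-n},\qquad \|f_{n+1}g_n-v_{t_n}^{t_{n+1}}\|<2^{-n}.
\]
The delicate point is that \ref{item:fraisse-extension} is stated for maps $U_r\to A_p$, where $A_p$ may be any member of $\mathcal A$. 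Here $V_t$ is itself some $A_p$, so the property applies directly. Summability makes the $f_n$ converge to an isometric embedding of the limit, and the $g_n$ converge to its inverse, exactly as in the surjectivity argument of the proof of \Cref{thm:main}. Hence the two completed direct limits are linearly isometric, and $\operatorname{Flim}(\mathcal A)$ inherits the computable presentation.
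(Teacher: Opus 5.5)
Your proposal is correct and takes essentially the same route as the paper: the same cofinality and extension requirements, met by priority through searches over the dense families of (E2), terminating by (E3), respectively by (E4) plus density, with only the strict norm test needing semidecidability; the same norm computation for the direct limit with exact isometric bonding maps; and the same back-and-forth intertwining, using (F1) once to start and then (F2) alternately on the two sequences, since each $V_t$ is itself a member of $\mathcal A$. The only blemish is notational: in (E2) the arrow $T_e$ maps $A_{p_e}$ to $A_{q_e}$, so the extension requirement should read $p_e=n_r$ with witness $i\in\Emb(A_{q_e},U_s)$, rather than having $T_e$ land in $A_{p_e}$.
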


\begin{proof}
We shall construct indices $n_r$ and
$u_r\in\Emb(A_{n_r},A_{n_{r+1}})$, and write $U_r=A_{n_r}$.  For
\ref{item:fraisse-cofinal}, impose
\[
 \mathsf C_p:\quad
 \Emb(A_p,U_t)\ne\varnothing\quad\text{for some }t.
\]
By the density in \ref{item:effective-arrows},
\ref{item:fraisse-extension} reduces to meeting, whenever
$p_e=n_r$ and $k\ge1$, the requirements
\begin{equation}
\label{eq:abstract-extension-requirement}
 \mathsf E_{r,e,k}:\quad
 \|iT_e-u_r^t\|<2^{-k}
 \quad\text{for some }t>r,\ i\in\Emb(A_{q_e},U_t).
\end{equation}
Fix a computable listing without repetitions of all these formal
requirements, giving priority to earlier entries.  A requirement
is satisfied when the construction supplies a witness.

\smallskip
\noindent\emph{Construction.}
Set $U_0=A_0$ and $n_0=0$.  At stage $s+1$, act on the
highest-priority unsatisfied requirement that is either a
$\mathsf C_p$, or an $\mathsf E_{r,e,k}$ with
$r\le s$ and $p_e=n_r$. 

If it is $\mathsf C_p$, search for $a,b$ with $p_a=n_s$, $p_b=p$, $q_a=q_b$
and set $n_{s+1}=q_a$, $U_{s+1}=A_{q_a}$, and $u_s=T_a$.
Then $T_b\in\Emb(A_p,U_{s+1})$ is the required witness. 

If it is $\mathsf E_{r,e,k}$, search for $a,b$ with $p_a=q_e$, $p_b=n_s$, $q_a=q_b$ and $\|T_aT_e-T_bu_r^s\|<2^{-k}$,
and set $n_{s+1}=q_a$ and $u_s=T_b$.
Since $u_r^{s+1}=T_bu_r^s$, the map $T_a$ witnesses
$\mathsf E_{r,e,k}$. 

\smallskip
\noindent\emph{Verification.}
The index conditions are decidable, and the strict norm test is
semidecidable by \Cref{lem:effective-operator-extrema}.
The first search terminates by
\ref{item:abstract-jep} and \ref{item:effective-arrows}.
For the second, \ref{item:abstract-nap} gives an amalgam with
error less than $2^{-k-1}$; sufficiently close approximations to
its two embeddings from the dense families in
\ref{item:effective-arrows} satisfy the displayed test.
Thus all stages terminate and the bonding matrices are uniformly
computable.  Hence
every requirement is eventually satisfied.
The requirements $\mathsf C_p$ give \ref{item:fraisse-cofinal}.
For \ref{item:fraisse-extension}, given
$f\in\Emb(U_r,A_p)$ and $\epsilon>0$, choose $e,k$ with
$p_e=n_r$, $q_e=p$, $\|f-T_e\|<\epsilon/2$, and
$2^{-k}<\epsilon/2$.  A witness $i\in\Emb(A_p,U_t)$ to
$\mathsf E_{r,e,k}$ then satisfies
\[
 \|if-u_r^t\|
 \le\|f-T_e\|+\|iT_e-u_r^t\|<\epsilon.
\]

The norm computation in the proof of
\Cref{prop:limit-computable} gives a computable presentation of
the completed limit $B$.  Indeed, enumerate the images in $B$ of all
rational coordinate vectors in the spaces $U_s$.  They are dense, and,
writing $\iota_s\in\Emb(U_s,B)$ for the canonical embeddings, each
finite rational linear combination satisfies
\begin{equation}
\label{eq:abstract-limit-norm}
 \left\|\sum_{j<m}\alpha_j\iota_{s_j}a_j\right\|_B
 =\left\|\sum_{j<m}\alpha_j u_{s_j}^t a_j\right\|_{U_t},
 \qquad t\ge\max_{j<m}s_j.
\end{equation}
The right-hand vector has uniformly computable coordinates, so
its norm is uniformly computable by \ref{item:effective-objects}
and \eqref{eq:effective-norm-comparison}. 

Finally, let $(U_s,u_s)$ and $(V_s,v_s)$ be any two
$\mathcal A$-Fra\"{\i}ss\'e sequences.  We construct strictly
increasing indices $r_n,s_n$ and
\[
 f_n\in\Emb(U_{r_n},V_{s_n}),
 \qquad g_n\in\Emb(V_{s_n},U_{r_{n+1}})
\]
such that
\begin{equation}
\label{eq:abstract-approximate-intertwining}
 \begin{aligned}
 \|g_nf_n-u_{r_n}^{r_{n+1}}\|<2^{-n-2}, \quad
 \|f_{n+1}g_n-v_{s_n}^{s_{n+1}}\|<2^{-n-2}.
 \end{aligned}
\end{equation}
Set $r_0=0$.  Since $U_0\in\mathcal A$, property
\ref{item:fraisse-cofinal} for the sequence $(V_s,v_s)$ gives
an index $s_0$ and $f_0\in\Emb(U_0,V_{s_0})$.

Suppose that $r_n,s_n$ and $f_n\in\Emb(U_{r_n},V_{s_n})$ have
been chosen.  Apply \ref{item:fraisse-extension} to the sequence
$(U_s,u_s)$, taking $U_{r_n}$ as the source space, $V_{s_n}$
as the member of $\mathcal A$ serving as the target, $f_n$ as
the given embedding, and $2^{-n-2}$ as the tolerance.
This gives an index $r_{n+1}>r_n$ and
$g_n\in\Emb(V_{s_n},U_{r_{n+1}})$ satisfying the first
inequality in \eqref{eq:abstract-approximate-intertwining}.

Now apply \ref{item:fraisse-extension} to the sequence $(V_s,v_s)$,
with source $V_{s_n}$, target $U_{r_{n+1}}\in\mathcal A$,
given embedding $g_n$, and the same tolerance $2^{-n-2}$.
This gives an index $s_{n+1}>s_n$ and
$f_{n+1}\in\Emb(U_{r_{n+1}},V_{s_{n+1}})$ satisfying the
second inequality in \eqref{eq:abstract-approximate-intertwining}.
This completes the induction. 

Let $B_U$ and $B_V$ be the respective completed direct limits.
Identify $U_s$ and $V_s$ with their images under the canonical
embeddings, so that the bonding maps $(u_s)$ and $(v_s)$ become inclusions.
Since $(r_n)$ and $(s_n)$ are strictly increasing, the unions
$\bigcup_n U_{r_n}$ and $\bigcup_n V_{s_n}$ are dense in
$B_U$ and $B_V$, respectively.
For $x\in U_{r_n}$, insert $f_{n+1}g_nf_nx$ and use
\eqref{eq:abstract-approximate-intertwining} to obtain
\[
 \begin{aligned}
 \|f_{n+1}x-f_nx\|_{B_V}
 &\le\|f_{n+1}(x-g_nf_nx)\|_{B_V}
       +\|f_{n+1}g_nf_nx-f_nx\|_{B_V}\\
 &\le 2^{-n-2}\|x\|_{B_U}
       +2^{-n-2}\|f_nx\|_{B_V}
 =2^{-n-1}\|x\|_{B_U}.
 \end{aligned}
\]
For a fixed $x$ in $\bigcup_n U_{r_n}$, this estimate applies
for all sufficiently large $n$.  Its summable right-hand side
shows that $(f_nx)$ is Cauchy in $B_V$.  Define
$Fx=\lim_n f_nx$, starting with any $n$ for which $x\in U_{r_n}$.
The definition is independent of this starting index.  Linearity
follows by taking limits on a common space containing the vectors
involved, and
$\|Fx\|_{B_V}=\lim_n\|f_nx\|_{B_V}=\|x\|_{B_U}$.
Thus $F$ extends uniquely to $F\in\Emb(B_U,B_V)$.  Summing the difference estimate also gives
\[
 \|Fx-f_nx\|_{B_V}
 \le\sum_{m\ge n}2^{-m-1}\|x\|_{B_U}
 =2^{-n}\|x\|_{B_U}
 \qquad(x\in U_{r_n}).
\]

To prove surjectivity, fix $y\in V_{s_j}$ and let $n\ge j$.
Then $y\in V_{s_n}$, $g_ny\in U_{r_{n+1}}$, and
$\|g_ny\|_{B_U}=\|y\|_{B_V}$.  Applying the preceding tail
estimate with index $n+1$ and then the second inequality in
\eqref{eq:abstract-approximate-intertwining}, we get
\[
 \begin{aligned}
 \|F(g_ny)-y\|_{B_V}
 &\le\|F(g_ny)-f_{n+1}g_ny\|_{B_V}
       +\|f_{n+1}g_ny-y\|_{B_V}\\
 &\le 2^{-n-1}\|g_ny\|_{B_U}
       +2^{-n-2}\|y\|_{B_V}
 =3\cdot2^{-n-2}\|y\|_{B_V}\to 0.
 \end{aligned}
\]
Consequently every vector in the dense union $\bigcup_jV_{s_j}$
lies in the closure of $F(B_U)$.  This range is closed because
$F$ is isometric and $B_U$ is complete.  Hence $F(B_U)=B_V$,
which proves uniqueness of $\operatorname{Flim}(\mathcal A)$.
\end{proof}

We now apply \Cref{thm:effective-exact-fraisse} to the Gurarij
Banach space, following Ben Yaacov and Henson \cite{BenYaacovHenson}
for the definition and basic properties. 

\begin{definition}
\label{def:gurarij-space}
A nonzero real Banach space $G$ is a
\emph{Gurarij space} if, for every $\epsilon>0$, finite-dimensional
real Banach spaces $E\subseteq H$ with $H\ne\{0\}$, and
$u\in\Emb(E,G)$, there is 
$v \in \Emb^{\log}_\epsilon(H,G)$ such that 
$v\restr E=u$.
\end{definition}

A separable Gurarij space exists and is unique
up to surjective linear isometry \cite{LuskyGurarij}.
It is universal for separable real Banach spaces: every such space
embeds linearly isometrically into it
\cite[Corollary~2.7]{BenYaacovHenson}.
It is also approximately ultrahomogeneous
\cite[Corollary~2.8]{BenYaacovHenson}.
From now on, $\mathbb G$ denotes the unique separable real Gurarij
space.

\begin{question}[{\cite[Question A.19]{FGM}}]
\label{question:gurarij-presentations}
Does $\mathbb G$ have a computable
presentation?
\end{question}

For every integer $n\ge1$, let $\ell_\infty^n(\R)
 :=\bigl(\R^n,\|\cdot\|_\infty\bigr)$, where $\|(x_1,\ldots,x_n)\|_\infty=\max_{1\le j\le n}|x_j|$.
These finite-dimensional spaces with the maximum norm will be the
building blocks of our construction. A \emph{linear contraction}
is a linear map $T$ satisfying $\|Tx\|\le\|x\|$ for every $x$.

\Needspace{7\baselineskip}
\begin{lemma}
\label{lem:supnorm-amalgamation}
Let $E$ be a finite-dimensional real Banach space, let $m,n\ge1$,
and let $0\le\delta<1$.  Suppose that
$a:E\to\ell_\infty^m(\R)$ and $b:E\to\ell_\infty^n(\R)$
are linear contractions such that $(1-\delta)\|x\|_E
 \le \min\{\|ax\|_\infty,\|bx\|_\infty\}$ for all $x \in E$.
Then there are $i \in \Emb(\ell_\infty^m(\R),\ell_\infty^{m+n}(\R))$ and $j \in \Emb(\ell_\infty^n(\R),\ell_\infty^{m+n}(\R))$ such that $\|ia-jb\|\le\delta$.
\end{lemma}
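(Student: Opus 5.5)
The plan is to build $i$ and $j$ as ``graph'' embeddings into $\ell_\infty^{m+n}(\R)=\ell_\infty^m(\R)\oplus_\infty\ell_\infty^n(\R)$. I will look for linear contractions $\varphi:\ell_\infty^m\to\ell_\infty^n$ and $\chi:\ell_\infty^n\to\ell_\infty^m$ and set
\[
 i(y)=(y,\varphi y),\qquad j(z)=(\chi z,z).
\]
Since $\|\varphi y\|_\infty\le\|y\|_\infty$, the max-norm gives $\|i(y)\|_\infty=\|y\|_\infty$, so $i$ is a linear isometric embedding. The same holds for $j$. For $x\in E$ we have
\[
 (ia-jb)x=\bigl(ax-\chi bx,\ \varphi ax-bx\bigr),
\]
so it suffices to arrange $\|ax-\chi bx\|_\infty\le\delta\|x\|_E$ and $\|bx-\varphi ax\|_\infty\le\delta\|x\|_E$.

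To construct $\chi$, first note that $b$ is injective, because $\|bx\|_\infty\ge(1-\delta)\|x\|_E$ and $\delta<1$. Hence $b^{-1}:b(E)\to E$ exists with norm at most $(1-\delta)^{-1}$. Write $a=(a_1,\dots,a_m)$ with coordinate functionals $a_k\in E^*$. Since $a$ is a contraction into the max-norm, $\|a_k\|\le1$.

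Consider the functional $(1-\delta)\,a_kb^{-1}$ on the subspace $b(E)\subseteq\ell_\infty^n$. Its norm is at most $1$, so by Hahn--Banach it extends to a functional $\chi_k$ on $\ell_\infty^n$ of norm at most $1$. Put $\chi=(\chi_1,\dots,\chi_m)$. Each coordinate has norm $\le1$, so $\chi$ is a linear contraction into $\ell_\infty^m$. By construction $\chi bx=(1-\delta)ax$, and therefore
\[
 \|ax-\chi bx\|_\infty=\delta\|ax\|_\infty\le\delta\|x\|_E.
\]
The map $\varphi$ is built symmetrically from the coordinate functionals of $b$ and the inverse of $a$ on $a(E)$, giving $\varphi ax=(1-\delta)bx$.

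Combining these two estimates yields $\|(ia-jb)x\|_\infty\le\delta\|x\|_E$, that is, $\|ia-jb\|\le\delta$. The only point needing care is the scaling by $(1-\delta)$. It is what brings the extended functionals down to norm at most $1$, so that $i$ and $j$ are genuinely isometric, and it costs exactly the error $\delta\|ax\|_\infty$ (respectively $\delta\|bx\|_\infty$). I do not expect any further obstacle, since the extension into an $\ell_\infty$ target reduces coordinatewise to the real Hahn--Banach theorem.
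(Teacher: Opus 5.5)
Your proposal is correct and follows the paper's proof essentially verbatim. The paper likewise extends $ax\mapsto(1-\delta)bx$ and $bx\mapsto(1-\delta)ax$ coordinatewise via Hahn--Banach to contractions $R,S$, then uses the graph embeddings $i(y)=(y,Ry)$ and $j(z)=(Sz,z)$ to get $(ia-jb)x=(\delta ax,-\delta bx)$.
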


\begin{proof}
The maps $a,b$ are injective, and
\[
 ax\mapsto(1-\delta)bx,
 \qquad
 bx\mapsto(1-\delta)ax
\]
are contractions on $a(E)$ and $b(E)$, respectively.
By applying the Hahn--Banach theorem coordinatewise, these maps
extend to contractions
\[
 R:\ell_\infty^m(\R)\to\ell_\infty^n(\R),
 \qquad
 S:\ell_\infty^n(\R)\to\ell_\infty^m(\R).
\]
Define $i(x)=(x,Rx)$ and $j(y)=(Sy,y)$, using the maximum norm
on the concatenated $m+n$ coordinates.  Both maps are isometric since $R$ and $S$ are contractions.
Since $Ra=(1-\delta)b$ and $Sb=(1-\delta)a$, we have $iax-jbx=(\delta ax,-\delta bx)$ and $\|iax-jbx\|_\infty\le\delta\|x\|_E$. 
This proves the lemma.
\end{proof}

\begin{lemma}[{\cite[Proposition~2.12(1),(5) and Section~6.1]{LupiniFraisse}}\footnote{This lemma is the equivalence of conditions (1) and (5) in
\cite[Proposition~2.12, p.~112]{LupiniFraisse}, specialized to real
Banach spaces as in \cite[Section~6.1, p.~130]{LupiniFraisse}, with
$\mathcal I=\{\ell_\infty^n(\R):n\ge1\}$.
Since the underlying structures are the closed unit balls, the
metric in \cite[Definition~2.3, p.~105]{LupiniFraisse} is
$d(g\varphi,f)=\sup_{\|x\|_E\le1}\|g\varphi x-fx\|_B
=\|g\varphi-f\|$.}]
\label{lem:gurarij-supnorm-characterization}
A nonzero separable real Banach space $B$ is linearly isometric to
$\mathbb G$ if and only if it has the following property:
for every finite-dimensional real Banach space $E$, integer
$n\ge1$, $f\in\Emb(E,B)$,
$\varphi\in\Emb(E,\ell_\infty^n(\R))$,
and $\epsilon>0$, there exists
$g\in\Emb(\ell_\infty^n(\R),B)$ such that
$\|g\varphi-f\|<\epsilon$.
\end{lemma}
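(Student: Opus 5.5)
The plan is to prove the two implications separately, using the uniqueness of the separable Gurarij space \cite{LuskyGurarij} for the converse. The only special features of $\ell_\infty^n(\R)$ needed are its $1$-injectivity (coordinatewise Hahn--Banach, as in the proof of \Cref{lem:supnorm-amalgamation}) and the fact that every finite-dimensional space almost embeds into it.

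\emph{($\mathbb G$ has the property.)} Let $f\in\Emb(E,\mathbb G)$, $\varphi\in\Emb(E,\ell_\infty^n(\R))$ and $\epsilon>0$. Regard $E$ as the subspace $\varphi(E)$ of $H=\ell_\infty^n(\R)$. \Cref{def:gurarij-space} then gives $v_0\in\Emb^{\log}_{\delta_0}(H,\mathbb G)$ with $v_0\varphi=f$. The only defect is that $v_0$ is merely almost isometric.

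To correct it, I will run the standard Kubi\'s--Solecki perturbation. First build a second space $H'$ containing $v_0(H)$ isometrically and a copy of $\ell_\infty^n$ isometrically, with the two copies within $O(\delta_0)$ in operator norm. This is an amalgamation-type construction in the spirit of \Cref{lem:supnorm-amalgamation}: norm $v_0(H)\oplus\ell_\infty^n$ by the largest seminorm dominated by both norms and by $\delta_0^{-1}$ times the distance term. Then apply the Gurarij property to the inclusion $v_0(H)\subseteq H'$ with a smaller tolerance $\delta_1$. This yields $v_1\in\Emb^{\log}_{\delta_1}(H,\mathbb G)$ with $\|v_1-v_0\|\lesssim\delta_0$ and $v_1\varphi\approx f$.

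Iterating with $\sum\delta_k<\infty$ makes $(v_k)$ Cauchy in operator norm. Its limit $g$ is an exact isometric embedding with $\|g\varphi-f\|<\epsilon$. I expect this step, making the almost-isometric extension exactly isometric while staying close, to be the main obstacle. The delicate point is choosing the norm on $H'$ so that both pieces embed isometrically and still remain close.

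\emph{(The property implies $B\cong\mathbb G$.)} It suffices to verify \Cref{def:gurarij-space} for $B$ and then invoke uniqueness. Let $E\subseteq H$ be finite-dimensional, $u\in\Emb(E,B)$, and $\epsilon>0$.

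First, choose a finite $\delta$-net $(\psi_j)_{j<n}$ of the unit sphere of $H^*$. The map $\varphi x=(\psi_j(x))_j$ satisfies $(1-\delta)\|x\|\le\|\varphi x\|_\infty\le\|x\|$. After renorming $H$ by $\|\varphi\cdot\|_\infty$ (which is $\delta$-close to the original norm), $\varphi$ becomes an exact isometry. Similarly, $u$ becomes an isometry from $E$ with the new norm onto its image up to distortion $\delta$. I handle that by applying the hypothesis to $E_\delta=(E,\|\varphi\cdot\|_\infty)$ and the map $u$, which lies in $\Emb^{\log}_{O(\delta)}$. To make the hypothesis literally applicable, I replace $B$-side data by an exact isometry as follows. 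Because the hypothesis tolerates an error $\epsilon$, I instead apply it to the isometry $u':=u\circ\kappa$, where $\kappa:E_\delta\to E$ is the identity. Only $\|u'\|$ is off by a factor $e^{O(\delta)}$, so I normalize and absorb the discrepancy into the final error.

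This gives $g\in\Emb(\ell_\infty^n,B)$ with $\|g\varphi|_E-u\|<\eta$, with $\eta$ small in terms of $\delta$ and $\epsilon$. Next, fix a projection $\Pi:H\to E$ with $\|\Pi\|\le\dim E$ (Auerbach, \Cref{lem:auerbach}), and put $v=g\varphi+(u-g\varphi|_E)\Pi$. Then $v|_E=u$ exactly, and $\|v-g\varphi\|\le\eta\dim E$. Since $g\varphi$ has log-distortion $O(\delta)$ on $H$, taking $\delta,\eta$ small gives $v\in\Emb^{\log}_\epsilon(H,B)$. Hence $B$ is a separable Gurarij space, and $B\cong\mathbb G$ by uniqueness.
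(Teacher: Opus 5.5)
The paper does not prove this lemma; it only cites Lupini, so your argument has to stand on its own.

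\textbf{First implication.} This is the standard Kubi\'s--Solecki correction and it works. The amalgam norm you want is the largest seminorm on $v_0(H)\oplus H$ bounded by the given norms on the two summands and by $(e^{\delta_0}-1)\|h\|$ on the vectors $(v_0h,-h)$. The hypothesis $\ldis(v_0)<\delta_0$ is exactly what makes both summands embed isometrically, and then $\|v_1-v_0\|\le e^{\delta_1}(e^{\delta_0}-1)$.

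\textbf{Converse: where it breaks.} The argument fails at the step where you apply the hypothesis. Your $u'=u\circ\kappa$ is not an isometry of $E_\delta$. It only satisfies $\|x\|_{E_\delta}\le\|u'x\|\le(1-\delta)^{-1}\|x\|_{E_\delta}$, a two-sided distortion that in general no rescaling removes. The hypothesis, by contrast, requires \emph{exact} isometric embeddings $f$ and $\varphi$ of one and the same space. Such a $\varphi$ exists only when that space is polyhedral, so the hypothesis can only ever be applied to polyhedral subspaces of $B$, and $u(E)$ may well be $\ell_2^2$. What the argument needs is an isometric copy of some $\ell_\infty^m$ (or at least a polyhedral subspace) inside $B$ that nearly contains $u(E)$. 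Nothing in your argument produces one.

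\textbf{The gap cannot be closed for arbitrary $B$.} As literally stated, the converse is false. Let $B=\mathbb G\oplus_2\mathbb G$. Let $E$ be polyhedral with $\dim E\ge2$ and $f=(f_1,f_2)\in\Emb(E,B)$. On each facet of the unit ball of $E$, the convex function $\|f_1x\|^2$ equals the concave function $1-\|f_2x\|^2$, so it is affine there. A nonnegative convex function with affine square is constant, and the unit sphere of $E$ is connected. Hence $\|f_1x\|=t\|x\|$ and $\|f_2x\|=s\|x\|$ with $t^2+s^2=1$; this is trivial when $\dim E=1$. Now apply the property of $\mathbb G$ (your first implication) to $f_1/t,\varphi$ and $f_2/s,\varphi$, taking any $g_i$ if $t$ or $s$ vanishes, and put $g=(tg_1,sg_2)$. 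This shows that $B$ has the property. On the other hand, every isometric copy of $\ell_\infty^N$ in $B$ with $N\ge2$ is then ``diagonal'': $\|w_1\|=t\|w\|$ for all its vectors $w$, with $t$ fixed. So no such copy comes close to both $(x,0)$ and $(0,y)$. In $\mathbb G$, a Lindenstrauss space, every finite-dimensional subspace is nearly contained in such a copy, so $B\not\cong\mathbb G$.

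\textbf{A correct version.} An extra hypothesis is needed, for example that $B$ is the closure of an increasing union of isometric copies of spaces $\ell_\infty^m$. This holds in \Cref{thm:gurarij-presentations}. Under it, your plan goes through once the faulty step is replaced as follows.
\begin{enumerate}
\item Choose $f_0:E\to U\cong\ell_\infty^m$ inside $B$ with $\|f_0-u\|<\eta$.
\item Apply \Cref{lem:supnorm-amalgamation} to $\varphi\restr E$ and $f_0/(1+\eta)$ to obtain $i\in\Emb(\ell_\infty^n,\ell_\infty^{n+m})$ and $j\in\Emb(\ell_\infty^m,\ell_\infty^{n+m})$.
\item Apply the hypothesis to the polyhedral space $U$, with the inclusion $U\subseteq B$ and $j$, to obtain $g_1$, and set $g=g_1i$.
\end{enumerate}
Then $\|g\varphi\restr E-u\|=O(\delta+\eta)$. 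Your projection correction and the uniqueness of $\mathbb G$ then finish the proof.
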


\begin{theorem}
\label{thm:gurarij-presentations}
The real Gurarij Banach space $\mathbb G$ has a computable
presentation.
\end{theorem}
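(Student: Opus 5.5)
The plan is to apply \Cref{thm:effective-exact-fraisse} to the family $\mathcal A=(A_n)_{n\ge1}$ with $A_n=\ell_\infty^n(\R)$ in standard coordinates (shifting indices so that $n$ starts at $0$ if needed), and then to identify $\operatorname{Flim}(\mathcal A)$ with $\mathbb G$ using \Cref{lem:gurarij-supnorm-characterization}. Condition \ref{item:effective-objects} is immediate, since the maximum norm is computable on rational vectors. Condition \ref{item:abstract-jep} holds because $x\mapsto(x,0)$ and $y\mapsto(0,y)$ embed $\ell_\infty^m$ and $\ell_\infty^n$ isometrically into $\ell_\infty^{m+n}$.

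For \ref{item:abstract-nap}, given $f\in\Emb(A_r,A_p)$ and $g\in\Emb(A_r,A_q)$, apply \Cref{lem:supnorm-amalgamation} with $E=A_r$, $a=f$, $b=g$ and $\delta=0$. It yields $i,j$ with $if=jg$ exactly, so any $\eta>0$ works.

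The step needing real care is \ref{item:effective-arrows}: we need a computable dense family of isometric embeddings $\ell_\infty^m\to\ell_\infty^n$. I would take the rational matrices $T$ with $\|T\|_{\infty\to\infty}=1$ such that the matrix contains, up to sign, the $m\times m$ identity as a row-subset, i.e.\ for each $k<m$ some row equals $\pm e_k^{\mathrm T}$. Such matrices are exactly the rational ones that are isometric embeddings of this form. The operator norm from $\ell_\infty^m$ to $\ell_\infty^n$ is the maximal row $\ell_1$-sum, so membership is decidable for rational matrices and the family is computably enumerable.

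Density needs an argument: an isometric embedding $T$ has rows $t_j$ with $\|t_j\|_1\le1$. For each extreme point $\pm e_k$ of the dual unit ball, isometry gives $\max_j|t_j(x)|=\|x\|_\infty$ at every vertex $x$ of the cube. I plan to show that for each $k$ there is a row norming $e_k$-type functionals. The cleaner route is that every coordinate functional $e_k^*$ on $\ell_\infty^m$ must be attained as a weak$^*$ limit, hence for finite $n$ exactly, of $\pm T^*e_j^*$ by a Krein--Milman/extreme point argument: $T^*$ maps the dual ball $\ell_1^n$-ball onto the $\ell_1^m$-ball, so every extreme point $\pm e_k^*$ is $T^*$ of an extreme point $\pm e_j^*$, i.e.\ some row equals $\pm e_k^{\mathrm T}$. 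Given this, approximate the other rows by rational rows of $\ell_1$-norm at most $1$, for instance by scaling a rational approximant by a factor slightly below $1$. This produces nearby rational isometric embeddings, which gives density. I expect this surjectivity-of-$T^*$ step to be the main technical point, though it follows from Hahn--Banach.

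The theorem then gives a computable presentation of $B=\operatorname{Flim}(\mathcal A)$, and it remains to check the criterion of \Cref{lem:gurarij-supnorm-characterization}. Fix $f\in\Emb(E,B)$, $\varphi\in\Emb(E,\ell_\infty^n)$ and $\epsilon>0$. Since $\bigcup_s\iota_sU_s$ is dense, perturb $f$ slightly, using an Auerbach basis and \Cref{lem:auerbach}, to a map $f'$ into some $\iota_rU_r$ with $(1-\delta)\|x\|\le\|f'x\|\le\|x\|$, with $\delta$ small after rescaling. Apply \Cref{lem:supnorm-amalgamation} to $a=\iota_r^{-1}f'$ and $b=\varphi$. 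This gives $i:U_r\to\ell_\infty^{m+n}$ and $j:\ell_\infty^n\to\ell_\infty^{m+n}$ with $\|ia-j\varphi\|\le\delta$. Now use \ref{item:fraisse-extension} to get $i'\in\Emb(\ell_\infty^{m+n},U_s)$ with $\|i'i-u_r^s\|<\epsilon'$, and set $g=\iota_si'j$. Then
\[
\|g\varphi-f\|\le\|f-f'\|+\|\iota_su_r^sa-\iota_si'ia\|+\|\iota_si'(ia-j\varphi)\|,
\]
which is small. Hence $B\cong\mathbb G$, and $\mathbb G$ has a computable presentation.
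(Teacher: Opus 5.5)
Your proposal is correct and follows the paper's proof essentially step for step. It uses the same family $\ell_\infty^n(\R)$ and the same decidable characterization of rational isometric embeddings (row $\ell_1$-sums at most $1$, plus a signed coordinate row for each column), with density obtained by approximating the remaining rows. It also uses \Cref{lem:supnorm-amalgamation} with $\delta=0$ for \ref{item:abstract-nap}, and the same perturb, rescale, amalgamate, extend verification of \Cref{lem:gurarij-supnorm-characterization}. The only deviation is that you obtain the signed coordinate rows from surjectivity of $T^*$ onto the $\ell_1^m$-ball and an extreme-point argument; the paper gets them more directly from $\|Te_k\|_\infty=1$ together with the row-sum bound.
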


\begin{proof}
Apply
\Cref{thm:effective-exact-fraisse} to the explicitly indexed family $\mathcal A=(A_n)_{n\in\N}$ where 
$A_n=\ell_\infty^{n+1}(\R)$.
We use the standard coordinate basis of each $A_n$.  Its norm is
uniformly computable because it is the maximum of finitely many
absolute values.  This verifies \ref{item:effective-objects}.

We next construct the dense computable families of isometric
embeddings required by \ref{item:effective-arrows}.
Let $d,e\ge1$, and represent a linear map
$T:\ell_\infty^d(\R)\to\ell_\infty^e(\R)$ by an $e\times d$
matrix $(t_{ij})$.  Its operator norm is
$\|T\|=\max_{1\le i\le e}\sum_{j=1}^d|t_{ij}|$.
Consequently $T\in\Emb(\ell_\infty^d(\R),\ell_\infty^e(\R))$
if and only if both of the following conditions hold:
\begin{enumerate}[label=(\alph*)]
\item every row satisfies $\sum_{j=1}^d|t_{ij}|\le1$;
\item for every column $j$, some row has an entry equal to $1$ or $-1$
in column $j$ and has zero entries in all other columns.
\end{enumerate}
Indeed, if $T$ is isometric, then (a) follows from $\|T\|=1$.
The image of the $j$th coordinate unit vector has norm one,
so some entry in column $j$ equals $1$ or $-1$; (a) forces all
other entries in that row to vanish.  Conversely, (a) gives
$\|Tx\|_\infty\le\|x\|_\infty$, and the rows in (b) give
the reverse inequality for every $x$.

Conditions (a) and (b) are decidable for matrices with rational
entries.  Enumerating all rational matrices satisfying these
conditions, for all $d,e$, produces isometric embeddings whose
matrix entries are uniformly computable.

These embeddings are dense: keep one signed coordinate row as in (b)
for each column fixed, and approximate each remaining row by rational
points of the unit ball of $\ell_1^d$.  The resulting matrices still
satisfy (a) and (b) and converge in operator norm.  Thus their uniform
enumeration verifies \ref{item:effective-arrows}.

For \ref{item:abstract-jep}, two spaces
$\ell_\infty^d(\R)$ and $\ell_\infty^e(\R)$ embed into
$\ell_\infty^{d+e}(\R)$ by $x\mapsto(x,0)$ and
$y\mapsto(0,y)$.  For \ref{item:abstract-nap}, apply
\Cref{lem:supnorm-amalgamation}
with $\delta=0$. 
All four hypotheses of \Cref{thm:effective-exact-fraisse} hold.
It follows that there is a computable $\mathcal A$-Fra\"{\i}ss\'e
sequence $(U_r,u_r)$ whose completed limit $B$ has a computable
presentation.

It remains to verify that $B$ is $\mathbb G$.
Identify each $U_r$ with its canonical image in $B$; these are
increasing subspaces with dense union.  We check the property in
\Cref{lem:gurarij-supnorm-characterization}.
Fix a finite-dimensional $E$, $f\in\Emb(E,B)$,
$\varphi\in\Emb(E,\ell_\infty^n(\R))$, and
$\epsilon>0$.  Choose
$0<\eta<\min\{1/2,\epsilon/5\}$.
Approximating the images under $f$ of a basis of $E$ in the dense
union gives some $r$ and a linear map
$f_0:E\to U_r$ such that $\|f_0-f\|<\eta$.
Set $a=f_0/(1+\eta)$.  Then $a$ is a contraction,
$\|a-f\|<2\eta$, and $\|ax\|\ge(1-2\eta)\|x\|_E$ for all $x \in E$.
Write $U_r=\ell_\infty^m(\R)$.
By \Cref{lem:supnorm-amalgamation},
applied to $a,\varphi$ with $\delta=2\eta$, there are $i\in\Emb(U_r,\ell_\infty^{m+n}(\R))$ and $j\in\Emb(\ell_\infty^n(\R),\ell_\infty^{m+n}(\R))$
satisfying $\|ia-j\varphi\|\le2\eta$.
Property \ref{item:fraisse-extension} of the sequence, applied to
$i\in\Emb(U_r,\ell_\infty^{m+n}(\R))$, gives $s>r$ and
$h\in\Emb(\ell_\infty^{m+n}(\R),U_s)$ such that $\|hi-u_r^s\|<\eta$.
Regard $g=hj$ as an element of $\Emb(\ell_\infty^n(\R),B)$.
Since $u_r^s$ is the inclusion under our identifications and
$\|a\|\le1$, we obtain
\[
 \begin{aligned}
 \|g\varphi-f\|
 \le\|h(j\varphi-ia)\|
      +\|(hi-u_r^s)a\|+\|a-f\|
 <2\eta+\eta+2\eta
 =5\eta<\epsilon.
 \end{aligned}
\]
Thus $B$ satisfies \Cref{lem:gurarij-supnorm-characterization}
and is linearly isometric to $\mathbb G$.
Its computable presentation gives the desired presentation of
$\mathbb G$.
\end{proof}

\section*{Acknowledgments}The authors were supported by Focused Research Grants of Natural Science Foundation of Jiangsu Province No. BK20243060. 
The authors used OpenAI’s ChatGPT 5.6 to assist with developing the proofs. More specifically, it first suggested a possible connection between McNicholl’s conjecture and the work of Ferenczi, López-Abad, Mbombo, and Todor\v{c}evi\'c \cite{FLAMT}. When subsequently asked whether the relevant arguments could be effectivized, it provided the initial feasibility analysis that led to the approach developed in this paper. All
proofs in this paper have been independently verified by the authors without the use
of AI tools. The authors take full responsibility for the final content of the paper.

\end{document}